\def\eqref#1{equation~\ref{#1}}
\def\1{\bm{1}}
\DeclareMathAlphabet{\mathsfit}{\encodingdefault}{\sfdefault}{m}{sl}
\SetMathAlphabet{\mathsfit}{bold}{\encodingdefault}{\sfdefault}{bx}{n}
\newcommand{\E}{\mathbb{E}}
\DeclareMathOperator*{\argmin}{arg\,min}
\newcommand{\bE}{{\mathbb{E}}}
\newcommand{\bR}{{\mathbb{R}}}
\newcommand{\cD}{{\mathcal{D}}}
\newcommand{\cZ}{{\mathcal{Z}}}
\newcommand{\cO}{{\mathcal{O}}}
\newcommand{\cA}{{\mathcal{A}}}
\newcommand{\norm}[1]{\left\lVert#1\right\rVert}
\newtheorem{example}{Example}		%
\newtheorem{theorem}{Theorem}		%
\newtheorem{corollary}{Corollary}		%
\newtheorem{definition}{Definition}
\newtheorem{question}{Question}
\newtheorem{remark}{Remark}
\newcommand{\redvargrad}{\Tilde{\nabla}}
\newcommand{\prev}{\text{prev}}
\title{Efficient Continual Finite-Sum Minimization}
\author{Ioannis Mavrothalassitis\thanks{Equal contribution}, Stratis Skoulakis\footnotemark[1], Leello Tadesse Dadi, Volkan Cevher \\
LIONS, École Polytechnique Fédérale de Lausanne\\
\{ioannis.mavrothalassitis, efstratios.skoulakis, leello.dadi, volkan.cevher\}@epfl.ch\\
}
\begin{document}

\maketitle

\begin{abstract}
Given a sequence of functions $f_1,\ldots,f_n$ with $f_i:\mathcal{D}\mapsto \mathbb{R}$, finite-sum minimization seeks a point ${x}^\star \in \mathcal{D}$ minimizing $\sum_{j=1}^nf_j(x)/n$. In this work, we propose a key twist into the finite-sum minimization, dubbed as \textit{continual finite-sum minimization}, that asks for a sequence of points ${x}_1^\star,\ldots,{x}_n^\star \in \mathcal{D}$ such that each ${x}^\star_i \in \mathcal{D}$ minimizes the prefix-sum $\sum_{j=1}^if_j(x)/i$. Assuming that each prefix-sum is strongly convex, we develop a first-order continual stochastic variance reduction gradient method ({\small \sc{CSVRG}}) producing an $\epsilon$-optimal sequence with $\Tilde{\mathcal{O}}(n/\epsilon^{1/3} + 1/\sqrt{\epsilon})$ overall \textit{first-order oracles} (FO). An FO corresponds to the computation of a single gradient $\nabla f_j(x)$ at a given $x \in \mathcal{D}$ for some $j \in [n]$. Our approach significantly improves upon the $\mathcal{O}(n/\epsilon)$ FOs that $\mathrm{StochasticGradientDescent}$ requires and the $\mathcal{O}(n^2 \log (1/\epsilon))$ FOs that state-of-the-art variance reduction methods such as $\mathrm{Katyusha}$ require. We also prove that there is no natural first-order method with $\mathcal{O}\left(n/\epsilon^\alpha\right)$ gradient complexity for $\alpha < 1/4$, establishing that the first-order complexity of our method is nearly tight.
\end{abstract}

\section{Introduction}
Given $n$ data points describing a desired input and output relationship of a model, a cornerstone task in supervised machine learning (ML) is selecting the model's parameters enforcing data fidelity. In optimization terms, this task corresponds to minimizing an objective with the \textit{finite-sum structure}.

\textbf{Finite-sum minimization:}
Given a sequence of functions $f_1,\ldots,f_n$ with $f_i:\mathcal{D}\mapsto \mathbb{R}$, let $x^\star \in \arg \min_x g(x) := \sum_{i=1}^n f_i(x)/n$. Given an accuracy target $\epsilon>0$, \textit{finite-sum minimization} seeks an approximate solution $\hat{x}$, such that: 
\begin{equation} \label{eq:finite_sum} 
    g(\hat{x}) - g(x^\star) \le \epsilon
\end{equation}
we call such an $\hat{x}$ an $\epsilon$-accurate solution. If $\hat{x}$ is random then \cref{eq:finite_sum} takes the form $\E\left[g(\hat{x})\right] - g(x^\star) \le \epsilon$.

In contemporary machine learning applications, $n$ can be in the order of billions, which makes it clear that methods tackling finite-sum minimization must efficiently scale with $n$ and the accuracy $\epsilon >0$.

First-order methods have been the standard choice for tackling
\cref{eq:finite_sum} due to their efficiency and practical behavior. The complexity of a first-order method is captured through the notion of overall \textit{first-order oracles} (FOs) where a first-order oracle corresponds to the computation of a single gradient $\nabla f_i(x)$ for some $i \in [n]$ and a point $x \in \mathcal{D}$. In case each function $f_i$ is strongly convex, $\mathrm{Stochastic}$ $\mathrm{Gradient}$ $\mathrm{Descent}$ requires $\mathcal{O}(1/\epsilon)$ FOs \cite{robbins1951stochastic,kiefer1952stochastic} so as to solve~\cref{eq:finite_sum}. Over the last years, the so-called variance reduction methods are able to solve strongly convex finite-sum problems with $\mathcal{O}(n\log(1/\epsilon))$ FOs \cite{NLST17,johnson2013accelerating,X14,DBL14,RSB12,allen2017katyusha}.

\underline{Continual Finite-Sum Minimization:}
In many modern applications new data constantly arrives over time. Hence it is important that a model is constantly updated so as to perform equally well both on the past and the new data~\cite{castro2018end,RT18incremental,H22}. Unfortunately updating the model only with respect to the new data can lead to a vast deterioration of its performance over the past data. The latter phenomenon is known in the context of \textit{continual learning} as \textit{catastrophic forgetting}~\cite{castro2018end,GMDCB13,kirkpatrick2017overcoming,mccloskey1989catastrophic}. At the same time, retraining the model from scratch using both past and new data comes with a huge computational burden.
Motivated by the above, we study a twist of~\cref{eq:finite_sum}, called \textit{continual finite-sum minimization}.

\textbf{Continual Finite-Sum Minimization:} Given a sequence of functions $f_1,\ldots,f_n$ with $f_i:\mathcal{D} \mapsto \mathbb{R}$, let $x_i^\star \in \arg\min_{x\in \cD} g_i(x) := \sum_{j=1}^i f_j(x)/i$ for all $i \in [n]$. Given an accuracy target $\epsilon>0$, \textit{continual finite-sum minimization} seeks a sequence of approximate solutions $\hat{x}_1,\ldots,\hat{x}_n \in \mathcal{D}$, such that:
\begin{equation}\label{d:instance-optimal}
g_i(\hat{x}_i) \leq g_i(x_i^\star) + \epsilon~~~\text{for each }i\in [n]
\end{equation}
We call such a sequence an $\epsilon$-optimal sequence. If $x_i^\star$ is random then \cref{d:instance-optimal} takes the form $\bE[g_i(\hat{x}_i)]-g_i(x_i^\star)\leq \epsilon$
\begin{remark}
Throughout the paper we assume that each function $f_i(\cdot)$ is strongly convex, smooth and that $\mathcal{D}$ is convex and bounded. See Section~\ref{s:prelims} for the exact definitions.
\end{remark}
Notice that $f_i(\cdot)$ can capture a new data point meaning that~\cref{d:instance-optimal} guarantees that, the model (parameterized by $\hat{x}_i$) is well-fitted in all of the data seen so far for each stage $i \in [n]$. As already discussed, since $n$ can be in the order of millions, it is important to design first-order methods
for continual finite-sum minimization that efficiently scale with $n$ and $\epsilon > 0$ with respect to overall FOs.

\vspace{-2mm}

A first attempt for tackling~\cref{d:instance-optimal} is via existing first-order methods for finite-sum minimization. As discussed above $\mathrm{StochasticGradientDescent}$ can guarantee accuracy $\epsilon > 0$ for~\cref{eq:finite_sum} using $\mathcal{O}(1 / \epsilon)$. As a result, at each stage $i \in [n]$ one could perform $\mathcal{O}(1 / \epsilon)$ stochastic gradient decent steps to guarantee accuracy $\epsilon >0$. However the latter approach would require overall $\mathcal{O}(n/\epsilon)$ FOs. On the other hand one could use a VR methods to select $\hat{x}_i \in \mathcal{D}$ at each stage $i \in [n]$. As already mentioned such methods require $\mathcal{O}(i\log(1/\epsilon))$ FOs to guarantee accuracy $\epsilon>0$ at stage $i \in [n]$. Thus the naive use of a VR method such as 
$\mathrm{SVRG}$~\cite{johnson2013accelerating,X14}, $\mathrm{SAGA}$~\cite{DBL14}, $\mathrm{SARAH}$~\cite{NLST17} and $\mathrm{Katyusha}$~\cite{allen2017katyusha} would require overall $\mathcal{O}(n^2 \log(1/\epsilon))$ FOs.

We remark that for large values of $n$ and even mediocre accuracy $\epsilon>0$ both $\mathcal{O}(n/\epsilon)$ and especially $\mathcal{O}(n^2 \log (1/\epsilon))$ are prohibitely large. As a result, the following question arises:
\begin{question}\label{q:1}
Are there first-order methods for continual finite-sum minimization whose required number of FOs scales more efficiently with respect to $n$ and $\epsilon>0$?
\end{question}

\textbf{Our Contribution} The main contribution of this work is the design of a first-order method for Continual finite-sum minimization, called $\mathrm{CSVRG}$, with overall $\tilde{\mathcal{O}}\left(n/\epsilon^{1/3} + \log n /\sqrt{\epsilon}\right)$ FO complexity. The latter significantly improves upon the $\mathcal{O}(n/\epsilon)$ FO complexity of $\mathrm{SGD}$ and the $\mathcal{O}(n^2 \log(1/\epsilon))$ complexity of variance reduction methods. As a naive baseline, we also present a \textit{sparse}  version of $\mathrm{SGD}$ tailored~\cref{d:instance-optimal} with $\mathcal{O}(\log n /\epsilon^2)$ FO complexity. The latter improves on the $n$ dependence of $\mathrm{SGD}$ with the cost of a worse dependence on $1/\epsilon$. 
 
Since variance-reduction methods such as $\mathrm{Katyusha}$ are able to achieve $\mathcal{O}(n^2 \log (1/\epsilon))$ FO complexity. A natural question is whether there exists a first-order method maintaining the $\log (1/\epsilon)$ dependence with a \textit{subquadratic dependence} on $n$, $\mathcal{O}\left(n^{2 -\alpha} \log (1/\epsilon)\right)$. We prove that the latter goal cannot be achieved for a wide class of first-order methods that we call \textit{natural first-order methods}. More precisely, we establish that there is no natural first-order method for~\cref{d:instance-optimal} requiring $\mathcal{O}(n^{2-\alpha} \log(1/\epsilon))$ FOs for any $\alpha >0$. We also establish that there is no natural first-order method for~\cref{d:instance-optimal} requiring $\tilde{\mathcal{O}}\left(n/ \epsilon^{\alpha}\right)$  FOs for any $\alpha < 1/4$. The latter lower bound implies that the $\tilde{\mathcal{O}}\left(n/ \epsilon^{1/3}\right)$ FO complexity of $\mathrm{CSVRG}$ is close to being tight. Table~\ref{t:2} summarizes our results.  
\begin{table}[!ht]
    \begin{center}
    \begin{tabular}{|c||c|}
        \hline
        \multicolumn{2}{|c|}{$\epsilon$-accuracy at each stage $i \in [n]$}\\
        \hline
         Method & Number of FOs \\
         \hline        
        $\mathrm{StochasticGradientDescent}$ & $\mathcal{O}(\frac{1}{\mu}\textcolor{blue}{n/\epsilon})$ \\
        $\mathrm{SVRG}/\mathrm{SAGA}/\mathrm{SARAH}$ & $\mathcal{O}\left(\textcolor{blue}{n^2 \log(1/\epsilon)}+ \frac{L}{\mu} \cdot \textcolor{blue}{n \log(1/\epsilon)} \right) $  \\
        \smallskip
         $\mathrm{Katyusha}$ & $\mathcal{O}\left(\textcolor{blue}{n^2 \log(1/\epsilon)} +   \sqrt{\frac{L}{\mu}} \cdot \textcolor{blue}{n^{3/2} \log(1/\epsilon)} \right)$  \\

        $\mathrm{SGD}$-$\mathrm{sparse}$ (\textcolor{red}{naive baseline}) & $\mathcal{O}\left(\frac{|\mathcal{D}|G^3}{\mu}\textcolor{blue}{1/\epsilon^2}\right)$ \\
         
         $\mathrm{CSVRG}$ (\textcolor{red}{this work}) & $\cO\left(\frac{L^{2/3}G^{2/3}}{\mu} \cdot \textcolor{blue}{(n \log n) / \epsilon^{1/3}} +\frac{L^2G}{\mu^{5/2}} \cdot \textcolor{blue}{\log n/\sqrt{\epsilon}} \right)$ \\

        Lower Bound (\textcolor{red}{this work}) & \textcolor{red}{$\Omega\left(n^{2} \log (1/\epsilon)\right)$} \\
        Lower Bound (\textcolor{red}{this work}) & \textcolor{red}{$\Omega\left(n /\epsilon^{1/4}\right)$} \\
         \hline
    \end{tabular}
    \end{center}
    \caption{Number of FOs for the strongly convex case for an $\epsilon$-accurate solution for each stage $i \in [n]$. }
    \label{t:2}
\end{table}

\textbf{Comparing the methods for $\bm{\Theta(1/n)}$ accuracy} 
According to the level of accuracy $\epsilon>0$ that~\cref{d:instance-optimal} requires, different methods might be more efficient than others. We remark that for the accuracy regime of $\epsilon = \mathcal{O}(1/n)$ all previous first-order methods require $\mathcal{O}(n^2)$ FOs while $\mathrm{CSVRG}$ requires only $\tilde{\mathcal{O}}(n^{4/3})$ FOs! 

We remark that the accuracy regime $\epsilon = \mathcal{O}(1/n)$ for~\cref{d:instance-optimal} is of particular interest and has been the initial motivation of this work. The reason for the latter lies on the generalization bounds in the strongly convex case. More precisely, the statistical error of empirical risk minimization in the strongly convex case is $\Theta(1/n)$ \cite{SS10}. Thus the accuracy 
 of the respective finite-sum problem should be selected as $\epsilon = \mathcal{O}(1/n)$ so as to match the unavoidable statistical error (see \cite{BB07} for the respective discussion). For accuracy $\epsilon = \Theta(1/n)$, a very interesting open question is bridging the gap between the $\mathcal{O}(n^{4/3})$ FO complexity of $\mathrm{CSVRG}$ and the $\Omega(n^{5/4})$ FO complexity that our lower bound suggests (setting $\epsilon =1/n$ and $\alpha = 1/4$).

\textbf{Our Techniques} 
As already discussed the naive use of a VR method at each stage $i\in [n]$ results in overall $\mathcal{O}(n^2 \log(1/\epsilon))$ FOs. A first approach in order to alleviate the latter phenomenon is to sparsely use such a method across the $n$ stages and in most intermediate stages compute $\hat{x}_i \in \mathcal{D}$ via $\mathcal{O}(1)$ FOs using a stochastic gradient estimator. However both the \textit{sparsity level} for using a VR method as well as the stochastic gradient estimator are crucial design choices for establishing accuracy $\epsilon >0$ at each stage $i \in [n]$.

Our first-order method ($\mathrm{CSVRG}$) adopts a variance-reduction pipeline along the above lines. $\mathrm{CSVRG}$ (Algorithm~\ref{alg:1}) maintains a direction $\redvargrad_{i}$ that at stage $i\in [n]$ tries to approximate $\sum_{k=1}^i \nabla f_k(\hat{x}_{i-1})/i$. Since computing the latter direction at each stage $i \in [n]$ would lead to $\Omega(n^2)$ FOs, $\mathrm{CSVRG}$

\begin{itemize}
\item Sets $\redvargrad_{i} \leftarrow \sum_{k=1}^i \nabla f_k(\hat{x}_{i})/i$ only in a very sparse sub-sequence of stages ($i$ FOs)
\item Sets $\redvargrad_{i} \leftarrow (1-\frac{1}{i})\redvargrad_{i} + \frac{1}{i}\nabla f_i(\hat{x}_{\prev})$ in most stages ($1$ FO)
\end{itemize}
In order to output an $\epsilon$-accurate point $\hat{x}_i \in \mathcal{D}$ at stage $i \in [n]$, we initialize an internal subroutine (Algorithm~\ref{alg:frequent}) to the previous output $\hat{x}_{i-1} \in \mathcal{D}$ and perform $T_i$ stochastic gradient descent steps using the following novel gradient estimator
\[\nabla_i^t \leftarrow \left(1 - \frac{1}{i}\right)\left(\nabla f_{u_t}(x_i^t)-\nabla f_{\textcolor{blue}{u_t}}(\textcolor{red}{\hat{x}_{\prev}})+\redvargrad_{i-1}\right) + \frac{1}{i}\nabla f_i(x_i^t)\]
where index $u_t$ is selected uniformly at random in $[i-1]$ and $\prev$ is the latest stage $\ell \leq i-1$ at which $\redvargrad_\ell =\sum_{k=1}^\ell \nabla f_k(\hat{x}_{\ell})/\ell$. Notice that the estimator $\nabla_i^t$ requires only $3$ FOs. Combining all the latter, we establish that by an appropriate parametrization on the sparsity of the sub-sequence at which $\redvargrad_\ell =\sum_{k=1}^\ell \nabla f_k(\hat{x}_{\ell})/\ell$ and an appropriate selection of iteration $T_i$, our first-order method $\mathrm{CSVRG}$ requires overall $\mathcal{O}(n\log n/\epsilon^{1/3} )$ FOs.

\subsection{Related Work}

Apart from the close relation of our work with the long line of research on variance reduction methods (see also Appendix~\ref{app:relatedwork} for a detailed discussion), the motivation of our work also relates with the line of research in incremental learning, the goal of which is adapting a model to new information, data or tasks without forgetting the ones that it was trained on earlier. The phenomenon of losing sight of old information is called \textit{catastrophic forgetting}~\cite{castro2018end,GMDCB13,kirkpatrick2017overcoming,mccloskey1989catastrophic,mermillod2013stability} and it is one of the main challenges of incremental learning. There have been three main empirical approaches to tackle catastrophic forgetting, regularization based \cite{nguyen2017variational}, memory based \cite{tulving1985many}, architecture based \cite{yoon2017lifelong}, as well as combination of the above \cite{sodhani2020toward}.%

\section{Preliminaries}\label{s:prelims}
In this section we introduce some basic definitions and notation.  We denote with $\mathrm{Unif}(1,\ldots,n)$ the uniform distribution over $\{1,\ldots,n\}$ and $[n]:=\{1,\ldots,n\}$.
\begin{definition}[Strong Convexity]
    A differentiable function $f:\mathcal{D} \mapsto \bR$ is $\mu$-strongly convex in $\mathcal{D}$ if and only if for all $x,y \in \mathcal{D}$
    \begin{equation}\label{eq:StrongConvex}
    f(x) \geq f(y) + \nabla f(y)^\top(x-y) + \frac{\mu}{2}\norm{x-y}^2
\end{equation}
\end{definition}
In Problem~\cref{d:instance-optimal} we make the assumption that $\mathcal{D}$ is convex and compact. We denote with $|\mathcal{D}|$ the diameter of $\mathcal{D}$, $|\mathcal{D}| = \max_{x,y \in \mathcal{D}}\norm{x-y}_2$. The compactness of $\mathcal{D}$ also provides us with the property that each $f_i:\mathcal{D} \mapsto \mathbb{R}$ is also $G$-Lipschitz and $L$-smooth. More precisely,
\begin{itemize}
\item $|f(x)-f(y)|\leq G \cdot \norm{x-y}$~~ ($G$-Lipschtiz)
\item $\norm{\nabla f(x)-\nabla f(y)}\leq L \cdot \norm{x-y}$~~~ ($L$-smooth)
\end{itemize}
where $G = \max_{x \in \mathcal{D}}\norm{\nabla f(x)}_2$ and 
$L = \max_{x \in \mathcal{D}}\norm{\nabla^2 f(x)}_2$. To simplify notation we denote as $g_i(x)$ the \textit{prefix-function} of stage $i\in [n]$,
\begin{equation}\label{eq:prefix}
g_i(x):= \sum_{j=1}^i f_j(x)/i
\end{equation}
We denote with $x_i^\star \in \mathcal{D}$ the minimizer of the function $g_i(x)$, $x_i^\star := \argmin_{x \in \mathcal{D}} g_i(x)$.
We denote the projection of $x\in \mathbb{R}^d$ to the set $\mathcal{D}$ as $\Pi_{\mathcal{D}}\left(x\right):= \argmin_{z\in \mathcal{D} }\norm{x -z}_2$. For a set $S \subseteq \mathbb{R}^d$, we denote $\Pi_{\mathcal{D}}(S) := \{ \Pi_{\mathcal{D}}(x)~:~\text{for all } x \in S\}$. Throughout the paper we assume that each function $f_i(\cdot)$ in continual finite-sum minimization~\cref{d:instance-optimal} is $\mu$-strongly convex, $L$-smooth and $G$-Lipschitz. 

\section{Our Results}
We first state our main result establishing the existence of a first-order method for continual finite-sum minimization~\cref{d:instance-optimal} with $\tilde{\mathcal{O}}\left(n/\epsilon^{1/3} + \log n/\sqrt{\epsilon}\right)$ FO complexity.
\begin{restatable}{theorem}{static}\label{thm:1}
There exists a first-order method, $\mathrm{CSVRG}$ (Algorithm~\ref{alg:1}), for continual finite-sum minimization~\cref{d:instance-optimal} with
$\cO\left(\frac{L^{2/3}G^{2/3}}{\mu} \cdot \frac{n \log n}{\epsilon^{1/3}} +\frac{L^2G}{\mu^{5/2}} \cdot \frac{\log n}{\sqrt{\epsilon}} \right)$ FO complexity.
\end{restatable}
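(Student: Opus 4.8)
The plan is to recognize each inner call of Algorithm~\ref{alg:frequent} as projected $\mathrm{SGD}$ on the $\mu$-strongly convex objective $g_i$ driven by an \emph{unbiased, variance‑reduced} estimator, to bound the per-stage iteration budget $T_i$ in terms of how ``stale'' the snapshot point is, and then to add up the cost of the sparse full-gradient snapshots, optimizing the snapshot spacing at the end. The first step is the algebraic invariant that, when $\ell=\prev$ is the most recent snapshot stage, $\redvargrad_{i-1}=\tfrac1{i-1}\sum_{k=1}^{i-1}\nabla f_k(\hat x_\ell)$ \emph{exactly}: this holds by induction, since $\redvargrad_\ell=\tfrac1\ell\sum_{k\le\ell}\nabla f_k(\hat x_\ell)$ by definition of a snapshot and the cheap update $\redvargrad_i\leftarrow(1-\tfrac1i)\redvargrad_{i-1}+\tfrac1i\nabla f_i(\hat x_\ell)$ reproduces the running average of $\nabla f_k(\hat x_\ell)$ over $k\le i$. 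Substituting into $\nabla_i^t$ and taking $\mathbb{E}[\cdot\mid x_i^t]$ over $u_t\sim\mathrm{Unif}([i-1])$ yields $\mathbb{E}[\nabla_i^t\mid x_i^t]=\tfrac{i-1}{i}\cdot\tfrac1{i-1}\sum_{k\le i-1}\nabla f_k(x_i^t)+\tfrac1i\nabla f_i(x_i^t)=\nabla g_i(x_i^t)$, so the estimator is exactly unbiased.

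Next I would bound the conditional variance by the usual $\mathrm{SVRG}$ argument. Writing $\tilde x:=\hat x_\ell$, dropping the deterministic $\tfrac1i\nabla f_i(x_i^t)$ part (whose contribution is $\mathcal{O}(G^2/i^2)$), and using the standard consequence of convexity and $L$-smoothness of each $f_k$, namely $\mathbb{E}_u\|\nabla f_u(y)-\nabla f_u(x_{i-1}^\star)\|^2\le 2L\big(g_{i-1}(y)-g_{i-1}(x_{i-1}^\star)\big)$, one gets
\[
\mathbb{E}\big[\|\nabla_i^t-\nabla g_i(x_i^t)\|^2\mid x_i^t\big]\ \le\ \mathcal{O}(L)\big(g_{i-1}(x_i^t)-g_{i-1}(x_{i-1}^\star)\big)\ +\ \mathcal{O}(L)\,\sigma_i\ +\ \mathcal{O}(G^2/i^2),
\]
where $\sigma_i:=g_{i-1}(\hat x_\ell)-g_{i-1}(x_{i-1}^\star)$ is the staleness of the snapshot point. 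To control $\sigma_i$ I need two elementary drift lemmas: (i) after the harmless normalization $\min_{\mathcal{D}}f_i=0$ one has $|g_i(x)-g_{i-1}(x)|\le \tfrac{G|\mathcal{D}|}{i}$ for all $x\in\mathcal{D}$, hence $\sup_{\mathcal{D}}|g_{i-1}-g_\ell|\le G|\mathcal{D}|\log(i/\ell)$; and (ii) $\|x_i^\star-x_{i-1}^\star\|\le \tfrac{2G}{\mu i}$, obtained by comparing the $\mu$-strongly convex $g_i$ at $x_i^\star$ and $x_{i-1}^\star$, using that $x_{i-1}^\star$ minimizes $g_{i-1}$ and $f_i$ is $G$-Lipschitz. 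Combining (i)--(ii) with $\epsilon$-optimality of $\hat x_\ell$ for $g_\ell$ gives $\sigma_i\le\epsilon+\mathcal{O}(G|\mathcal{D}|\log(i/\ell))$ and, via $L$-smoothness of $g_{i-1}$, $\sigma_i\le \mathcal{O}\!\big(\tfrac{L\epsilon}{\mu}+\tfrac{LG^2}{\mu^2}\log^2(i/\ell)\big)$; with snapshots at multiplicative spacing $1+\beta$ this is $\sigma_i=\mathcal{O}\!\big(\tfrac{L\epsilon}{\mu}+\tfrac{LG^2\beta^2}{\mu^2}\big)$, while for the earliest stages (where we snapshot at \emph{every} stage) $\sigma_i\le\epsilon$.

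I would then feed the variance bound into the textbook strongly-convex $\mathrm{SGD}$ recursion (expected-smoothness form): with step $\alpha_i\le \Theta(1/L)$, warm start $x_i^0=\hat x_{i-1}$, and $T_i$ steps,
\[
\mathbb{E}\big[g_i(x_i^{T_i})\big]-g_i(x_i^\star)\ \le\ \tfrac L2(1-\mu\alpha_i)^{T_i}\,\|\hat x_{i-1}-x_i^\star\|^2\ +\ \mathcal{O}\!\Big(\tfrac{\alpha_i L^2}{\mu}\,\sigma_i+\tfrac{\alpha_i L}{\mu i^2}G^2\Big).
\]
Since $\|\hat x_{i-1}-x_i^\star\|^2\le \mathcal{O}\!\big(\tfrac{\epsilon}{\mu}+\tfrac{G^2}{\mu^2 i^2}\big)$ by Step~2, choosing $\alpha_i=\Theta\!\big(\min\{\tfrac1L,\tfrac{\mu\epsilon}{L^2\sigma_i}\}\big)$ drives the floor below $\epsilon/2$, and then $T_i=\mathcal{O}\!\big(\tfrac1{\mu\alpha_i}\log\tfrac{L\|\hat x_{i-1}-x_i^\star\|^2}{\epsilon}\big)$ makes $\hat x_i:=x_i^{T_i}$ an $\epsilon$-optimal point for $g_i$, which closes the induction across stages. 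For the accounting I split at the threshold $i_0=\Theta\!\big(\tfrac{G}{\mu}\sqrt{L/\epsilon}\big)$: for $i<i_0$ I snapshot at every stage, so $\sigma_i\le\epsilon$, the step size $\alpha_i=\Theta(\mu/L^2)$ is $\epsilon$-independent, $T_i=\mathcal{O}(\tfrac{L^2}{\mu^2}\log(1/\epsilon))$, and since $\sum_{i<i_0}\log\|\hat x_{i-1}-x_i^\star\|^2\asymp i_0$ the total inner FOs here are $\tilde{\mathcal{O}}(1/\sqrt\epsilon)$ while the snapshot FOs are $\mathcal{O}(i_0^2)=\mathcal{O}(1/\epsilon)$, which is absorbed into the $n/\epsilon^{1/3}$ term. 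For $i\ge i_0$ I place snapshots at multiplicative spacing $1+\beta$: there are $\mathcal{O}(\tfrac{\log n}{\beta})$ of them, summing (geometrically) to $\mathcal{O}(n/\beta)$ FOs, while $\sigma_i=\mathcal{O}(\tfrac{LG^2\beta^2}{\mu^2})$ forces $\alpha_i=\Theta(\tfrac{\mu^3\epsilon}{L^3G^2\beta^2})$ and $T_i=\tilde{\mathcal{O}}(\tfrac{L^3G^2\beta^2}{\mu^4\epsilon})$, so $\sum_i 3T_i=\tilde{\mathcal{O}}(\tfrac{nL^3G^2\beta^2}{\mu^4\epsilon})$. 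Balancing $n/\beta$ against $n\beta^2/\epsilon$ gives $\beta=\Theta\!\big((\mu^4\epsilon/(L^3G^2))^{1/3}\big)$ and overall cost $\tilde{\mathcal{O}}\!\big(n/\epsilon^{1/3}+1/\sqrt\epsilon\big)$, matching the stated bound up to the displayed $\mathrm{poly}(L,\mu,G)$ and $\log n$ factors.

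The main obstacle is that the $\mathrm{SVRG}$ control variate is anchored at a \emph{stale} past output $\hat x_\ell$ rather than at the current iterate, so the estimator's variance never contracts to $0$; the entire complexity hinges on showing that this staleness is only $\mathcal{O}(\text{spacing}^2)$ in function value (lemma (ii) above), which is exactly what lets the snapshot cost $n/\beta$ and the inner cost $\propto n\beta^2/\epsilon$ balance at the $\epsilon^{1/3}$ exponent rather than $\epsilon^{1/2}$. The second delicate point is the bookkeeping for the $\Theta(\epsilon^{-1/2})$ earliest stages, where the warm start $\hat x_{i-1}$ is $\omega(\epsilon)$-suboptimal for $g_i$ because $g_i-g_{i-1}=\Theta(1/i)$; handling those stages with an $\epsilon$-independent step (enabled by snapshotting every stage there) without overshooting the budget is what both produces and caps the additive $\tilde{\mathcal{O}}(1/\sqrt\epsilon)$ term.
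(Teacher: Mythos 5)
Your overall route is the same as the paper's: the running-average invariant for $\redvargrad$ (Lemma~\ref{lemm:induction_var}), unbiasedness of $\nabla_i^t$ (Lemma~\ref{l:unbias}), a variance bound driven by the ``staleness'' of the anchor $\hat x_{\prev}$ together with a drift bound $\|x_{i}^\star-x_{i-1}^\star\|\le \mathcal{O}(G/(\mu i))$ (Lemmas~\ref{l:bounded_variance} and~\ref{lemm:optdist}), and then an accounting that balances sparse full-gradient snapshots against the inner SGD iterations. Two design choices differ from the paper and one of them creates a genuine gap.

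First, the inner solver. The paper's Algorithm~\ref{alg:frequent} uses a decaying step size $\gamma_t = 4/(\mu(t+\beta))$ together with the weighted average $\hat x_i = \mathcal{Z}^{-1}\sum(t+\beta-1)x_i^{t+1}$; this gives a $1/T_i^2$ contraction of the initialization term $\|x_i^0-x_i^\star\|^2$ (Lemma~\ref{lemm:convergebound}), so that $T_i\propto 1/(i\sqrt{\epsilon})$ already suffices to damp the $\mathcal{O}(1/i^2)$ warm-start error, and $\sum_i 1/(i\sqrt{\epsilon})=\mathcal{O}(\log n/\sqrt{\epsilon})$ gives the additive term in the theorem directly. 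You instead use a constant step $\alpha_i$ with geometric contraction $(1-\mu\alpha_i)^{T_i}$. That is workable in principle but it \emph{couples} the noise floor to the contraction rate: to make the noise floor $\mathcal{O}(\epsilon)$ you must either shrink $\alpha_i$ (costing extra iterations to damp the warm start) or shrink $\sigma_i$ by snapshotting more often (costing extra FOs in Step~5/12). This coupling does not exist in the paper's averaged scheme, and it is the origin of the problem below.

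Second, and this is the real gap: to keep $\alpha_i=\Theta(\mu/L^2)$ (so that early-stage warm starts of size $\|\hat x_{i-1}-x_i^\star\|^2=\Theta(1/i^2)$ can be damped quickly), you snapshot at \emph{every} stage up to $i_0=\Theta\bigl(\tfrac{G}{\mu}\sqrt{L/\epsilon}\bigr)$, which costs $\sum_{i<i_0} i = \Theta(i_0^2) = \Theta(1/\epsilon)$ FOs. You assert this is ``absorbed into the $n/\epsilon^{1/3}$ term,'' but $1/\epsilon \le n/\epsilon^{1/3}$ requires $n\gtrsim 1/\epsilon^{2/3}$. The theorem makes no such assumption: for $1/\epsilon^{1/3}\lesssim n\lesssim 1/\epsilon^{2/3}$ your snapshot cost $\Theta(\min(n,1/\sqrt\epsilon)^2)$ strictly exceeds both $n\log n/\epsilon^{1/3}$ and $\log n/\sqrt\epsilon$, so your argument does not establish the stated bound. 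The paper avoids this because its single condition $i-\prev\ge\alpha i$ (with $\alpha=\Theta(\epsilon^{1/3})$) only forces consecutive snapshots up to $i\approx 1/\alpha=\Theta(1/\epsilon^{1/3})$, keeping the early snapshot cost at $\mathcal{O}(1/\epsilon^{2/3})\le \mathcal{O}(n/\alpha)$, while its $1/T_i^2$ contraction handles the warm start beyond $1/\alpha$ without inflating $T_i$. To repair your version you would need to set the consecutive-snapshot threshold at $\Theta(1/\epsilon^{1/3})$ (not $1/\sqrt\epsilon$) and then carefully account for the intermediate stages $i\in(1/\epsilon^{1/3},1/\sqrt\epsilon)$ where the warm start is still $\omega(\epsilon)$-bad under the smaller step size $\alpha_i=\Theta(\mu\epsilon^{1/3}/L^2)$; this can be made to close with an extra $\log(1/\epsilon)$ factor, but as written the argument is incorrect.

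A smaller point: your variance bound is stated in terms of function-value suboptimality anchored at $x_{i-1}^\star$ (via cocoercivity), whereas the paper's Lemma~\ref{l:bounded_variance} is in terms of squared distances to $x_i^\star$ and $x_{\prev}^\star$. The two are interchangeable via strong convexity/smoothness, but to plug your bound into a recursion on $g_i$ you still need to translate $g_{i-1}(x_i^t)-g_{i-1}(x_{i-1}^\star)$ into something controlled by $g_i(x_i^t)-g_i(x_i^\star)$; this requires the drift lemmas again and should be spelled out.
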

As already mentioned, our method admits $\tilde{\mathcal{O}}\left(n/\epsilon^{1/3}+1/\sqrt{\epsilon}\right)$ first-order complexity, significantly improving upon the $\mathcal{O}(n/\epsilon)$ of $\mathrm{SGD}$ and the  $\mathcal{O}\left(n^2 \log(1/\epsilon)\right)$ of VR methods. The description of our method (Algorithm~\ref{alg:1}) as well as the main steps for establishing Theorem~\ref{thm:1} are presented in Section~\ref{section:convergence-results}.

In Theorem~\ref{t:side} we present the formal guarantees of the naive baseline result that is a \textit{sparse variant} of $\mathrm{SGD}$ for continual finite-sum minimization~\cref{d:instance-optimal} with $\mathcal{O}(1/\epsilon^2)$ FO complexity. 
\begin{restatable}{theorem}{static2}\label{t:side}
There exists a first-order method, $\mathrm{SGD}$-$\mathrm{sparse}$ (Algorithm~\ref{alg:sparseSGD}), for continual finite-sum minimization~\cref{d:instance-optimal} with
$\cO\left(\frac{G^{3}|\mathcal{D}|\log n}{\mu \epsilon^2} \right)$ FO complexity.
\end{restatable}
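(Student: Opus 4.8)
The plan is to realize $\mathrm{SGD}$-$\mathrm{sparse}$ (Algorithm~\ref{alg:sparseSGD}) as a \emph{block-reuse} scheme and to prove correctness through a decomposition that exploits the additive structure of prefix sums. I would partition the stages $1,\dots,n$ into consecutive blocks with endpoints $r_0 = 1$ and $r_{k+1} = \lceil (1+\rho) r_k\rceil$, for the geometric ratio $\rho := \epsilon/(2G|\mathcal{D}|)$ (if $\epsilon \ge 2G|\mathcal{D}|$ every point of $\mathcal{D}$ is already $\epsilon$-optimal at every stage by $G$-Lipschitzness, so one may assume $\rho<1$). At the start $r_k$ of each block I run projected $\mathrm{SGD}$ on the prefix-function $g_{r_k}$ — each step samples $j\sim\mathrm{Unif}(1,\dots,r_k)$, queries the single gradient $\nabla f_j$ (one FO), and projects onto $\mathcal{D}$ — for $T := O\!\big(G^2/(\mu\epsilon)\big)$ steps, obtaining a point $\hat x_{r_k}$; for every stage $i$ inside the block I output $\hat x_i := \hat x_{r_k}$, spending no further FOs.

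For the accuracy at block starts I would invoke the standard guarantee for projected $\mathrm{SGD}$ on a $\mu$-strongly convex objective with stochastic gradients bounded by $G$ (which applies here because $\E_{j}[\nabla f_j(x)] = \nabla g_{r_k}(x)$ and $\|\nabla f_j(x)\|\le G$ on $\mathcal{D}$): with a $1/(\mu t)$-type step size and iterate averaging, $\E[g_{r_k}(\hat x_{r_k})] - g_{r_k}(x_{r_k}^\star) = O\!\big(G^2/(\mu T)\big)$, so $T = O\!\big(G^2/(\mu\epsilon)\big)$ makes this at most $\epsilon/2$.

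The crux is showing that reusing $\hat x_{r_k}$ across its block costs only another $\epsilon/2$. For $i$ in the block write $r = r_k$; from $i\,g_i = r\,g_r + \sum_{j=r+1}^{i} f_j$ I get
\[
i\big(g_i(\hat x_r) - g_i(x_i^\star)\big) = r\big(g_r(\hat x_r) - g_r(x_i^\star)\big) + \sum_{j=r+1}^{i}\big(f_j(\hat x_r) - f_j(x_i^\star)\big).
\]
Because $x_r^\star$ minimizes $g_r$, the first term is at most $r\big(g_r(\hat x_r) - g_r(x_r^\star)\big)$, whose expectation is $\le r\epsilon/2$; the key point is that this keeps the $\mathrm{SGD}$ error in function-value form and weighted by $r/i\le 1$, so it is not re-inflated. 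Each summand of the second term is at most $G\|\hat x_r - x_i^\star\|\le G|\mathcal{D}|$ by $G$-Lipschitzness and the diameter bound, and there are $i-r \le r_{k+1}-1-r \le \rho r$ of them. Dividing by $i\ge r$ gives $\E[g_i(\hat x_i)] - g_i(x_i^\star) \le \epsilon/2 + \rho G|\mathcal{D}| = \epsilon$, as required — and this is exactly the constraint that fixes the ratio $\rho\asymp\epsilon/(G|\mathcal{D}|)$.

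For the complexity, since $r_{k+1}\ge(1+\rho)r_k$ at every step, the number of blocks needed to reach $n$ is $O\!\big(\log n/\log(1+\rho)\big) = O(\log n/\rho) = O\!\big(G|\mathcal{D}|\log n/\epsilon\big)$; multiplying by the $O\!\big(G^2/(\mu\epsilon)\big)$ FOs per block yields the claimed $O\!\big(G^3|\mathcal{D}|\log n/(\mu\epsilon^2)\big)$. I expect the only genuine obstacle to be the third step: the tempting route of bounding $g_i(\hat x_i)-g_i(x_i^\star)$ by $G\|\hat x_i - x_i^\star\|$ and then controlling the displacement of the minimizers fails, since $\|x_r^\star - x_i^\star\|$ can be $\Theta(G/\mu)$ over a constant-factor block; the additive prefix identity above is precisely what circumvents this and keeps the estimate tight, and everything else is routine bookkeeping about the block schedule.
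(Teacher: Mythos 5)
Your proposal is correct and takes essentially the same route as the paper: same geometric block schedule, same $O(G^2/(\mu\epsilon))$-step SGD call at each block start, and the same prefix-sum decomposition $i\,g_i = r\,g_r + \sum_{j=r+1}^{i} f_j$ combined with the observation that $x_r^\star$ minimizes $g_r$ and that each added summand is bounded by $G|\mathcal{D}|$. (Your $\rho=\epsilon/(2G|\mathcal{D}|)$ is in fact the correct value; the paper's theorem statement writes $\alpha=1/(2|\mathcal{D}|G)$, evidently a typo since its own final inequality $\epsilon/2+|\mathcal{D}|G\alpha=\epsilon$ and its block-count bound both require the $\epsilon$-dependent choice you made.)
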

Algorithm~\ref{alg:sparseSGD} as well as the proof of Theorem~\ref{t:side} are presented in Appendix~\ref{app:lowaccuracy}.

\textbf{Lower Bounds for Natural First-Order Methods}
In view of the above, the question that naturally arises is whether there exists a method for continual finite-sum minimization~\cref{d:instance-optimal} with $O(n^{2-\alpha} \log (1/\epsilon))$ FO complexity. Unfortunately the latter goal cannot be met for a wide class of first-order methods, that we call \textit{natural}. The formal definition of \textit{natural first-order method} is presented in Section~\ref{s:natural}. To this end, we remark that methods such that  $\mathrm{GD}$, $\mathrm{SGD}$, $\mathrm{SVRG}$, $\mathrm{SAGA}$, $\mathrm{SARAH}$, $\mathrm{Katyusha}$ and $\mathrm{CSVRG}$ (Algorithm~\ref{alg:1}) lie in the above class.

\begin{restatable}{theorem}{imp}\label{c:imp}
For any $\alpha >0$, there is no natural first-order method for Problem~\cref{d:instance-optimal} with $\mathcal{O}\left(n^{2-\alpha} \log(1/\epsilon)\right)$ FO complexity. Moreover for any $\alpha <1/4$, there is no natural first-order method for continual finite-sum minimization~\cref{d:instance-optimal} with $\mathcal{O}\left(n/ \epsilon^\alpha\right)$ FO complexity.
\end{restatable}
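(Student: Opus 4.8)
The plan is to construct an explicit hard instance $f_1,\dots,f_n$ with the property that the minimizers $x_i^\star$ of consecutive prefix-sums $g_i$ differ substantially, so that a natural first-order method — which may only move in directions spanned by gradients it has actually queried — is forced to spend many FOs \emph{per stage} to locate each new $x_i^\star$. I would take each $f_i$ to be a simple strongly convex quadratic, say $f_i(x) = \tfrac{\mu}{2}\norm{x - v_i}^2$ on a bounded convex $\mathcal{D} \subset \R^d$, with the centers $v_i$ chosen (e.g.\ along coordinate axes, or as a suitably scaled orthogonal/near-orthogonal family) so that $x_i^\star = \tfrac1i\sum_{j\le i} v_j$ moves by a controlled amount $\norm{x_{i}^\star - x_{i-1}^\star} = \Theta(\norm{v_i}/i)$ in a \emph{fresh direction} at each stage. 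The dimension $d$ is taken comparable to $n$ so that these fresh directions are genuinely new and cannot be anticipated from earlier queries.

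The core of the argument is an information-theoretic / span argument in the style of classical first-order lower bounds (Nesterov; Woodworth–Srebro), adapted to the continual setting via the definition of \emph{natural} method from Section~\ref{s:natural}: I would argue that after any $t$ first-order queries at stage $i$, the running iterate lies in $x_{i-1} + \mathrm{span}$ of the queried gradients, and that to bring $g_i$ within $\epsilon$ of $g_i(x_i^\star)$ one must resolve the component of $x_i^\star - x_{i-1}$ along the fresh direction $v_i$, which (because the functions are uncorrelated across coordinates) requires at least one gradient query touching the relevant block. Summing the per-stage costs and carefully tracking how the "resolution accuracy" trades off against $\epsilon$ yields the two bounds: the crude span-growth argument already forces $\Omega(i)$ queries at a constant fraction of the stages, giving $\Omega(n^2)$ total and hence ruling out $\mathcal{O}(n^{2-\alpha}\log(1/\epsilon))$; a finer accounting that balances the magnitude $\Theta(1/i)$ of the inter-stage displacement against the target accuracy $\epsilon$ — essentially, stages with $i \lesssim \epsilon^{-1/2}$ can be handled cheaply but the remaining $\Theta(n)$ stages each still need roughly $\epsilon^{-1/4}$-many queries to shave the residual error — produces the $\Omega(n/\epsilon^{1/4})$ bound.

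More concretely, for the second bound I would split $[n]$ into a "warm-up" prefix and a "steady-state" suffix, and on the suffix show that at stage $i$ the function $g_i$ has condition structure forcing any iterate reachable with $T_i$ queries to incur error $\Omega(\norm{x_i^\star - x_{i-1}^\star}^2 \cdot \rho^{T_i})$ for a contraction factor $\rho$ bounded away from $0$ that depends on how many fresh coordinates remain unresolved; choosing the $v_i$ so that $\norm{x_i^\star - x_{i-1}^\star}^2 = \Theta(1/i^{2})$ and demanding this be $\le \epsilon$ forces $T_i = \Omega(\log(1/(i^2\epsilon)))$ in the regime where the quadratic term dominates, but in the complementary regime the fresh-direction obstruction forces the stronger $T_i = \Omega(\epsilon^{-1/4})$; summing over the $\Theta(n)$ steady-state stages gives the claim. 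The adversary is chosen after fixing the method's query pattern is \emph{not} needed — the instance is oblivious — which is what makes it apply to the whole natural class uniformly.

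The main obstacle I anticipate is making the per-stage lower bound genuinely \emph{cumulative} rather than just $n$ times a single-stage bound: one must rule out the possibility that a clever natural method amortizes work across stages by reusing gradients computed at earlier stages (this is exactly what $\mathrm{CSVRG}$ does to beat the naive $\Omega(n^2)$ for large $\epsilon$). I would handle this by designing the instance so that the gradient $\nabla f_i$ carries information \emph{only} about its own fresh direction $v_i$, so that gradients queried before $f_i$ "arrives" are useless for resolving $x_i^\star - x_{i-1}^\star$; formalizing "useless" requires the orthogonality of the $v_i$'s together with the natural-method restriction that iterates stay in the span of queried gradients, and the delicate point is choosing the scaling of the $v_i$ so that the displacement sizes, the smoothness $L$, the strong-convexity $\mu$, and the accuracy $\epsilon$ line up to yield exactly the exponent $1/4$ and not something weaker.
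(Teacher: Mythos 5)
The core construction you propose cannot give any superlinear lower bound, because separable quadratics $f_j(x) = \tfrac{\mu}{2}\norm{x - v_j}^2$ with fresh orthogonal centers are trivially solvable by a natural method at $O(1)$ FOs per stage. The prefix minimizer obeys the recursion $x_i^\star = \tfrac{i-1}{i}x_{i-1}^\star + \tfrac{1}{i}v_i$, and a single query $\nabla f_i(\hat{x}_{i-1}) = \mu(\hat{x}_{i-1} - v_i)$ lets the algorithm form $\hat{x}_i := \hat{x}_{i-1} - \tfrac{1}{\mu i}\nabla f_i(\hat{x}_{i-1}) = \tfrac{i-1}{i}\hat{x}_{i-1} + \tfrac{1}{i}v_i$, which is a linear combination of a previous point and a queried gradient (hence a legal natural-method step under Definition~\ref{d:nat_method}), contracts the error by $\tfrac{i-1}{i}$, and — starting from $\hat{x}_0 = 0$ — outputs $\hat{x}_i = x_i^\star$ \emph{exactly} at every stage with $n$ total FOs. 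So the ``fresh direction'' obstruction is worth only one FO per stage, not $\Omega(i)$: once $\hat{x}_{i-1}$ and $v_i$ are in the span, $x_i^\star$ is in the span, and no past coordinate ever needs to be revisited. This defeats both your $\Omega(n^2)$ sketch and the $\Omega(n/\epsilon^{1/4})$ sketch, since neither has a valid source of a super-constant per-stage cost. You correctly flag the amortization problem in your last paragraph, but your remedy — making $\nabla f_i$ carry information \emph{only} about its own direction — is precisely what makes the instance easy: it isolates stage $i$ so that one fresh query finishes it.

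The paper's argument is built around the opposite design principle. It works in dimension $d=2$ with a \emph{randomized} instance: at a stage $i$ where the method necessarily spends at most $i/2$ FOs (such a stage exists whenever the total is $o(n^2)$), a hidden index $k \sim \mathrm{Unif}\{1,\dots,i-1\}$ is chosen and $f_k$ alone carries a coupling term $([x]_1-[x]_2)^2$; meanwhile $f_i$ pushes the first coordinate away from the origin, and every other $f_\ell$ has gradient with vanishing second component along $[x]_2 = 0$. A natural method starting at $(0,0)$ stays at $(0,0)$ through stage $i-1$, and at stage $i$ can only escape the line $[x]_2=0$ by happening to query $f_k$ — a needle in a haystack of $i-1$ indistinguishable past functions. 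With $\le i/2$ queries it misses $k$ with probability $\ge 1/2$ and incurs suboptimality $\Omega(1/n^4)$, which, substituting $\epsilon = \Theta(1/n^4)$ (resp.\ $\epsilon = \Theta(1/n^5)$), rules out $O(n/\epsilon^\alpha)$ for $\alpha < 1/4$ (resp.\ $O(n^{2-\alpha}\log(1/\epsilon))$). The ingredient your proposal is missing is exactly this: the information the method needs about the \emph{current} stage's optimum must be buried in an unknown \emph{past} function, so that locating it forces $\Omega(i)$ work at that stage rather than $O(1)$.
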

Due to space limitations the proof of Theorem~\ref{c:imp} is presented in Appendix~\ref{app:lowerbound}.
\begin{remark}
The $\tilde{\mathcal{O}}(n/\epsilon^{1/3} +1/\sqrt{\epsilon})$ FO complexity of CSVRG is close to the $\Omega(n/\epsilon^{1/4})$ lower bound.
\end{remark}

\subsection{Natural First-Order methods}\label{s:natural}
In this section we provide the formal definition of natural first-order method for which our lower bound stated in Theorem~\ref{c:imp} applies. Before doing so we present the some definitions characterizing general first-order methods.

At an intuitive level a \textit{first-order method} for continual finite-sum minimization~\cref{d:instance-optimal} determines $\hat{x}_i \in \mathcal{D}$ by only requiring first-order access to the functions $f_1,\ldots,f_i$. Specifically at each stage $i \in [n]$, a first-order method makes queries of the form $\{\nabla f_j(x) \text{ for } j\leq i\}$ and uses this information to determine $\hat{x}_i \in \mathcal{D}$. The latter intuitive description is formalized in Definition~\ref{def:complex}.

\begin{definition}\label{def:complex}
Let $\cA$ be a first-order method for continual finite-sum minimization~\cref{d:instance-optimal}. At each stage $i \in [n]$,
\begin{itemize}
\item $\hat{x}_0 \in \mathcal{D}$ denotes the initial point of $\mathcal{A}$ and $\hat{x}_i \in \mathcal{D}$ denotes the output of $\mathcal{A}$ at stage $i \in [n]$.

\item $x_i^t \in \mathcal{D}$ denotes the intermediate point of $\mathcal{A}$ at round $t \geq 1$ of stage $i \in [n]$ and $T_i \in \mathbb{N}$ the number of iterations during stage $i \in [n]$.

\item At each round $t \in [T_i]$ of stage $i\in [n]$, $\mathcal{A}$ performs a set of first-order oracles denotes $Q_i^t$. 
\begin{itemize}
\item Each first-order oracle $q\in Q^t_i$ admits the form $q = (q_{\mathrm{value}},q_{\mathrm{index}})$ where $q_{\mathrm{value}} \in \mathcal{D}$ denotes the queried point and $q_{\mathrm{index}} \leq i$ denotes the index of the queried function.
\item $\mathcal{A}$ computes the gradients 
$\{\nabla f_{q_{\mathrm{index}}}(q_{\mathrm{value}}) \text{ for all }q \in Q_i^t\}$ and uses this information to determine $x_i^{t+1} \in \mathcal{D}$.
\end{itemize}
\end{itemize}
The FO complexity of $\mathcal{A}$ is thus $\sum_{i \in [n]}\sum_{t=1}^{T_i}|Q_i^t|$.
\end{definition}

\begin{example}
Gradient Descent at stage $i \in [n]$, sets  $~x_i^0 
\leftarrow \hat{x}_{i-1}$ and for each round $t \in [T_i]$ peforms $x_i^t \leftarrow \Pi_{\mathcal{D}}\left[ x_i^{t-1} - \gamma \sum_{j=i}^i f_j(x_i^{t-1})/i\right]$. Finally it sets $\hat{x}_i \leftarrow x_i^{T_i}$. Thus in terms of Definition~\ref{def:complex} the first-order oracles of $\mathrm{GD}$ at round $t \in T_i$ are $Q_i^t = \{ \left(x_i^{t-1},1\right),\ldots, \left(x_i^{t-1},i\right)\}$. 
\end{example}
In Definition~\ref{d:nat_method} we formally define the notion of a \textit{natural} first-order method. In simple terms a first-order method is called \textit{natural} if during each stage $i\in [n]$ it only computes FOs of the form $\nabla f_j(x)$ where $x \in \mathcal{D}$ is previously generated point and $j \leq i$. 

\begin{definition}\label{d:nat_method}
A first-order method $\mathcal{A}$ is called natural iff at each step $t \in [T_i]$ of stage $i\in [n]$,
\begin{itemize}
\item For any query $q \in Q_i^t$, $q_{\mathrm{index}} \leq i $ and $q_{\mathrm{value}} \in \mathrm{PP}_i^{t-1}$ where $\mathrm{PP}_i^{t-1} := \left(\cup_{m \leq i-1} \cup_{\tau \leq T_m} x_m^\tau\right) \cup \left(\cup_{\tau \leq t-1} x_i^\tau \right) \cup 
\hat{x}_0$ ($\mathrm{PP}_i^{t-1}$ stands for previous points)

\item $x_i^t \in \Pi_{\mathcal{D}}(S)$ where $S$ is the linear span of $\left( \cup_{q \in Q_i^t} \nabla f_{q_{\mathrm{index}}}(q_\mathrm{value})\right) \cup \mathrm{PP}_i^{t-1}$.

\item $\hat{x}_i \in \Pi_{\mathcal{D}}(S)$ where $S$ is the linear span of $\mathrm{PP}_i^{T_i}$.
\end{itemize}
\end{definition}

\begin{remark}
Definition~\ref{d:nat_method} also captures randomized first-order methods such as $\mathrm{SGD}$ by considering $Q_i^t$ being a random set. 
\end{remark}
Natural first-order methods is the straightforward extension of \textit{linear-span methods}~\cite{B15convex,N14} in the context of continual finite-sum minimization~\cref{d:instance-optimal}. Linear-span methods require that for any first-order oracle $\nabla f(x)$ computed at iteration $t$, $x \in \{x_0, x_1,\ldots,x_{t-1}\}$. Moreover $x_t$ is required to lie in the linear span of the previous points and the computed gradients. Linear-span methods capture all natural first-order optimization methods and have been used to establish the well-known $\Omega(1/\sqrt{\epsilon})$ and $\Omega \left (\sqrt{L/\mu}\log (1/\epsilon)\right)$ respectively for the convex and strongly convex case (see Section 3.5 in \cite{B15convex}).

\section{CSVRG and Convergence Results}\label{section:convergence-results}
In this section we present our first-order method $\mathrm{CSVRG}$ that is able to achieve the guarantees established in Theorem~\ref{thm:1}. $\mathrm{CSVRG}$ is formally described in Algorithm~\ref{alg:1}.

\begin{algorithm}[ht]
    \caption{$\mathrm{CSVRG}$}
    \label{alg:1}
\begin{algorithmic}[1]    

   \State  $\hat{x}_0 \in \mathcal{D}$, $\prev \leftarrow 0$, $update \leftarrow false$\;

   \State $\hat{x}_1 \leftarrow \mathrm{GradientDescent}(\hat{x}_0)$, $\redvargrad_1 \leftarrow \nabla f_1(\hat{x}_1)$\;
\smallskip
    \For {each stage $i = 2,\dots,n$}
         \smallskip
         \If{$i - \prev \geq \alpha \cdot i$}
             \smallskip
             \State $\redvargrad_{i-1} \leftarrow \frac{1}{i-1}\sum_{j=1}^{i-1}\nabla f_j(\hat{x}_{i-1})$ \Comment{Compute  $\nabla g_{i-1}(\hat{x}_{i-1})$ with $i-1$ $\mathrm{FO}$s}
             \smallskip
             \State $\prev \leftarrow i-1$
            \smallskip
            \State $update \leftarrow true$
            \smallskip
         \EndIf
         \smallskip
         \State $T_i \leftarrow \cO\left(\frac{L^2G}{\mu^{5/2}i\sqrt{\epsilon}} + \frac{L^2G^2\alpha^2}{\mu\epsilon} + \frac{L^2}{\mu^2}\right)$
         \Comment{Number of iterations of Algorithm~\ref{alg:frequent} at stage $i \in [n]$}
         \smallskip
        \State  $\hat{x}_i\leftarrow \mathrm{FUM}(\prev,\redvargrad_{i-1},T_i)$ \Comment{Selection of $\hat{x}_{i} \in \mathcal{D}$ by Algorithm~\ref{alg:frequent} } 
        \smallskip
        \If{$update$}
            \smallskip
            \State $\redvargrad_i \leftarrow \frac{1}{i}\sum_{j=1}^{i}\nabla f_j(\hat{x}_i)$ \Comment{Compute  $\nabla g_{i}(\hat{x}_{i})$ with $i$ $\mathrm{FO}$s}
            \smallskip
            \State $\prev \leftarrow i$
            \smallskip
            \State $update \leftarrow false$

        \Else
        \State    $\redvargrad_i \leftarrow \left(1-\frac{1}{i}\right)\redvargrad_{i-1} + \frac{1}{i}\nabla f_i(\hat{x}_{\prev})$\Comment{Update $\redvargrad_i$ with $1$ $\mathrm{FO}$}
    \EndIf
    \EndFor
    \end{algorithmic}
\end{algorithm}

\begin{algorithm}[ht]
    \caption{\textbf{F}requent \textbf{U}pdate \textbf{M}ethod (\textbf{$\mathrm{FUM}$})}
    \label{alg:frequent}
\begin{algorithmic}[1]

 \State    $\beta \leftarrow \frac{72 L^2}{\mu^2}$, $\cZ \leftarrow \frac{T_i(T_i-1)}{2}+(T_i+1)(\beta-1)$
 \smallskip 
    \State $x_i^0 \leftarrow \hat{x}_{i-1}$ \Comment{Initialization at $\hat{x}_{i-1} \in \mathcal{D}$}
    \smallskip
    \For{each round $t:= 1,\dots,T_i$}
        \smallskip
        \State Select $u_t \sim \mathrm{Unif}(1,\dots,i-1)$
        \smallskip
        \State $\nabla_i^t \leftarrow \left(1 - \frac{1}{i}\right)\left(\nabla f_{u_t}(x_i^t)-\nabla f_{u_t}(\hat{x}_{\prev})+\redvargrad_{i-1}\right) + \frac{1}{i}\nabla f_i(x_i^t)$\Comment{$3$ FOs}
        \smallskip
        \State $\gamma_t \leftarrow 4/(\mu(t+\beta))$ \Comment{Step-size selection}
        \smallskip 
        \State $x_i^{t+1} \leftarrow \Pi_\cD \left(x_i^t - \gamma_t \nabla_i^t\right)$\Comment{Update $x_i^t \in \mathcal{D}$}
        \smallskip
    \EndFor
    \State \textbf{Output:}~~ $\hat{x}_i \leftarrow \frac{1}{\cZ}\sum_{s=0}^{T_i-1}(s+\beta-1)x_i^{t+1}$\Comment{Final output}
    \end{algorithmic}
\end{algorithm}

We first remark that the computation of the output $\hat{x}_i \in \mathcal{D}$ for each stage $i \in [n]$, is performed at Step~$10$ of Algorithm~\ref{alg:1} by calling Algorithm~\ref{alg:frequent}. Then Algorithm~\ref{alg:frequent} initializes $x_i^0 := \hat{x}_{i-1}$ and performs $T_i$ stochastic gradient steps using the estimator
\[\nabla_i^t \leftarrow \left(1 - \frac{1}{i}\right)\left(\nabla f_{u_t}(x_i^t)-\nabla f_{\textcolor{blue}{u_t}}(\textcolor{red}{\hat{x}_{\prev}})+\redvargrad_{i-1}\right) + \frac{1}{i}\nabla f_i(x_i^t)\]
To this end we remark that in order to compute $\nabla_i^t$ Algorithm~\ref{alg:frequent} requires just $3$ FOs. Before explaining the specific selection of the above estimator we start by explaining the role of $\redvargrad_{i}$. 

\textbf{The role of $\redvargrad_{i}$}: In order to keep the variance of the estimator $\nabla_i^t$ low, we would ideally want $\redvargrad_{i-1} := \nabla g_{i-1}(\hat{x}_{i-1})$. The problem is that computing  $\nabla g_i(\hat{x}_i)$ requires $i$ FOs meaning that performing such a computation at each stage $i\in [n]$, would directly result in $\Omega(n^2)$ FOs. In order to overcome the latter challenge, a full gradient $\nabla g_i(x)$ is computed very sparsely by Algorithm~\ref{alg:1} across the $n$ stages. Specifically, 
\begin{itemize}
\item In case $i - \prev \geq \alpha \cdot i$ then $\redvargrad_{i} := \sum_{j=1}^{i}\nabla f_j(\hat{x}_{i})/i$~~~~~~~~~~~~~~~~~~~~~~~~~~~~~~~~ ~~~~~~($i$ FOs)
\item In case $i - \prev < \alpha \cdot i$ then $\redvargrad_{i} :=        \left(1-\frac{1}{i}\right)\redvargrad_{i-1} + \frac{1}{i}\nabla f_i(\hat{x}_{\prev})$~~~~~~~~~~~~~~~~~~(1 FO)
\end{itemize}
Thus the parameter $\alpha >0$ controls the number of times Algorithm~\ref{alg:1} reaches Steps~$5$ and~$12$ that at stage $i \in [n]$ require $i$ $\mathrm{FO}$s. The latter is formally stated and established in Lemma~\ref{l:sparse}.
\begin{restatable}{lemma}{sparse}\label{l:sparse}
Over a sequence of $n$ stages, Algorithm~\ref{alg:1} reaches Step~$5$ and~$12$, $\lceil\log n / \alpha \rceil$ times. 
\end{restatable}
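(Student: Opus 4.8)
\textbf{Proof proposal for Lemma~\ref{l:sparse}.}

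The plan is to track the quantity $\prev$ across the stages and argue that, every time Algorithm~\ref{alg:1} reaches Step~$5$ (equivalently Step~$12$, since the two are executed in tandem within the same stage via the $update$ flag), the value of $\prev$ jumps by a multiplicative factor bounded below by $1/(1-\alpha)$. Since $\prev$ starts at a constant and is capped at $n$, only $\mathcal{O}(\log_{1/(1-\alpha)} n) = \mathcal{O}(\log n/\alpha)$ such jumps can occur.

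Concretely, suppose the condition $i - \prev \ge \alpha i$ at Step~$4$ triggers at some stage $i$, and let $p := \prev$ be its value just before this stage. The trigger condition rearranges to $p \le (1-\alpha) i$, i.e. $i \ge p/(1-\alpha)$. After Steps~$5$--$7$ we have $\prev = i-1$, and then (since $update$ is true) after Steps~$11$--$14$ we have $\prev = i$. So between the value of $\prev$ just before this triggering stage and just after it, $\prev$ increases from $p$ to $i \ge p/(1-\alpha)$. In the stages in between two consecutive triggers, $\prev$ only stays the same (the $else$ branch does not modify it), so the value of $\prev$ at consecutive triggers forms an increasing sequence $p_1 < p_2 < \cdots < p_K$ with $p_{k+1} \ge p_k/(1-\alpha)$ and $p_K \le n$. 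First I would pin down the base case: by Step~$2$, after stage $1$ we effectively have $\prev$ of order $1$ (one can take $p_1 \ge 1$), and by Step~$1$ it is initialized to $0$ but the first trigger necessarily sets it to a positive value. Then $p_K \ge p_1 (1-\alpha)^{-(K-1)} \ge (1-\alpha)^{-(K-1)}$, and combining with $p_K \le n$ gives $(1-\alpha)^{-(K-1)} \le n$, hence $K - 1 \le \log n / \log\frac{1}{1-\alpha} \le \log n/\alpha$ using $\log\frac{1}{1-\alpha} \ge \alpha$. Therefore $K \le \lceil \log n/\alpha \rceil$, which is exactly the claimed bound on the number of times Steps~$5$ and~$12$ are reached.

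The only mildly delicate point — and the place I would be most careful — is the bookkeeping around the $update$ flag and the off-by-one in $\prev$ (the temporary value $i-1$ set at Step~$6$ versus the final value $i$ set at Step~$13$): one must check that Steps~$5$ and~$12$ are reached in exactly the same set of stages, so that counting the triggers of the Step~$4$ condition counts both, and that the multiplicative-growth argument uses a consistent snapshot of $\prev$ (say, its value at the end of each stage). Once that is nailed down, the rest is the elementary geometric-growth counting above, together with the inequality $\ln\frac{1}{1-\alpha} \ge \alpha$ for $\alpha \in (0,1)$.
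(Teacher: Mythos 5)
Your proposal is correct and follows essentially the same route as the paper's proof: both arguments observe that the trigger condition $i - \prev \ge \alpha i$ rearranges to a geometric-growth condition $i \ge \prev/(1-\alpha)$, track a geometrically increasing quantity bounded by $n$ (you track the post-trigger value of $\prev$, the paper tracks the trigger stages $z_k$, but these coincide since $\prev$ is set to the current stage at Step~13), and conclude with the elementary bound $\log\tfrac{1}{1-\alpha}\ge\alpha$. The only cosmetic difference is a harmless off-by-one in the base case ($p_1\ge 1$ gives $K\le 1+\log n/\alpha$ rather than $K\le\log n/\alpha$ directly), which does not affect the claimed order.
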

Combining the latter with the fact that Algorithm~\ref{alg:frequent} requires $3T_i$ $\mathrm{FO}$s at each stage $i \in [n]$ and the fact that Step~$5$ and~$12$ require at most $n$ FOs, we get that
\begin{restatable}{corollary}{fos}\label{l:FOs}
Over a sequence of $n$ stages, Algorithm~\ref{alg:1} requires $3\sum_{i=1}^n T_i + 2n \lceil\log n/\alpha\rceil$ $\mathrm{FO}$s.
\end{restatable}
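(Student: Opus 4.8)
The plan is to tally the first-order oracles line by line in Algorithm~\ref{alg:1} over the $n$ stages, grouping the lines into three buckets: (i) the calls to $\mathrm{FUM}$ in Step~$10$, (ii) the full prefix-gradient computations in Steps~$5$ and~$12$, and (iii) the cheap updates (the warm start in Step~$2$ and the single-oracle recursive update in Step~$16$), and then to invoke Lemma~\ref{l:sparse} to control bucket~(ii).

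For bucket~(i), I would first note that one round of Algorithm~\ref{alg:frequent} evaluates the estimator $\nabla_i^t$ exactly once, and that this estimator is a fixed linear combination of the three freshly queried gradients $\nabla f_{u_t}(x_i^t)$, $\nabla f_{u_t}(\hat{x}_{\prev})$, and $\nabla f_i(x_i^t)$, while $\redvargrad_{i-1}$ is a quantity stored by the outer loop and costs no oracle — hence $3$ FOs per round and $3T_i$ FOs for the whole call at stage $i$. Summing over $i \in [n]$ gives $3\sum_{i=1}^n T_i$.

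For bucket~(ii), the point is that Steps~$5$ and~$12$ fire in lockstep within a stage: whenever the guard $i-\prev \geq \alpha i$ triggers at stage $i$, Step~$5$ runs and sets $update \leftarrow true$, so Step~$12$ is reached at that same stage; conversely Step~$12$ is entered only when $update$ was set, i.e.\ only when the guard has just fired. Thus each of Step~$5$ and Step~$12$ is executed exactly as many times as the guard triggers, which by Lemma~\ref{l:sparse} is $\lceil \log n/\alpha\rceil$. Each execution of Step~$5$ costs $i-1 \leq n$ FOs and each execution of Step~$12$ costs $i \leq n$ FOs, so bucket~(ii) contributes at most $2n\lceil \log n/\alpha\rceil$ FOs.

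Finally, for bucket~(iii), Step~$16$ costs a single FO and is entered at most $n$ times, and the warm start $\mathrm{GradientDescent}(\hat{x}_0)$ together with $\redvargrad_1 \leftarrow \nabla f_1(\hat{x}_1)$ can be charged to $T_1$ (run it for at most $T_1$ iterations). Since the schedule satisfies $T_i \geq L^2/\mu^2 \geq 1$, we have $\sum_{i=1}^n T_i \geq n$, so bucket~(iii) is absorbed into $3\sum_i T_i$. Adding the three buckets yields the claimed bound $3\sum_{i=1}^n T_i + 2n\lceil \log n/\alpha\rceil$. This is a counting corollary of Lemma~\ref{l:sparse}, so no serious obstacle arises; the one spot that needs care is the synchronization argument in bucket~(ii) — one must verify that the guard, the $update$ flag, and Steps~$5$ and~$12$ are truly coupled stage by stage, so that Lemma~\ref{l:sparse}'s count of guard firings transfers verbatim to the count of full prefix-gradient evaluations.
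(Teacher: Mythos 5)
Your proof is correct and follows the same decomposition as the paper: count $3$ FOs per round of $\mathrm{FUM}$ to get $3\sum_i T_i$, and use Lemma~\ref{l:sparse} to bound the number of full prefix-gradient computations at Steps~5 and~12 by $\lceil\log n/\alpha\rceil$ each, each costing at most $n$ FOs, giving $2n\lceil\log n/\alpha\rceil$. The one thing you do beyond the paper is bucket~(iii): you explicitly account for the $1$-FO updates at Step~16 and the stage-$1$ warm start and argue they are absorbed into $3\sum_i T_i$; the paper's proof simply never mentions these (so its stated bound is really an asymptotic tally rather than an exact one). Your absorption argument is a reasonable patch and your synchronization remark about Steps~5/12 via the $update$ flag is exactly the coupling that the proof of Lemma~\ref{l:sparse} relies on.
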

Up next we present Theorem~\ref{t:accuracy} establishing that for a specific selection of parameter $\alpha >0$, Algorithm~\ref{alg:1} guarantees that each $\hat{x}_i \in \mathcal{D}$ is an $\epsilon$-optimal point for the function $g_i(x)$.
\begin{restatable}{theorem}{accuracy}\label{t:accuracy}
Let a convex and compact set $\mathcal{D}$ and a sequence $\mu$-strongly convex functions $f_1,\ldots,f_n$ with $f_i:\mathcal{D}\mapsto \mathbb{R}$. Then Algorithm~\ref{alg:1}, with $T_i = 720GL^2/(\mu^{5/2}i\sqrt{\epsilon})+9L^{2/3}G^{2/3} / (\epsilon^{1/3}\mu)+864L^2/\mu^2$
and $\alpha = \mu\epsilon^{1/3} / (20G^{2/3}L^{2/3})$, guarantees
\[\bE\left[g_i(\hat{x}_i) \right] - g_i(x^\star_i) \leq \epsilon~~~\text{for each stage } i \in [n]\]
where $\hat{x}_i \in \mathcal{D}$ is the output of Algorithm~\ref{alg:frequent} at Step~$9$ of Algorithm~\ref{alg:1}.
\end{restatable}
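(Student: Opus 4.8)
The plan is a straightforward induction on the stage $i\in[n]$. The inductive hypothesis is $\bE[g_j(\hat{x}_j)]-g_j(x_j^\star)\le\epsilon$ for all $j<i$, which by $\mu$-strong convexity of $g_j$ is equivalent to $\bE\|\hat{x}_j-x_j^\star\|^2\le 2\epsilon/\mu$; the base cases ($i=1$ and the first $\mathrm{FUM}$ call) are handled by the guarantee of $\mathrm{GradientDescent}$, noting $\redvargrad_1=\nabla f_1(\hat{x}_1)=\nabla g_1(\hat{x}_1)$. The first ingredient is a bookkeeping invariant: whenever stage $i$ enters $\mathrm{FUM}$, it holds that $\redvargrad_{i-1}=\nabla g_{i-1}(\hat{x}_{\prev})$ \emph{exactly}. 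Indeed $\redvargrad$ is re-anchored to a genuine prefix gradient $\nabla g_\ell(\hat{x}_\ell)$ at every stage $\ell$ where $\prev$ is (re)set (Steps~5 and~12 of Algorithm~\ref{alg:1}), and between consecutive re-anchorings the update $\redvargrad_m\leftarrow(1-\tfrac1m)\redvargrad_{m-1}+\tfrac1m\nabla f_m(\hat{x}_{\prev})$ telescopes to $\frac1m\sum_{k=1}^m\nabla f_k(\hat{x}_{\prev})=\nabla g_m(\hat{x}_{\prev})$. A direct consequence is that the $\mathrm{FUM}$ estimator is unbiased: taking the expectation over $u_t\sim\mathrm{Unif}(1,\dots,i-1)$ and using $\redvargrad_{i-1}=\nabla g_{i-1}(\hat{x}_{\prev})$ cancels the stale terms, leaving $\bE_{u_t}[\nabla_i^t]=\tfrac{i-1}{i}\nabla g_{i-1}(x_i^t)+\tfrac1i\nabla f_i(x_i^t)=\nabla g_i(x_i^t)$.

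Next I would prove a minimizer-stability lemma: $\|x_\ell^\star-x_{\ell+1}^\star\|\le G/(\mu(\ell+1))$ for every $\ell$. This follows by applying $\mu$-strong convexity of $g_{\ell+1}$ around $x_{\ell+1}^\star$, using the first-order optimality conditions of $x_\ell^\star$ (for $g_\ell$ over $\mathcal{D}$) and of $x_{\ell+1}^\star$ (for $g_{\ell+1}$ over $\mathcal{D}$) to discard sign-definite terms, decomposing $\nabla g_{\ell+1}=\tfrac{\ell}{\ell+1}\nabla g_\ell+\tfrac1{\ell+1}\nabla f_{\ell+1}$, and bounding the residual with $G$-Lipschitzness. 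Telescoping this, whenever $i-\prev<\alpha i$ (so $\prev>(1-\alpha)i\ge i/2$) one gets $\|x_{\prev}^\star-x_i^\star\|\le 2\alpha G/\mu$; combined with the inductive hypothesis this gives $\bE\|\hat{x}_{\prev}-x_i^\star\|^2=\mathcal{O}(\epsilon/\mu+\alpha^2 G^2/\mu^2)$ and likewise $\bE\|\hat{x}_{i-1}-x_i^\star\|^2=\mathcal{O}(\epsilon/\mu+G^2/(\mu^2 i^2))$ for the initialization of $\mathrm{FUM}$ at stage $i$.

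The heart of the argument is the analysis of one call of $\mathrm{FUM}$ at stage $i$, conditioned on the history: treat it as a projected (prox-)SVRG iteration for the $\mu$-strongly convex, $L$-smooth objective $g_i$ with the constraint $\mathcal{D}$ absorbed into the projection step. The key estimate is a variance bound of the shape
\[\bE_{u_t}\big\|\nabla_i^t-\nabla g_i(x_i^t)\big\|^2\;\le\;\bE_{u_t}\big\|\nabla_i^t-\nabla g_i(x_i^\star)\big\|^2\;\le\;c_1 L\big(g_i(x_i^t)-g_i(x_i^\star)\big)+c_2 L^2\big\|\hat{x}_{\prev}-x_i^\star\big\|^2+\frac{c_3 L^2}{i^2}\big\|x_i^t-x_i^\star\big\|^2,\]
obtained by centering the estimator at $\nabla g_i(x_i^\star)$, bounding the sampling variance by a second moment, invoking co-coercivity of each $f_j$, and — crucially — using the identity $g_{i-1}=\tfrac{i}{i-1}g_i-\tfrac1{i-1}f_i$ together with the first-order optimality $\langle\nabla g_i(x_i^\star),x-x_i^\star\rangle\ge0$ to bound the Bregman divergence of $g_{i-1}$ at $(x_i^t,x_i^\star)$ by that of $g_i$ (the leftover $-\tfrac1{i-1}$ times the Bregman divergence of $f_i$ has the favorable sign); this is what prevents a spurious additive $\|\nabla g_i(x_i^\star)\|^2\le G^2$ term. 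Feeding this into the (prox-)SGD one-step inequality with $\gamma_t=4/(\mu(t+\beta))$, $\beta=72L^2/\mu^2$ — so that $\gamma_t\le\mu/(18L^2)$ renders the $c_1L$ and $c_3L^2/i^2$ contributions absorbable — and summing against the output weights $(s+\beta-1)$ via a Lacoste-Julien–Schmidt–Bach-style telescoping, I would conclude
\[\bE\big[g_i(\hat{x}_i)\big]-g_i(x_i^\star)\;\le\;\mathcal{O}\!\left(\frac{\mu\beta^2\,\bE\|\hat{x}_{i-1}-x_i^\star\|^2}{T_i^2}\right)+\mathcal{O}\!\left(\frac{L^2\,\bE\|\hat{x}_{\prev}-x_i^\star\|^2}{\mu T_i}\right).\]

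Finally, substituting the two drift bounds above and the prescribed $\alpha=\mu\epsilon^{1/3}/(20G^{2/3}L^{2/3})$, $T_i=720GL^2/(\mu^{5/2}i\sqrt{\epsilon})+9L^{2/3}G^{2/3}/(\epsilon^{1/3}\mu)+864L^2/\mu^2$, I would verify term by term that each summand is $\le$ a constant fraction of $\epsilon$: the $864L^2/\mu^2$ piece of $T_i$ absorbs $\mu\beta^2\epsilon/(\mu T_i^2)$ and $L^2\epsilon/(\mu^2 T_i)$, the $720GL^2/(\mu^{5/2}i\sqrt{\epsilon})$ piece absorbs the minimizer-drift term $\mu\beta^2 G^2/(\mu^2 i^2 T_i^2)$, and the $9L^{2/3}G^{2/3}/(\epsilon^{1/3}\mu)$ piece (together with the choice of $\alpha$) absorbs $L^2\alpha^2 G^2/(\mu^3 T_i)$, closing the induction. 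The main obstacle I anticipate is precisely the variance bound: making its right-hand side free of any additive $G^2$ constant is essential — a $G^2/(\mu T_i)$ floor would force $T_i=\Omega(G^2/(\mu\epsilon))$, incompatible with the stated $T_i$ for large $i$ — which is why one must keep the constraint inside the proximal step and exploit first-order optimality of $x_i^\star$ and the $g_{i-1}$-versus-$g_i$ Bregman comparison instead of crude triangle inequalities; a secondary annoyance is that the $c_1L(g_i(x_i^t)-g_i(x_i^\star))$ term couples consecutive iterates, so the telescoping needs the shift $\beta\propto L^2/\mu^2$ (and a careful summing step) to absorb it, while the remainder is routine constant-chasing.
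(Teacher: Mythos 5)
Your overall plan — induction on the stage, the exact invariant $\redvargrad_{i-1}=\nabla g_{i-1}(\hat{x}_{\prev})$ (the paper's Lemma~\ref{lemm:induction_var}), unbiasedness, a minimizer-drift lemma, a one-step prox-SGD inequality with the shift $\beta\propto L^2/\mu^2$, weighted telescoping with weights $(t+\beta-1)$, and closing the induction by splitting $T_i$ into the three regimes — coincides exactly with the paper's proof skeleton (Appendix~\ref{app:thm3}, via Lemmas~\ref{lemm:induction_var},~\ref{l:unbias},~\ref{lemm:distbound2},~\ref{lemm:classicbound},~\ref{lemm:convergebound}). Where you genuinely diverge is the variance bound, and there your extra machinery is unnecessary. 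You propose centering the second moment at $\nabla g_i(x_i^\star)$, invoking co-coercivity to get a function-value bound $c_1 L\bigl(g_i(x_i^t)-g_i(x_i^\star)\bigr)$, and patching up the $g_{i-1}$-versus-$g_i$ mismatch (and the constrained-optimality wrinkle $\nabla g_i(x_i^\star)\neq 0$) through a Bregman-divergence comparison using $g_{i-1}=\tfrac{i}{i-1}g_i-\tfrac1{i-1}f_i$, all to avoid a feared additive $G^2$ floor. The paper's Lemma~\ref{lemm:variance} does the opposite of what you warn against and it works: since the $\tfrac1i\nabla f_i(\cdot)$ pieces cancel in $\nabla_i^t-\nabla g_i(x_i^t)$, a plain triangle inequality plus $L$-smoothness of each $f_j$ gives $\bE\|\nabla_i^t-\nabla g_i(x_i^t)\|^2\le 4L^2\|x_i^t-\hat{x}_{\prev}\|^2\le 8L^2\|x_i^t-x_i^\star\|^2+8L^2\|x_i^\star-\hat{x}_{\prev}\|^2$, with no $G^2$ anywhere; the $\|x_i^\star-\hat{x}_{\prev}\|^2$ term is then $\mathcal{O}(G^2\alpha^2/\mu^2+\epsilon/\mu)$ by the drift lemma plus the inductive hypothesis, which is exactly the $\alpha$-tunable term you end up with too. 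So the ``crude triangle inequalities'' you rule out are precisely what the paper uses, and they suffice because the estimator is variance-reduced around $\hat{x}_{\prev}$, not around a stale anchor of size $G$. Your route is still correct — indeed $g_i(x_i^t)-g_i(x_i^\star)\le\tfrac{L}{2}\|x_i^t-x_i^\star\|^2$ collapses your bound into the paper's up to constants — but it buys nothing here and costs you the Bregman comparison and the extra care in the telescoping that you yourself flag as an annoyance; the paper's norm-based bound is strictly simpler and feeds directly into the strong-convexity contraction term of Lemma~\ref{lemm:classicbound}.
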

The proof of Theorem~\ref{thm:1} directly follows by Theorem~\ref{t:accuracy} and Corollary~\ref{l:FOs}. For completeness the proof Theorem~\ref{thm:1} is presented in Appendix~\ref{app:const-approx}. In the rest of the section we present the key steps for proving Theorem~\ref{t:accuracy} (see Appendix~\ref{app:thm3}) for the full proof. 

We first explain the role of the gradient estimator $\nabla_i^t$ in Step~5 of Algorithm~\ref{alg:frequent}. This estimator $\nabla_i^t$ of Step~$5$ in Algorithm~\ref{alg:frequent} may seem unintuitive at first sight since it subtracts the term $\nabla f_{\textcolor{blue}{u_t}}(\textcolor{red}{\hat{x}_{\prev}})$. Interestingly enough the latter estimator-selection permits us to establish the following two key properties for $\nabla_i^t$.
\begin{restatable}[Unbiased]{lemma}{unbias}\label{l:unbias}
Let $\nabla_i^t$ the gradient estimator used in Step~$5$ of Algorithm~\ref{alg:frequent}. Then for all $t \in [T_i]$, $\mathbb{E}\left[ \nabla_i^t  \right] = \nabla g_i(x_i^t)$.
\end{restatable}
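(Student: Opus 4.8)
The plan is to evaluate the conditional expectation of $\nabla_i^t$ given everything generated before the index $u_t$ is drawn; at that moment $x_i^t$, the anchor $\hat{x}_{\prev}$ and the maintained direction $\redvargrad_{i-1}$ are all fixed, and $u_t\sim\mathrm{Unif}(1,\dots,i-1)$ is fresh. Since each $k\in[i-1]$ is equally likely,
\[
\E\!\left[\nabla f_{u_t}(x_i^t)\right]=\frac{1}{i-1}\sum_{k=1}^{i-1}\nabla f_k(x_i^t)=\nabla g_{i-1}(x_i^t),\qquad
\E\!\left[\nabla f_{u_t}(\hat{x}_{\prev})\right]=\nabla g_{i-1}(\hat{x}_{\prev}),
\]
so, substituting into the definition of $\nabla_i^t$,
\[
\E\!\left[\nabla_i^t\right]=\left(1-\tfrac1i\right)\!\left(\nabla g_{i-1}(x_i^t)-\nabla g_{i-1}(\hat{x}_{\prev})+\redvargrad_{i-1}\right)+\tfrac1i\nabla f_i(x_i^t).
\]

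The heart of the argument is the invariant that, whenever Algorithm~\ref{alg:frequent} is invoked during stage $i$, the maintained direction equals the true prefix-gradient at the current anchor: $\redvargrad_{i-1}=\nabla g_{i-1}(\hat{x}_{\prev})$. Granting this, the two middle terms cancel and, using $\nabla g_{i-1}=\tfrac{1}{i-1}\sum_{k=1}^{i-1}\nabla f_k$,
\[
\E\!\left[\nabla_i^t\right]=\left(1-\tfrac1i\right)\nabla g_{i-1}(x_i^t)+\tfrac1i\nabla f_i(x_i^t)=\frac1i\sum_{k=1}^{i-1}\nabla f_k(x_i^t)+\frac1i\nabla f_i(x_i^t)=\nabla g_i(x_i^t),
\]
which is the stated identity (the expectation being conditional on $x_i^t$, exactly as in the claim).

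To establish the invariant I would induct on $i$, tracking the scalar variable $\prev$ and the three lines of Algorithm~\ref{alg:1} that assign $\redvargrad$. At Step~5 (executed before the FUM call of stage $i$) and at Step~12 the direction is recomputed from scratch as $\nabla g_\ell(\hat{x}_\ell)$ while $\prev$ is simultaneously set to $\ell$, so immediately afterwards $\redvargrad_\ell=\nabla g_\ell(\hat{x}_{\prev})$; and between one stage's FUM call and the next, $\redvargrad$ and $\prev$ change only if Step~5 fires, which re-establishes exactly the same relation. The only substantive case is the lazy update at Step~16, which is reached precisely when Step~5 did not fire at the current stage, so $\prev$ still has the value it had at that stage's FUM call and the inductive hypothesis gives $\redvargrad_{i-1}=\nabla g_{i-1}(\hat{x}_{\prev})$; then
\[
\redvargrad_i=\left(1-\tfrac1i\right)\nabla g_{i-1}(\hat{x}_{\prev})+\tfrac1i\nabla f_i(\hat{x}_{\prev})=\frac1i\sum_{k=1}^{i-1}\nabla f_k(\hat{x}_{\prev})+\frac1i\nabla f_i(\hat{x}_{\prev})=\nabla g_i(\hat{x}_{\prev}),
\]
so the invariant is preserved with the same anchor. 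For the base case $i=2$ one checks that the test $i-\prev\ge\alpha i$ necessarily fires (since $\prev=0$ and $\alpha<1$ for the parameter choice of Theorem~\ref{t:accuracy}), whence $\prev$ becomes $1$ and $\redvargrad_1=\nabla f_1(\hat{x}_1)=\nabla g_1(\hat{x}_1)$ matches the invariant.

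I expect the only real difficulty to be bookkeeping: the index of $\redvargrad$ advances by one at \emph{every} stage — either by a full $\Theta(i)$-cost recomputation (Steps~5 and~12) or by the single-gradient lazy step (Step~16) — whereas the anchor index $\prev$ advances only at the sparse recomputation stages, and one must verify that the point $\hat{x}_{\prev}$ that literally appears inside the estimator $\nabla_i^t$ is the very point at which $\redvargrad_{i-1}$ was last anchored. Once the invariant $\redvargrad_{i-1}=\nabla g_{i-1}(\hat{x}_{\prev})$ is in hand, unbiasedness is the one-line cancellation above.
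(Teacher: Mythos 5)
Your proof is correct and follows essentially the same path as the paper: you condition on the filtration, take expectation over $u_t$, reduce unbiasedness to the invariant $\redvargrad_{i-1}=\nabla g_{i-1}(\hat{x}_{\prev})$, and prove that invariant by induction over the three assignments to $\redvargrad$ in Algorithm~\ref{alg:1} --- which is precisely the paper's Lemma~\ref{lemm:induction_var} restated in terms of $\nabla g_{i-1}$ rather than the explicit sum. The one phrasing slip --- that between consecutive FUM calls ``$\redvargrad$ and $\prev$ change only if Step~5 fires'' (Steps~12, 13 and 16 also modify them) --- is immediately corrected by your explicit treatment of Step~16 as the lazy case, so the argument as a whole is sound.
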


\begin{restatable}[Bounded Variance]{lemma}{boundvariance}\label{l:bounded_variance}
Let $\nabla_i^t$ the gradient estimator used in Step~$5$ of Algorithm~\ref{alg:frequent}. Then for all $t \in [T_i]$,
\[\bE\left[\norm{\nabla_i^t-\nabla g_i(x_t)}_2^2\right] \leq 8L^2\bE\left[\norm{x_i^t - x_i^\star}_2^2\right]+64\frac{L^2G^2}{\mu^2}\cdot \alpha^2 + 16L^2 \bE\left[\norm{x_{\prev}^\star - \hat{x}_{\prev}}^2_2\right]\]
where $\alpha > 0$ is  the parameter used at Step~$4$ of Algorithm~\ref{alg:1}.
\end{restatable}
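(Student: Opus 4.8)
The plan is to identify precisely what the ``anchor gradient'' $\redvargrad_{i-1}$ equals, collapse the estimator error to a single smoothness term, and then control the resulting distance by a stability estimate on the prefix-minimizers.

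\textbf{Step 1 (the anchor identity).} I would first show, by induction over the stages, that whenever $\mathrm{FUM}$ is invoked at stage $i$ one has $\redvargrad_{i-1}=\nabla g_{i-1}(\hat{x}_{\prev})$. Right after Step~5 or Step~12 of Algorithm~\ref{alg:1} this holds by construction ($\redvargrad_{\prev}=\nabla g_{\prev}(\hat{x}_{\prev})$), and every subsequent ``$update=false$'' stage replaces $\redvargrad_{\ell}$ by $(1-\tfrac1\ell)\redvargrad_{\ell-1}+\tfrac1\ell\nabla f_\ell(\hat{x}_{\prev})$ while $\hat{x}_{\prev}$ stays frozen; since $\nabla g_\ell(x)=(1-\tfrac1\ell)\nabla g_{\ell-1}(x)+\tfrac1\ell\nabla f_\ell(x)$, this is exactly the telescoping update of the running average $\tfrac1\ell\sum_{j\le\ell}\nabla f_j(\hat{x}_{\prev})$, so the identity propagates.

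\textbf{Step 2 (error algebra and conditional variance).} Plugging Step~1 and $\nabla g_i(x_i^t)=(1-\tfrac1i)\nabla g_{i-1}(x_i^t)+\tfrac1i\nabla f_i(x_i^t)$ into $\nabla_i^t$, the $\tfrac1i\nabla f_i(x_i^t)$ terms cancel and
\[
\nabla_i^t-\nabla g_i(x_i^t)=\Big(1-\tfrac1i\Big)\big(Y_{u_t}-\overline{Y}\big),\qquad Y_j:=\nabla f_j(x_i^t)-\nabla f_j(\hat{x}_{\prev}),\quad \overline{Y}:=\tfrac{1}{i-1}\sum_{j=1}^{i-1}Y_j,
\]
since $\overline{Y}=\nabla g_{i-1}(x_i^t)-\nabla g_{i-1}(\hat{x}_{\prev})$. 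Conditioning on the history before $u_t$ is drawn, $\mathbb{E}_{u_t}[Y_{u_t}]=\overline{Y}$, hence $\mathbb{E}_{u_t}\norm{Y_{u_t}-\overline{Y}}^2=\Var_{u_t}(Y_{u_t})\le\mathbb{E}_{u_t}\norm{Y_{u_t}}^2$; using $(1-\tfrac1i)^2\le1$ and the $L$-smoothness of each $f_j$, $\mathbb{E}_{u_t}\norm{Y_{u_t}}^2=\tfrac{1}{i-1}\sum_{j}\norm{\nabla f_j(x_i^t)-\nabla f_j(\hat{x}_{\prev})}^2\le L^2\norm{x_i^t-\hat{x}_{\prev}}^2$. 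Taking total expectation yields $\mathbb{E}\norm{\nabla_i^t-\nabla g_i(x_i^t)}^2\le L^2\,\mathbb{E}\norm{x_i^t-\hat{x}_{\prev}}^2$.

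\textbf{Step 3 (stability of prefix-minimizers and the triangle split).} I would then prove the drift bound $\norm{x_i^\star-x_{\prev}^\star}\le\tfrac{2(i-\prev)G}{\mu i}$. As $\mathcal{D}$ is convex and $g_i$ is $\mu$-strongly convex, first-order optimality of $x_i^\star$ gives $g_i(x)-g_i(x_i^\star)\ge\tfrac\mu2\norm{x-x_i^\star}^2$ for all $x\in\mathcal{D}$; applying this with $x=x_{\prev}^\star$, the symmetric inequality for $g_{\prev}$ with $x=x_i^\star$, and adding, gives $\mu\norm{x_i^\star-x_{\prev}^\star}^2\le(g_i-g_{\prev})(x_{\prev}^\star)-(g_i-g_{\prev})(x_i^\star)$. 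Writing $g_i=\tfrac{\prev}{i}g_{\prev}+h$ with $h=\tfrac1i\sum_{j=\prev+1}^{i}f_j$, and using that $g_{\prev}$ is $G$-Lipschitz, $h$ is $\tfrac{(i-\prev)G}{i}$-Lipschitz, and $g_{\prev}(x_i^\star)\ge g_{\prev}(x_{\prev}^\star)$, the right-hand side is at most $\tfrac{2(i-\prev)G}{i}\norm{x_i^\star-x_{\prev}^\star}$, which gives the claim. The reset rule of Algorithm~\ref{alg:1} keeps $i-\prev\le\alpha i$, so $\norm{x_i^\star-x_{\prev}^\star}\le 2\alpha G/\mu$. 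Finally, $\norm{x_i^t-\hat{x}_{\prev}}\le\norm{x_i^t-x_i^\star}+\norm{x_i^\star-x_{\prev}^\star}+\norm{x_{\prev}^\star-\hat{x}_{\prev}}$ together with $(a+b+c)^2\le3(a^2+b^2+c^2)$ turns Step~2 into $3L^2\mathbb{E}\norm{x_i^t-x_i^\star}^2+12\tfrac{L^2G^2}{\mu^2}\alpha^2+3L^2\mathbb{E}\norm{x_{\prev}^\star-\hat{x}_{\prev}}^2$, which is comfortably within the stated bound (since $3\le8$, $12\le64$, $3\le16$).

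\textbf{Expected main obstacle.} The conceptual crux is Step~1: recognizing that the one-gradient running update keeps $\redvargrad_{i-1}$ \emph{exactly} equal to $\nabla g_{i-1}$ at the frozen anchor $\hat{x}_{\prev}$, which is precisely what makes $\nabla_i^t$ genuinely variance-reduced and collapses the error to a single smoothness term. The main technical obstacle is the stability bound in Step~3 in the \emph{constrained} regime: one cannot invoke $\nabla g_i(x_i^\star)=0$, so the symmetric two-sided strong-convexity argument combined with the Lipschitzness of the prefix averages is needed to recover the sharp $(i-\prev)/i$ factor — a cruder argument would lose the crucial $\alpha$-dependence. One also has to be careful tracking the conditioning when passing from $\mathbb{E}_{u_t}$ to the total expectation, and in invoking the $i-\prev\le\alpha i$ invariant of Algorithm~\ref{alg:1}.
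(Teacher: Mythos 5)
Your proof is correct, and it actually establishes a strictly sharper inequality than the lemma asks for: you end up with constants $3L^2$, $12L^2G^2/\mu^2$, and $3L^2$ in place of $8$, $64$, and $16$. The overall skeleton matches the paper's --- the anchor identity $\redvargrad_{i-1}=\nabla g_{i-1}(\hat{x}_{\prev})$ is precisely Lemma~\ref{lemm:induction_var}, and the plan (collapse the error to a smoothness term, then control $\norm{x_i^t-\hat{x}_{\prev}}$ via a drift bound on the prefix-minimizers plus a triangle split) is the same as the paper's Lemmas~\ref{lemm:variance} and~\ref{lemm:distbound2}. But you take two genuinely different shortcuts. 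In Step~2, after noting $\nabla_i^t-\nabla g_i(x_i^t)=(1-\tfrac1i)(Y_{u_t}-\overline Y)$, you exploit that $\overline Y = \E_{u_t}Y_{u_t}$, so the left-hand side is a variance and $\Var(Y_{u_t})\le\E\norm{Y_{u_t}}^2$; the paper instead splits $Y_{u_t}-\overline Y$ into two pieces with $\norm{a+b}^2\le 2\norm{a}^2+2\norm{b}^2$ and bounds each separately, which is cruder by a factor of $4$. You also do one $3$-term triangle split where the paper does two $2$-term ones (Lemma~\ref{lemm:variance} then Lemma~\ref{lemm:distbound2}); again, yours wastes fewer constants. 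In Step~3, your drift bound $\norm{x_i^\star-x_{\prev}^\star}\le\tfrac{2(i-\prev)G}{\mu i}$ comes from adding the two strong-convexity-at-the-minimizer inequalities symmetrically and then Lipschitz-bounding $g_i-g_{\prev}$; the paper's Lemma~\ref{lemm:optdist} applies strong convexity only at $x_{i+j}^\star$ plus the variational inequality $\langle\nabla g_{i+j}(x_{i+j}^\star),x_i^\star-x_{i+j}^\star\rangle\ge 0$, which keeps a $-\tfrac{\mu}{2}\norm{\cdot}^2$ term on the right and yields the slightly tighter $\tfrac{2jG}{\mu(2i+j)}$; but the paper then also relaxes $2i+j$ to $i+j$ and finally $i$, so the two derivations land in the same place. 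Both correctly avoid assuming $\nabla g_i(x_i^\star)=0$, which, as you flagged, is the genuine subtlety in the constrained setting. One nit: the parenthetical ``and $g_{\prev}(x_i^\star)\ge g_{\prev}(x_{\prev}^\star)$'' in your Step~3 is not actually needed for the $\tfrac{2(i-\prev)G}{i}$ bound --- you already absorb both terms via Lipschitzness --- but it is harmless.
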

The proof of Lemma~\ref{l:unbias} and Lemma~\ref{l:bounded_variance} are respectively presented in Appendix~\ref{app:bias} and Appendix~\ref{app:variance} and consist one of the main technical contributions of this work. In Section~\ref{section:estimator-analysis} we explain why the specific estimator-selection $\nabla_i^t$ is crucial for establishing both Lemma~\ref{l:unbias} and~\ref{l:bounded_variance}.  

In the rest of the section, we provide the main steps for establishing Theorem~\ref{t:accuracy}.  Let us first inductively assume that $\bE\left[ g_j(\hat{x}_j) - g_j(x_j^\star) \right] \leq \epsilon$ for all $j \leq i-1$. We use the latter to establish that $\bE\left[g_i(\hat{x}_i) - g_i(x_i^\star) \right] \leq \epsilon$.

Notice that Step~$4$ of Algorithm~\ref{alg:1} guarantees that $\prev \leq i-1$ at Step~$10$. Hence the induction hypothesis ensures that $\bE\left[ g_{\prev}(\hat{x}_{\prev}) - g_{\prev}(x_{\prev}^\star) \right] \leq \epsilon$ and thus by strong convexity $\bE\left[ \norm{\hat{x}_{\prev} - x_{\prev}^\star}^2 \right] \leq  2\epsilon/\mu$. Thus the bound in the variance of Lemma~\ref{l:bounded_variance},
\begin{equation}\label{eq:new_var}
\bE\left[\norm{\nabla_i^t-\nabla g_i(x_t)}_2^2\right] \leq \mathcal{O}\left(L^2\bE\left[\norm{x_i^t - x_i^\star}_2^2\right]+\frac{L^2G^2}{\mu^2}\cdot \alpha^2 + \frac{L^2}{\mu}\epsilon\right)
\end{equation}
Using the fact that $\bE\left[\nabla_i^t \right] = \nabla g_i(x_i^t)$ and the bound in the variance $\bE\left[\norm{\nabla_i^t-\nabla g_i(x_t)}_2^2 \right]$ of Equation~\ref{eq:new_var},  in Lemma~\ref{lemm:convergebound} of Appendix~\ref{app:convbound} we establish that
\[\bE\left[g_i(\hat{x}_i) \right] - g_i(x_i^\star) := \bE\left[g_i(x_i^{T_i}) \right] - g_i(x_i^\star)\leq  \mathcal{O}\left(\frac{L^4}{\mu^3}\frac{\norm{x_i^0-x_i^\star}_2^2}{T_i^2}+\frac{L^2 G^2}{\mu^3}\frac{\alpha^2}{ T_i}+\frac{L^2\epsilon}{\mu^2 T_i}\right). \]
Notice that at Step~$2$ of Algorithm~\ref{alg:frequent}, $x_i^0 := \hat{x}_{i-1} \in \mathcal{D}$ meaning that  
\begin{eqnarray*}
    \bE\left[g_i(\hat{x}_i) \right] - g_i(x_i^\star) &\leq&  \mathcal{O}\left(\frac{L^4}{\mu^3} \frac{\norm{\hat{x}_{i-1}-x_i^\star}_2^2}{T_i^2} +\frac{L^2G^2}{\mu^3}\frac{\alpha^2}{ T_i}+\frac{L^2\epsilon}{\mu^2 T_i}\right)\\
    &\leq& \mathcal{O}\left(\frac{L^4G^2}{\mu^5i^2 
    \cdot T_i^2}+\frac{L^4\epsilon}{\mu^4 T_i^2} +\frac{L^2G^2\alpha^2}{\mu^3 T_i}+\frac{L^2\epsilon}{\mu^2 T_i}\right)
\end{eqnarray*}
where the last inequality follows by $\norm{\hat{x}_{i-1} -x^\star_i}_2^2 \leq 2\norm{\hat{x}_{i-1} -x^\star_{i-1}}_2^2 + 2\norm{x^\star_{i-1} -x^\star_i}_2^2 $, $\norm{x_i^\star - x_{i-1}^\star} \leq \mathcal{O}(G^2/(\mu i))$ and $\norm{\hat{x}_{i-1} - x^\star_{i-1}} \leq \epsilon/\mu$. As a result, by taking $T_i = \mathcal{O}\left( \frac{L^2G}{\mu^{5/2}i\sqrt{\epsilon}} + \frac{L^2G^2\alpha^2}{ \mu^3\epsilon} + \frac{L^2}{\mu^2} \right)$ we get that $\bE\left[g_i(\hat{x}_i) \right] - g_i(x_i^\star) \leq  \epsilon$.

\section{Analyzing the Estimator \texorpdfstring{$\nabla_i^t$}{}}
\label{section:estimator-analysis}
In this section we provide the key steps for proving Lemma~\ref{l:unbias} and Lemma~\ref{l:bounded_variance} respectively.

\textbf{Unbias estimator:} The basic step for showing that 
$\nabla_i^t$ of Step~$5$ in Algorithm~\ref{alg:frequent} 
is an unbiased estimator, $\bE\left[\nabla_i^t \right] = \nabla g_i(x_i^t)$, is establishing the following property for $\redvargrad_{i-1}$ defined in Algorithm~\ref{alg:1}.

\begin{restatable}{lemma}{inductvar}\label{lemm:induction_var}
At Step~$10$ of Algorithm~\ref{alg:1}, it holds that $\redvargrad_{i-1} = \sum_{k=1}^{i-1} \nabla f_k(\hat{x}_{\prev})/(i-1)$
\end{restatable}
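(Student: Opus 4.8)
The plan is to prove, by induction on the stage index $i$, the invariant
\[ Q(i):\qquad \text{at Step~$10$ of stage $i$,}\quad \redvargrad_{i-1} = \frac{1}{i-1}\sum_{k=1}^{i-1}\nabla f_k(\hat{x}_{\prev}), \]
where $\prev$ denotes the current value of that state variable; this is precisely the assertion of the lemma. The structural observation that makes the induction go through is that $\prev$ is modified only at Steps~$6$ and~$13$ of Algorithm~\ref{alg:1}, and each such modification is accompanied (at Step~$5$, resp.\ Step~$12$) by an assignment resetting $\redvargrad$ to a \emph{full} prefix-average evaluated exactly at the new $\hat{x}_{\prev}$; in every other stage $\redvargrad$ is advanced only by the convex-combination rule of Step~$16$, during which $\prev$ stays frozen.

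For the base case $i=2$ I would note that $\prev=0$ makes the test $i-\prev\ge\alpha i$ at Step~$4$ read $2\ge 2\alpha$, which is true since $\alpha\le 1$ (this holds for the parameters of Theorem~\ref{t:accuracy}, and may otherwise be assumed without loss of generality, since enlarging $\alpha$ beyond $1$ only triggers Step~$4$ more often). Hence Steps~$5$--$6$ set $\redvargrad_1=\nabla f_1(\hat{x}_1)$ and $\prev=1$, and neither is altered before Step~$10$, which gives $Q(2)$.

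For the inductive step, assuming $Q(j)$ for all $2\le j\le i-1$, I would split on whether Step~$4$ fires at stage $i$. If it does, Steps~$5$--$6$ directly install $\redvargrad_{i-1}=\frac{1}{i-1}\sum_{k\le i-1}\nabla f_k(\hat{x}_{i-1})$ and $\prev=i-1$, which is $Q(i)$. If it does not, then at Step~$10$ of stage $i$ the pair $(\redvargrad_{i-1},\prev)$ coincides with its value at the end of stage $i-1$, and I would split once more on the flag $update$ at Step~$11$ of stage $i-1$: if $update$ was true, then Step~$4$ had fired at stage $i-1$, so Steps~$12$--$13$ produced $\redvargrad_{i-1}=\frac{1}{i-1}\sum_{k\le i-1}\nabla f_k(\hat{x}_{i-1})$ with $\prev=i-1$; if $update$ was false, then Step~$4$ did not fire at stage $i-1$, so $\redvargrad_{i-2}$ and $\prev$ entering Step~$16$ of stage $i-1$ are exactly their values at Step~$10$ of stage $i-1$, and writing $p$ for that value of $\prev$, the hypothesis $Q(i-1)$ together with the Step~$16$ update collapses by the telescoping identity
\[ \Bigl(1-\tfrac{1}{i-1}\Bigr)\frac{1}{i-2}\sum_{k=1}^{i-2}\nabla f_k(\hat{x}_{p}) + \frac{1}{i-1}\nabla f_{i-1}(\hat{x}_{p}) = \frac{1}{i-1}\sum_{k=1}^{i-1}\nabla f_k(\hat{x}_{p}); \]
since $\prev$ is untouched both by Step~$16$ and by the (failed) Step~$4$ of stage $i$, we still have $\prev=p$ at Step~$10$ of stage $i$, which is $Q(i)$.

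I expect no analytic difficulty: the only real care is in the state-tracking of $\prev$ and $update$ across the conditional branches, and in particular in checking, in the last sub-case, that the $\prev$ referenced by $Q(i-1)$, the $\prev$ appearing in the Step~$16$ update, and the $\prev$ present at Step~$10$ of stage $i$ are all the same index $p$. The dependence of the base case on $\alpha\le 1$ is the only other point worth flagging.
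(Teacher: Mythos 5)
Your proof is correct and takes essentially the same inductive approach as the paper's: establishing the base case at stage $2$ (where Step~4 necessarily fires, since $\prev=0$ and $\alpha\le 1$), and then splitting the inductive step into the three cases of whether Step~4 fires at stage $i$, fired at stage $i-1$ (equivalently, whether the $update$ flag is set), or fired at neither, with the last case closed by the same telescoping identity. Your version is slightly more careful in explicitly tracking that the $\prev$ appearing in $Q(i-1)$, in the Step~16 update, and at Step~10 of stage $i$ all coincide, but the underlying argument matches the paper's.
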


The proof of Lemma~\ref{lemm:induction_var} is presented in Appendix~\ref{app:bias} and admits an inductive proof on the steps of Algorithm~\ref{alg:1}. Once Lemma~\ref{lemm:induction_var} is established the fact that $\bE\left[\nabla_i^t\right] = \nabla f(x_i^t)$ easily follows by the selection of the estimator (see Step~$5$ of Algorithm~\ref{alg:frequent}). For the full proof of Lemma~\ref{l:unbias} we refer the reader to Appendix~\ref{app:bias}.

\textbf{Bounding the Variance of \texorpdfstring{$\nabla_i^t$}{}:}
The first step towards establishing Lemma~\ref{l:bounded_variance} is presented in Lemma~\ref{lemm:variance} providing a first bound on the variance $\bE\left[\norm{\nabla_i^t-\nabla g_i(x_i^t)}_2^2\right]$. The proof of Lemma~\ref{lemm:variance} uses ideas derived from the analysis of VR methods and is presented in Appendix~\ref{app:variance}.
    \begin{restatable}{lemma}{variance}
        \label{lemm:variance}
For all rounds $t \in T_i$ of stage $i \in [n]$,
\begin{equation}\label{eq:variance}
            \bE\left[\norm{\nabla_i^t-\nabla g_i(x_i^t)}_2^2\right] \leq 8L^2\bE\left[\norm{x_i^t - x_i^\star}_2^2\right]+8L^2\bE\left[\norm{x_i^\star - \hat{x}_{\prev}}_2^2\right]
\end{equation}
where $\prev$ is defined in Algorithm~\ref{alg:1}.
\end{restatable}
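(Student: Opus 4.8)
The plan is to exploit the control-variate structure of $\nabla_i^t$ together with the identity $\redvargrad_{i-1}=\nabla g_{i-1}(\hat{x}_{\prev})$ supplied by \cref{lemm:induction_var}. First I would write the prefix gradient as $\nabla g_i(x)=\bigl(1-\tfrac1i\bigr)\nabla g_{i-1}(x)+\tfrac1i\nabla f_i(x)$ and subtract it from the definition of $\nabla_i^t$ in Step~$5$ of \cref{alg:frequent}. The two $\tfrac1i\nabla f_i(x_i^t)$ terms cancel, and after replacing $\redvargrad_{i-1}$ by $\nabla g_{i-1}(\hat{x}_{\prev})$ via \cref{lemm:induction_var} one is left with
\[
\nabla_i^t-\nabla g_i(x_i^t)=\Bigl(1-\tfrac1i\Bigr)\bigl(Y_{u_t}-\bE[Y_{u_t}\mid\cF]\bigr),\qquad Y_u:=\nabla f_u(x_i^t)-\nabla f_u(\hat{x}_{\prev}),
\]
where $\cF$ denotes the $\sigma$-algebra generated by all randomness produced before $u_t$ is drawn at round $t$ of stage $i$; in particular $x_i^t$ and $\hat{x}_{\prev}$ are $\cF$-measurable, $u_t\sim\mathrm{Unif}(1,\dots,i-1)$ is independent of $\cF$, and $\bE[Y_{u_t}\mid\cF]=\nabla g_{i-1}(x_i^t)-\nabla g_{i-1}(\hat{x}_{\prev})$.

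Second, I would run the standard variance estimate conditionally on $\cF$. Using $(1-\tfrac1i)^2\le 1$ and $\norm{a-b}^2\le 2\norm{a}^2+2\norm{b}^2$,
\[
\bE\bigl[\norm{\nabla_i^t-\nabla g_i(x_i^t)}_2^2\,\big|\,\cF\bigr]\le 2\,\bE\bigl[\norm{Y_{u_t}}_2^2\mid\cF\bigr]+2\,\norm{\bE[Y_{u_t}\mid\cF]}_2^2 .
\]
The first term equals $\tfrac1{i-1}\sum_{k=1}^{i-1}\norm{\nabla f_k(x_i^t)-\nabla f_k(\hat{x}_{\prev})}_2^2\le L^2\norm{x_i^t-\hat{x}_{\prev}}_2^2$ by $L$-smoothness of each $f_k$, and the second term is again at most $L^2\norm{x_i^t-\hat{x}_{\prev}}_2^2$, either by Jensen's inequality or because $\nabla g_{i-1}$ is an average of $L$-smooth maps and hence itself $L$-smooth. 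So the conditional bound is $4L^2\norm{x_i^t-\hat{x}_{\prev}}_2^2$. Inserting $x_i^\star$ via $\norm{x_i^t-\hat{x}_{\prev}}_2^2\le 2\norm{x_i^t-x_i^\star}_2^2+2\norm{x_i^\star-\hat{x}_{\prev}}_2^2$ and taking total expectation by the tower property then yields exactly \cref{eq:variance} (the constant $8$ being the non-tight outcome of the two applications of $\norm{a-b}^2\le 2\norm{a}^2+2\norm{b}^2$).

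The only genuinely delicate point is the measurability/conditioning bookkeeping: one has to be precise that $u_t$ is sampled fresh at each round and is independent of everything generated so far, so that the centering step $Y_{u_t}-\bE[Y_{u_t}\mid\cF]$ is legitimate, while the reference point $\hat{x}_{\prev}$ appearing inside $\nabla f_{u_t}(\hat{x}_{\prev})$ is \emph{exactly} the point at which $\redvargrad_{i-1}$ was last recomputed from scratch — the content of \cref{lemm:induction_var} — and not $\hat{x}_{i-1}$. It is precisely this matching that makes the bracket collapse to a centered quantity times $(1-\tfrac1i)$; with a mismatched reference point an uncancelled term of order $\norm{\hat{x}_{\prev}-\hat{x}_{i-1}}$ would survive and the bound would break. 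Everything else is the textbook SVRG-style variance computation.
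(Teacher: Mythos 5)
Your proof is correct and follows essentially the same route as the paper's: cancel the $\tfrac1i\nabla f_i$ terms, use \cref{lemm:induction_var} to identify $\redvargrad_{i-1}=\nabla g_{i-1}(\hat{x}_{\prev})$, split into two pieces with $\norm{a-b}^2\le 2\norm{a}^2+2\norm{b}^2$, apply $L$-smoothness to each, and finally interpolate through $x_i^\star$. The only difference is presentational — you phrase the bound as a conditional second moment of a centered variable $Y_{u_t}-\bE[Y_{u_t}\mid\cF]$ and invoke the tower property, whereas the paper unrolls the same two inequalities term by term; this is tidier but buys nothing substantively new.
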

The next step for establishing Lemma~\ref{l:bounded_variance} is handling
the term $\bE\left[\norm{x_i^\star - \hat{x}_{\prev}}_2^2\right]$ of Equation~\ref{eq:variance}. 
In order to do the latter in we exploit the strong convexity assumption so as to establish Lemma~\ref{lemm:distbound2} the proof of which lies in Appendix~\ref{app:cons-optimum}.
\begin{restatable}{lemma}{distancebound}
    \label{lemm:distbound2}
    Let $\hat{x}_j \in \mathcal{D}$ the output of Algorithm~\ref{alg:1} for stage $j\in [n]$. Then for all $i \in \{j+1,\dots,n\}$,
    \[\norm{\hat{x}_j-x_{i}^\star}_2^2\leq \frac{8}{\mu^2}\left(\frac{G(i-j)}{i+j}\right)^2+2\norm{\hat{x}_j - x^\star_j}^2_2\]
\end{restatable}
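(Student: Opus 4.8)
The plan is to peel off the dependence on $\hat{x}_j$ immediately: by the elementary inequality $\norm{a-c}_2^2\le 2\norm{a-b}_2^2+2\norm{b-c}_2^2$ applied with $a=\hat{x}_j$, $b=x_j^\star$, $c=x_i^\star$, we get $\norm{\hat{x}_j-x_i^\star}_2^2\le 2\norm{\hat{x}_j-x_j^\star}_2^2+2\norm{x_j^\star-x_i^\star}_2^2$. Thus it suffices to prove the pure drift bound on the prefix–minimizers,
\[
\norm{x_j^\star-x_i^\star}_2\;\le\;\frac{2G(i-j)}{\mu(i+j)},
\]
since squaring this gives the $\frac{4}{\mu^2}\left(\frac{G(i-j)}{i+j}\right)^2$ term, and the factor $2$ in front turns it into the $\frac{8}{\mu^2}(\cdots)$ of the statement. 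Note this reduction uses nothing about $\hat{x}_j$ being the algorithm's output; the inequality holds for any $\hat{x}_j\in\mathcal{D}$.

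For the drift bound, write $d:=x_i^\star-x_j^\star$ and use the variational characterization of constrained minimizers over the convex set $\mathcal{D}$. Since $x_j^\star$ minimizes $g_j=\tfrac1j\sum_{k\le j}f_k$ and $x_i^\star\in\mathcal{D}$, we have $\langle\nabla g_j(x_j^\star),x_i^\star-x_j^\star\rangle\ge 0$, i.e. $\sum_{k\le j}\langle\nabla f_k(x_j^\star),d\rangle\ge 0$; symmetrically $\sum_{k\le i}\langle\nabla f_k(x_i^\star),d\rangle\le 0$. I would now extract two \emph{asymmetric} estimates. Padding the first inequality with the ``new'' indices $j<k\le i$, subtracting the second, and invoking $\mu$-strong monotonicity $\langle\nabla f_k(x_i^\star)-\nabla f_k(x_j^\star),d\rangle\ge\mu\norm{d}_2^2$ for all $k\le i$, I obtain
\[
i\mu\norm{d}_2^2\;\le\;-\!\!\sum_{j<k\le i}\!\langle\nabla f_k(x_j^\star),d\rangle .
\]
Running the mirror-image combination (pad the second inequality instead, apply monotonicity only for $k\le j$) gives
\[
j\mu\norm{d}_2^2\;\le\;-\!\!\sum_{j<k\le i}\!\langle\nabla f_k(x_i^\star),d\rangle .
\]
Adding these and using Cauchy--Schwarz together with the $G$-Lipschitz bound $\norm{\nabla f_k(\cdot)}_2\le G$ yields $(i+j)\mu\norm{d}_2^2\le 2(i-j)G\norm{d}_2$, hence $\norm{d}_2\le 2G(i-j)/(\mu(i+j))$ (the case $d=0$ being trivial), which is exactly what the reduction needs.

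The one genuinely delicate point is this final additive step: either optimality inequality used alone only gives $\norm{d}_2\le 2G(i-j)/(\mu i)$, or the analogous bound with $j$ — neither of which suffices, since $i,j\le i+j$. It is precisely summing the two asymmetric estimates (one carrying the weight $i\mu\norm d_2^2$, the other $j\mu\norm d_2^2$) that produces the sharper $i+j$ denominator. Everything else is routine: the variational inequalities, strong monotonicity of each $\nabla f_k$, and the uniform gradient bound $G$. The only bookkeeping care needed is to anchor each optimality condition at the correct minimizer ($x_j^\star$ for $g_j$, $x_i^\star$ for $g_i$) and to split the index sets consistently along $\{1,\dots,j\}\subset\{1,\dots,i\}$.
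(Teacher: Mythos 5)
Your argument is correct and reaches the same drift estimate $\norm{x_i^\star-x_j^\star}_2\le 2G(i-j)/(\mu(i+j))$ as the paper's intermediate Lemma~\ref{lemm:optdist}, but by a genuinely different route. The paper stays at the function-value level: it applies the one-sided strong-convexity inequality to $g_{i+j}$ at $x_{i+j}^\star$ to get $\tfrac{\mu}{2}\norm{d}^2\le g_{i+j}(x_i^\star)-g_{i+j}(x_{i+j}^\star)$, splits the right-hand side into a $g_i$ part and a sum over new indices, uses strong convexity of $g_i$ at $x_i^\star$ to absorb another $\tfrac{\mu}{2}\norm{d}^2$, and bounds the new-index part via the $G$-Lipschitz estimate on function values. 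You instead work entirely with gradients: the two variational inequalities, $\mu$-strong monotonicity of each $\nabla f_k$, and a Cauchy--Schwarz bound $\norm{\nabla f_k}\le G$. Both proofs hinge on exploiting the optimality of \emph{both} $x_i^\star$ and $x_j^\star$ to land the $(i+j)$ denominator, but your version is a clean pair of symmetric linear estimates added together, which reads a bit more transparently than the paper's single nested chain. The triangle-inequality reduction to $\norm{\hat{x}_j-x_i^\star}_2^2\le 2\norm{\hat{x}_j-x_j^\star}_2^2+2\norm{x_j^\star-x_i^\star}_2^2$ is identical in both.

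One inaccuracy in your motivating remark: you claim the first asymmetric estimate alone only gives $\norm{d}\le 2G(i-j)/(\mu i)$ and therefore does not suffice. But your own estimate $i\mu\norm{d}^2\le -\sum_{j<k\le i}\langle\nabla f_k(x_j^\star),d\rangle\le (i-j)G\norm{d}$ already yields $\norm{d}\le G(i-j)/(\mu i)$, which is \emph{sharper} than $2G(i-j)/(\mu(i+j))$ whenever $j<i$ (since $i>(i+j)/2$). So the addition of the two estimates is not needed to reach the lemma's bound; it actually loosens the constant. This does not invalidate your proof — the final inequality still holds — but it means the ``delicate'' step you highlight is dispensable, and it also shows the paper's Lemma~\ref{lemm:optdist} is not tight in its constant (the paper loses a comparable factor by applying the one-sided $\tfrac{\mu}{2}$ strong-convexity inequality twice rather than strong monotonicity once).
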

Now by applying Lemma~\ref{lemm:distbound2} for $j := \prev$ we get that
 \begin{eqnarray*}
            \norm{x_i^\star - \hat{x}_{\prev}}_2^2 &\leq& \frac{8}{\mu^2}\left(\frac{G(i-\prev)}{2\prev+(i-\prev)}\right)^2 +2\norm{\hat{x}_{\prev} - x^\star_{\prev}}^2\\
            &\leq& \frac{8}{\mu^2}\left(\frac{G(i-\prev)}{i}\right)^2 +2\norm{\hat{x}_{\prev} - x^\star_{\prev}}^2
\end{eqnarray*}
Step~$4$ and~$6$ of Algorithm~\ref{alg:1} ensure that at Step~$10$ of Algorithm~\ref{alg:1} (when Algorithm~\ref{alg:frequent} is called), $i-\prev \leq \alpha \cdot i$. Thus,
\begin{equation*}\label{eq:12}
\norm{x_i^\star - \hat{x}_{\prev}}_2^2 \leq \frac{8}{\mu^2}\left(\frac{G(i-\prev)}{i}\right)^2 +2\norm{\hat{x}_{\prev} - x^\star_{\prev}}^2 \leq \frac{32 G^2}{\mu^2}\alpha^2 +2\norm{\hat{x}_{\prev} - x^\star_{\prev}}^2
\end{equation*}
and thus $\bE\left[\norm{x_i^\star - \hat{x}_{\prev}}_2^2\right] \leq \frac{32 G^2}{\mu^2}\alpha^2 +2\bE\left[\norm{\hat{x}_{\prev} - x^\star_{\prev}}^2\right]$.
Combining the latter with Equation~\ref{eq:variance} we overall get,
\[\bE\left[\norm{\nabla_i^t-\nabla g_i(x_i^t)}_2^2\right] \leq \mathcal{O}\left(L^2\bE\left[\norm{x_i^t - x_i^\star}_2^2\right]+\frac{L^2G^2}{\mu^2}\alpha^2 +L^2\bE\left[\norm{\hat{x}_{\prev} - x^\star_{\prev}}^2\right]\right)\]
\vspace{-4mm}

\section{Experiments}
\label{section:experiments}
\begin{table}[ht]
\centering
\begin{tabular}{ccc}
\includegraphics[width=0.31\textwidth]{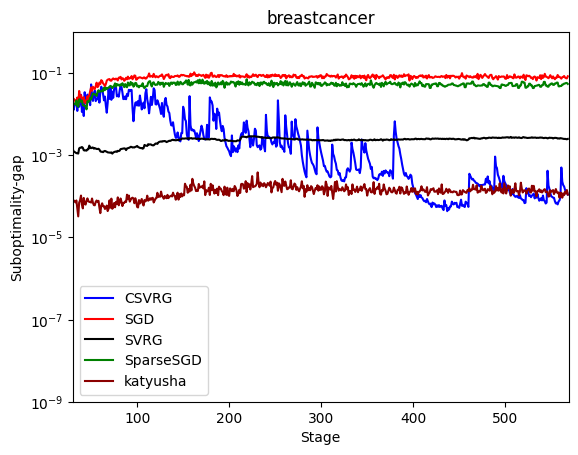} & \includegraphics[width=0.31\textwidth]{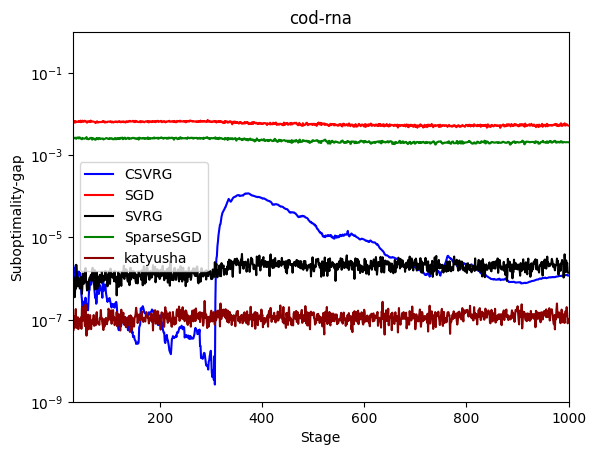}& \includegraphics[width=0.31\textwidth]{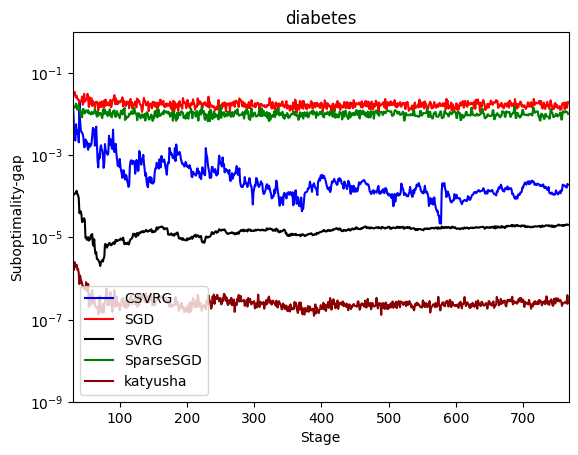} \\
\includegraphics[width=0.31\textwidth]{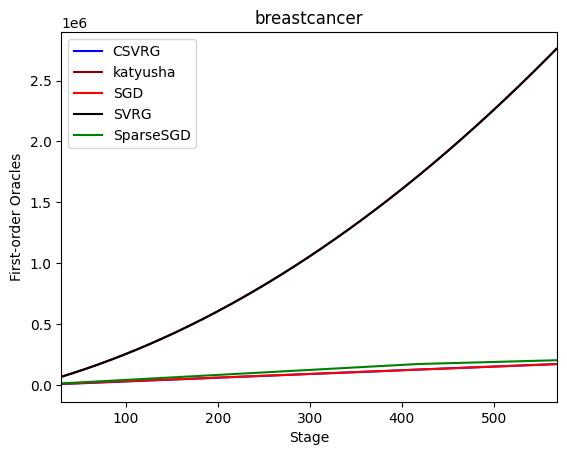} & \includegraphics[width=0.31\textwidth]{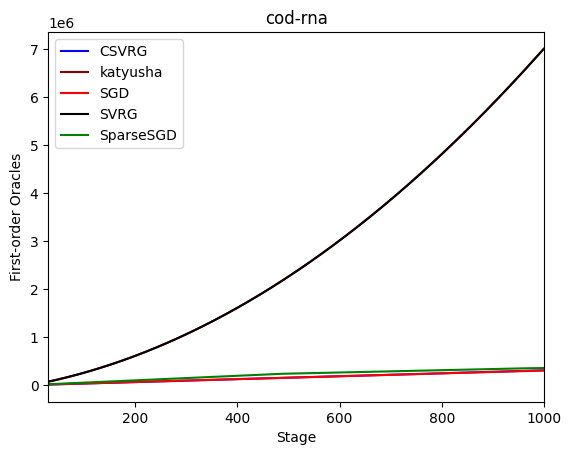} & \includegraphics[width=0.31\textwidth]{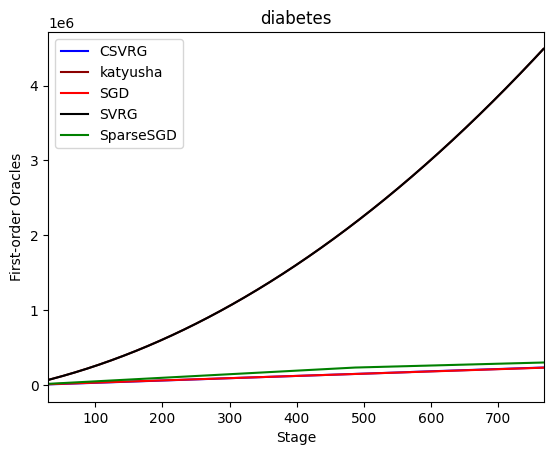}
\end{tabular}
\caption{Optimality gap as the stages progress on a ridge regression problem (averaged over $10$ independent runs). $\texttt{CSVRG}$ performs the exact same number of FOs with $\texttt{SGD}$ and slightly less than $\texttt{SGD}$-$\texttt{sparse}$. $\texttt{Katyusha}$ and $\texttt{SVRG}$ perform the exact same number of FOs. $\texttt{CSVRG}/\texttt{SGD}/\texttt{SGD}$-$\texttt{sparse}$ perform roughly $4\%$ of the FOs of $\texttt{Katyusha}/\texttt{SVRG}$.
\label{tab:experiments}}   
\end{table} 
\vspace{-3mm}
We experimentally evaluate the methods (\texttt{SGD},\texttt{SGD}-$\mathrm{sparse}$, \texttt{Katyusha}, \texttt{SVRG} and \texttt{CSVRG}) on a ridge regression task. Given some dataset $(a_i, b_i)_{i=1}^{n} \in \mathbb{R}^{d}\times \mathbb{R}$, at each stage $i\in [n]$ we consider the the finite sum objective $g_i(x) := \sum_{j=1}^{i} (a_j^\top x - b_j)^2/i + \lambda \norm{x}_2^2$ with $\lambda = 10^{-3}$. We choose the latter setting so as to be able to compute the exact optimal solution at each stage $i \in [n]$. 

For our experiments we use the datasets found in the LIBSVM package\cite{chang2011libsvm} for which we report our results. At each stage $i \in [n]$, we reveal a new data point $(a_i, b_i)$. In all of our experiments we run \texttt{CSVRG} with $\alpha =0.3$ and $T_i = 100$. The inner iterations of $\texttt{SGD}$ and $\texttt{SGD}$-$\texttt{sparse}$ are appropriately selected so that their overall FO calls match the FO calls of $\texttt{CSVRG}$. At each stage $i\in [n]$ we run \texttt{Katyusha}~\cite{allen2017katyusha} and \texttt{SVRG}~\cite{johnson2013accelerating} on the prefix-sum function $g_i(x)$ with $10$ outer iterations and $100$ inner iterations.   

In Table~\ref{tab:experiments} we present the error achieved by each method at each stage $i\in [n]$ and the overall number of FOs until the respective stage. As our experimental evaluations reveal, \texttt{CSVRG} nicely interpolates between $\texttt{SGD}$/$\texttt{SGD}$-$\mathrm{sparse}$ and $\mathrm{Katyusha}$/$\texttt{SVRG}$. More precisely, \texttt{CSVRG} achieves a significantly smaller suboptimality gap than $\texttt{SGD}$/$\texttt{SGD}$-$\mathrm{sparse}$ with the same number of FOs while it achieves a comparable one with $\texttt{Katyusha}$/$\texttt{SVRG}$ with way fewer FOs. In Appendix~\ref{app:expdetails} we present further experimental evaluations as well as the exact parameters used for each method.

\section{Conclusions}
\label{section:conclusions}
In this work we introduce \textit{continual finite-sum optimization}, a tweak of standard finite-sum minimization, that given a sequence of functions asks for a sequence of $\epsilon$-accurate solutions for the respective prefix-sum function. For the strongly convex case we propose a first-order method with $\mathcal{\tilde{O}}(n/\epsilon^{1/3})$ FO complexity. This significantly improves upon the $\mathcal{O}(n/\epsilon)$ FOs that $\mathrm{StochasticGradient Descent}$ requires and upon the $\mathcal{O}(n^2\log(1/\epsilon))$ FOs that VR methods require. We additionally prove that no \textit{natural method} can achieve $\mathcal{O}\left(n^{2-\alpha} \log (1/\epsilon)\right)$ FO complexity for any $\alpha >0$ and $\mathcal{O}\left(n /\epsilon^\alpha\right)$ FO complexity for $\alpha < 1/4$.  

\section{Acknowledgements}
Authors acknowledge the constructive feedback of reviewers and the work of ICLR'24 program and area chairs.  This work was supported by Hasler Foundation Program: Hasler Responsible
AI (project number 21043), by  the Swiss National Science Foundation (SNSF) under grant number 200021\_20501, by the Army Research Office (Grant Number W911NF-24-1-0048) and  by Innosuisse (contract agreement 100.960 IP-ICT).

\textbf{Limitations:} The theoretical guarantees of our proposed method are restricted to the strongly convex case. Extending our results beyond the strong convexity assumptions is a very promising research direction.

\textbf{Reproducibility Statement}
In the appendix we have included the formal proofs of all the theorems and lemmas provided in the main part of the paper. In order to facilitate the reproducibility of our theoretical results, for each theorem we have created a separate self-contained section presenting its proof. Concerning the experimental evaluations of our work, we provide the code used in our experiments as well as the selected parameters in each of the presented methods.

\textbf{Ethics Statement} The authors acknowledge that they have read and adhere to the ICLR Code of Ethics.

\bibliographystyle{plainnat}
\bibliography{ref.bib}

\begin{thebibliography}{63}
\providecommand{\natexlab}[1]{#1}
\providecommand{\url}[1]{\texttt{#1}}
\expandafter\ifx\csname urlstyle\endcsname\relax
  \providecommand{\doi}[1]{doi: #1}\else
  \providecommand{\doi}{doi: \begingroup \urlstyle{rm}\Url}\fi

\bibitem[Allen-Zhu(2017)]{allen2017katyusha}
Zeyuan Allen-Zhu.
\newblock Katyusha: The first direct acceleration of stochastic gradient methods.
\newblock In \emph{Proceedings of the 49th Annual ACM SIGACT Symposium on Theory of Computing}, pages 1200--1205, 2017.

\bibitem[Allen{-}Zhu(2018{\natexlab{a}})]{AZ18a}
Zeyuan Allen{-}Zhu.
\newblock Natasha 2: Faster non-convex optimization than {SGD}.
\newblock In Samy Bengio, Hanna~M. Wallach, Hugo Larochelle, Kristen Grauman, Nicol{\`{o}} Cesa{-}Bianchi, and Roman Garnett, editors, \emph{Advances in Neural Information Processing Systems 31: Annual Conference on Neural Information Processing Systems 2018, NeurIPS 2018, December 3-8, 2018, Montr{\'{e}}al, Canada}, pages 2680--2691, 2018{\natexlab{a}}.

\bibitem[Allen{-}Zhu(2018{\natexlab{b}})]{KatyushaXzhu}
Zeyuan Allen{-}Zhu.
\newblock Katyusha {X:} practical momentum method for stochastic sum-of-nonconvex optimization.
\newblock \emph{CoRR}, abs/1802.03866, 2018{\natexlab{b}}.
\newblock URL \url{http://arxiv.org/abs/1802.03866}.

\bibitem[Allen-Zhu and Yuan(2016)]{allen2016improved}
Zeyuan Allen-Zhu and Yang Yuan.
\newblock Improved svrg for non-strongly-convex or sum-of-non-convex objectives.
\newblock In \emph{International conference on machine learning}, pages 1080--1089. PMLR, 2016.

\bibitem[Blatt et~al.(2007)Blatt, Hero, and Gauchman]{blatt2007convergent}
Doron Blatt, Alfred~O Hero, and Hillel Gauchman.
\newblock A convergent incremental gradient method with a constant step size.
\newblock \emph{SIAM Journal on Optimization}, 18\penalty0 (1):\penalty0 29--51, 2007.

\bibitem[Bottou and Bousquet(2007)]{BB07}
L{\'{e}}on Bottou and Olivier Bousquet.
\newblock The tradeoffs of large scale learning.
\newblock In \emph{Advances in Neural Information Processing Systems 20, Proceedings of the Twenty-First Annual Conference on Neural Information Processing Systems}, pages 161--168. Curran Associates, Inc., 2007.

\bibitem[Boyd and Vandenberghe(2004)]{boyd2004convex}
Stephen~P Boyd and Lieven Vandenberghe.
\newblock \emph{Convex optimization}.
\newblock Cambridge university press, 2004.

\bibitem[Bubeck(2015)]{B15convex}
Sébastien Bubeck.
\newblock Convex optimization: Algorithms and complexity, 2015.

\bibitem[Castro et~al.(2018)Castro, Mar{\'\i}n-Jim{\'e}nez, Guil, Schmid, and Alahari]{castro2018end}
Francisco~M Castro, Manuel~J Mar{\'\i}n-Jim{\'e}nez, Nicol{\'a}s Guil, Cordelia Schmid, and Karteek Alahari.
\newblock End-to-end incremental learning.
\newblock In \emph{Proceedings of the European conference on computer vision (ECCV)}, pages 233--248, 2018.

\bibitem[Chang and Lin(2011)]{chang2011libsvm}
Chih-Chung Chang and Chih-Jen Lin.
\newblock Libsvm: a library for support vector machines.
\newblock \emph{ACM transactions on intelligent systems and technology (TIST)}, 2\penalty0 (3):\penalty0 1--27, 2011.

\bibitem[Defazio et~al.(2014)Defazio, Bach, and Lacoste{-}Julien]{DBL14}
Aaron Defazio, Francis~R. Bach, and Simon Lacoste{-}Julien.
\newblock {SAGA:} {A} fast incremental gradient method with support for non-strongly convex composite objectives.
\newblock In Zoubin Ghahramani, Max Welling, Corinna Cortes, Neil~D. Lawrence, and Kilian~Q. Weinberger, editors, \emph{Advances in Neural Information Processing Systems 27: Annual Conference on Neural Information Processing Systems 2014, December 8-13 2014, Montreal, Quebec, Canada}, pages 1646--1654, 2014.

\bibitem[Delyon and Juditsky(1993)]{delyon1993accelerated}
Bernard Delyon and Anatoli Juditsky.
\newblock Accelerated stochastic approximation.
\newblock \emph{SIAM Journal on Optimization}, 3\penalty0 (4):\penalty0 868--881, 1993.

\bibitem[Dubois{-}Taine et~al.(2022)Dubois{-}Taine, Vaswani, Babanezhad, Schmidt, and Lacoste{-}Julien]{DVBS22}
Benjamin Dubois{-}Taine, Sharan Vaswani, Reza Babanezhad, Mark Schmidt, and Simon Lacoste{-}Julien.
\newblock {SVRG} meets adagrad: painless variance reduction.
\newblock \emph{Mach. Learn.}, 111\penalty0 (12):\penalty0 4359--4409, 2022.

\bibitem[Dubois-Taine et~al.(2022)Dubois-Taine, Vaswani, Babanezhad, Schmidt, and Lacoste-Julien]{dubois2022svrg}
Benjamin Dubois-Taine, Sharan Vaswani, Reza Babanezhad, Mark Schmidt, and Simon Lacoste-Julien.
\newblock Svrg meets adagrad: painless variance reduction.
\newblock \emph{Machine Learning}, pages 1--51, 2022.

\bibitem[Fang et~al.(2018{\natexlab{a}})Fang, Li, Lin, and Zhang]{FLLZ18}
Cong Fang, Chris~Junchi Li, Zhouchen Lin, and Tong Zhang.
\newblock {SPIDER:} near-optimal non-convex optimization via stochastic path-integrated differential estimator.
\newblock In Samy Bengio, Hanna~M. Wallach, Hugo Larochelle, Kristen Grauman, Nicol{\`{o}} Cesa{-}Bianchi, and Roman Garnett, editors, \emph{Advances in Neural Information Processing Systems 31: Annual Conference on Neural Information Processing Systems 2018, NeurIPS 2018, December 3-8, 2018, Montr{\'{e}}al, Canada}, pages 687--697, 2018{\natexlab{a}}.

\bibitem[Fang et~al.(2018{\natexlab{b}})Fang, Li, Lin, and Zhang]{fang2018spider}
Cong Fang, Chris~Junchi Li, Zhouchen Lin, and Tong Zhang.
\newblock Spider: Near-optimal non-convex optimization via stochastic path-integrated differential estimator.
\newblock \emph{Advances in Neural Information Processing Systems}, 31, 2018{\natexlab{b}}.

\bibitem[Goodfellow et~al.(2014)Goodfellow, Mirza, Da, Courville, and Bengio]{GMDCB13}
Ian~J. Goodfellow, Mehdi Mirza, Xia Da, Aaron~C. Courville, and Yoshua Bengio.
\newblock An empirical investigation of catastrophic forgeting in gradient-based neural networks.
\newblock In Yoshua Bengio and Yann LeCun, editors, \emph{2nd International Conference on Learning Representations, {ICLR} 2014, Banff, AB, Canada, April 14-16, 2014, Conference Track Proceedings}, 2014.

\bibitem[Hazan(2023)]{H16}
Elad Hazan.
\newblock Introduction to online convex optimization, 2023.

\bibitem[Hersche et~al.(2022)Hersche, Karunaratne, Cherubini, Benini, Sebastian, and Rahimi]{H22}
Michael Hersche, Geethan Karunaratne, Giovanni Cherubini, Luca Benini, Abu Sebastian, and Abbas Rahimi.
\newblock Constrained few-shot class-incremental learning.
\newblock In \emph{Proceedings of the IEEE/CVF Conference on Computer Vision and Pattern Recognition}, pages 9057--9067, 2022.

\bibitem[Johnson and Zhang(2013)]{johnson2013accelerating}
Rie Johnson and Tong Zhang.
\newblock Accelerating stochastic gradient descent using predictive variance reduction.
\newblock \emph{Advances in neural information processing systems}, 26, 2013.

\bibitem[Kavis et~al.(2022)Kavis, Skoulakis, Antonakopoulos, Dadi, and Cevher]{KSADC22}
Ali Kavis, Stratis Skoulakis, Kimon Antonakopoulos, Leello~Tadesse Dadi, and Volkan Cevher.
\newblock Adaptive stochastic variance reduction for non-convex finite-sum minimization.
\newblock In \emph{NeurIPS}, 2022.

\bibitem[Kesten(1958)]{kesten1958accelerated}
Harry Kesten.
\newblock Accelerated stochastic approximation.
\newblock \emph{The Annals of Mathematical Statistics}, pages 41--59, 1958.

\bibitem[Kiefer and Wolfowitz(1952)]{kiefer1952stochastic}
Jack Kiefer and Jacob Wolfowitz.
\newblock Stochastic estimation of the maximum of a regression function.
\newblock \emph{The Annals of Mathematical Statistics}, pages 462--466, 1952.

\bibitem[Kirkpatrick et~al.(2017)Kirkpatrick, Pascanu, Rabinowitz, Veness, Desjardins, Rusu, Milan, Quan, Ramalho, Grabska-Barwinska, et~al.]{kirkpatrick2017overcoming}
James Kirkpatrick, Razvan Pascanu, Neil Rabinowitz, Joel Veness, Guillaume Desjardins, Andrei~A Rusu, Kieran Milan, John Quan, Tiago Ramalho, Agnieszka Grabska-Barwinska, et~al.
\newblock Overcoming catastrophic forgetting in neural networks.
\newblock \emph{Proceedings of the national academy of sciences}, 114\penalty0 (13):\penalty0 3521--3526, 2017.

\bibitem[Kowshik et~al.(2021{\natexlab{a}})Kowshik, Nagaraj, Jain, and Netrapalli]{kowshik2021near}
Suhas Kowshik, Dheeraj Nagaraj, Prateek Jain, and Praneeth Netrapalli.
\newblock Near-optimal offline and streaming algorithms for learning non-linear dynamical systems.
\newblock \emph{Advances in Neural Information Processing Systems}, 34:\penalty0 8518--8531, 2021{\natexlab{a}}.

\bibitem[Kowshik et~al.(2021{\natexlab{b}})Kowshik, Nagaraj, Jain, and Netrapalli]{kowshik2021streaming}
Suhas Kowshik, Dheeraj Nagaraj, Prateek Jain, and Praneeth Netrapalli.
\newblock Streaming linear system identification with reverse experience replay.
\newblock \emph{Advances in Neural Information Processing Systems}, 34:\penalty0 30140--30152, 2021{\natexlab{b}}.

\bibitem[Lan and Zhou(2018)]{lan2018optimal}
Guanghui Lan and Yi~Zhou.
\newblock An optimal randomized incremental gradient method.
\newblock \emph{Mathematical programming}, 171:\penalty0 167--215, 2018.

\bibitem[Lan et~al.(2019{\natexlab{a}})Lan, Li, and Zhou]{LZ19}
Guanghui Lan, Zhize Li, and Yi~Zhou.
\newblock A unified variance-reduced accelerated gradient method for convex optimization.
\newblock In Hanna~M. Wallach, Hugo Larochelle, Alina Beygelzimer, Florence d'Alch{\'{e}}{-}Buc, Emily~B. Fox, and Roman Garnett, editors, \emph{Advances in Neural Information Processing Systems 32: Annual Conference on Neural Information Processing Systems 2019, NeurIPS 2019, December 8-14, 2019, Vancouver, BC, Canada}, pages 10462--10472, 2019{\natexlab{a}}.

\bibitem[Lan et~al.(2019{\natexlab{b}})Lan, Li, and Zhou]{lan2019unified}
Guanghui Lan, Zhize Li, and Yi~Zhou.
\newblock A unified variance-reduced accelerated gradient method for convex optimization.
\newblock \emph{Advances in Neural Information Processing Systems}, 32, 2019{\natexlab{b}}.

\bibitem[Li and Li(2018)]{LL18}
Zhize Li and Jian Li.
\newblock A simple proximal stochastic gradient method for nonsmooth nonconvex optimization.
\newblock In Samy Bengio, Hanna~M. Wallach, Hugo Larochelle, Kristen Grauman, Nicol{\`{o}} Cesa{-}Bianchi, and Roman Garnett, editors, \emph{Advances in Neural Information Processing Systems 31: Annual Conference on Neural Information Processing Systems 2018, NeurIPS 2018, December 3-8, 2018, Montr{\'{e}}al, Canada}, pages 5569--5579, 2018.

\bibitem[Li and Richt{\'{a}}rik(2021)]{LP21}
Zhize Li and Peter Richt{\'{a}}rik.
\newblock Zerosarah: Efficient nonconvex finite-sum optimization with zero full gradient computation.
\newblock \emph{CoRR}, abs/2103.01447, 2021.

\bibitem[Li et~al.(2021)Li, Bao, Zhang, and Richt{\'{a}}rik]{LR21}
Zhize Li, Hongyan Bao, Xiangliang Zhang, and Peter Richt{\'{a}}rik.
\newblock {PAGE:} {A} simple and optimal probabilistic gradient estimator for nonconvex optimization.
\newblock In Marina Meila and Tong Zhang, editors, \emph{Proceedings of the 38th International Conference on Machine Learning, {ICML} 2021, 18-24 July 2021, Virtual Event}, volume 139 of \emph{Proceedings of Machine Learning Research}, pages 6286--6295. {PMLR}, 2021.

\bibitem[Lin et~al.(2018)Lin, Mairal, and Harchaoui]{lin2018catalyst}
Hongzhou Lin, Julien Mairal, and Zaid Harchaoui.
\newblock Catalyst acceleration for first-order convex optimization: from theory to practice.
\newblock \emph{Journal of Machine Learning Research}, 18\penalty0 (1):\penalty0 7854--7907, 2018.

\bibitem[Lin et~al.(2015)Lin, Lu, and Xiao]{lin2015accelerated}
Qihang Lin, Zhaosong Lu, and Lin Xiao.
\newblock An accelerated randomized proximal coordinate gradient method and its application to regularized empirical risk minimization.
\newblock \emph{SIAM Journal on Optimization}, 25\penalty0 (4):\penalty0 2244--2273, 2015.

\bibitem[Lopez-Paz and Ranzato(2017)]{lopez2017gradient}
David Lopez-Paz and Marc'Aurelio Ranzato.
\newblock Gradient episodic memory for continual learning.
\newblock \emph{Advances in neural information processing systems}, 30, 2017.

\bibitem[McCloskey and Cohen(1989)]{mccloskey1989catastrophic}
Michael McCloskey and Neal~J Cohen.
\newblock Catastrophic interference in connectionist networks: The sequential learning problem.
\newblock In \emph{Psychology of learning and motivation}, volume~24, pages 109--165. Elsevier, 1989.

\bibitem[Mermillod et~al.(2013)Mermillod, Bugaiska, and Bonin]{mermillod2013stability}
Martial Mermillod, Aur{\'e}lia Bugaiska, and Patrick Bonin.
\newblock The stability-plasticity dilemma: Investigating the continuum from catastrophic forgetting to age-limited learning effects, 2013.

\bibitem[Mokhtari et~al.(2017)Mokhtari, G{\"{u}}rb{\"{u}}zbalaban, and Ribeiro]{MGR17}
Aryan Mokhtari, Mert G{\"{u}}rb{\"{u}}zbalaban, and Alejandro Ribeiro.
\newblock A double incremental aggregated gradient method with linear convergence rate for large-scale optimization.
\newblock In \emph{2017 {IEEE} International Conference on Acoustics, Speech and Signal Processing, {ICASSP} 2017, New Orleans, LA, USA, March 5-9, 2017}, pages 4696--4700. {IEEE}, 2017.

\bibitem[Nagaraj et~al.(2020)Nagaraj, Wu, Bresler, Jain, and Netrapalli]{nagaraj2020least}
Dheeraj Nagaraj, Xian Wu, Guy Bresler, Prateek Jain, and Praneeth Netrapalli.
\newblock Least squares regression with markovian data: Fundamental limits and algorithms.
\newblock \emph{Advances in neural information processing systems}, 33:\penalty0 16666--16676, 2020.

\bibitem[Nesterov(2014)]{N14}
Yurii Nesterov.
\newblock \emph{Introductory Lectures on Convex Optimization: A Basic Course}.
\newblock Springer Publishing Company, Incorporated, 1 edition, 2014.
\newblock ISBN 1461346916.

\bibitem[Nguyen et~al.(2017{\natexlab{a}})Nguyen, Li, Bui, and Turner]{nguyen2017variational}
Cuong~V Nguyen, Yingzhen Li, Thang~D Bui, and Richard~E Turner.
\newblock Variational continual learning.
\newblock \emph{arXiv preprint arXiv:1710.10628}, 2017{\natexlab{a}}.

\bibitem[Nguyen et~al.(2017{\natexlab{b}})Nguyen, Liu, Scheinberg, and Tak{\'{a}}c]{NLST17}
Lam~M. Nguyen, Jie Liu, Katya Scheinberg, and Martin Tak{\'{a}}c.
\newblock {SARAH:} {A} novel method for machine learning problems using stochastic recursive gradient.
\newblock In Doina Precup and Yee~Whye Teh, editors, \emph{Proceedings of the 34th International Conference on Machine Learning, {ICML} 2017, Sydney, NSW, Australia, 6-11 August 2017}, volume~70 of \emph{Proceedings of Machine Learning Research}, pages 2613--2621. {PMLR}, 2017{\natexlab{b}}.

\bibitem[Pham et~al.(2020)Pham, Nguyen, Phan, and Tran{-}Dinh]{PNPT20}
Nhan~H. Pham, Lam~M. Nguyen, Dzung~T. Phan, and Quoc Tran{-}Dinh.
\newblock Proxsarah: An efficient algorithmic framework for stochastic composite nonconvex optimization.
\newblock \emph{J. Mach. Learn. Res.}, 21:\penalty0 110:1--110:48, 2020.

\bibitem[Reddi et~al.(2016)Reddi, Hefny, Sra, P{\'{o}}czos, and Smola]{RHSPS16}
Sashank~J. Reddi, Ahmed Hefny, Suvrit Sra, Barnab{\'{a}}s P{\'{o}}czos, and Alexander~J. Smola.
\newblock Stochastic variance reduction for nonconvex optimization.
\newblock In Maria{-}Florina Balcan and Kilian~Q. Weinberger, editors, \emph{Proceedings of the 33nd International Conference on Machine Learning, {ICML} 2016, New York City, NY, USA, June 19-24, 2016}, volume~48 of \emph{{JMLR} Workshop and Conference Proceedings}, pages 314--323. JMLR.org, 2016.

\bibitem[Robbins and Monro(1951)]{robbins1951stochastic}
Herbert Robbins and Sutton Monro.
\newblock A stochastic approximation method.
\newblock \emph{The annals of mathematical statistics}, pages 400--407, 1951.

\bibitem[Rosenfeld and Tsotsos(2018)]{RT18incremental}
Amir Rosenfeld and John~K Tsotsos.
\newblock Incremental learning through deep adaptation.
\newblock \emph{IEEE transactions on pattern analysis and machine intelligence}, 42\penalty0 (3):\penalty0 651--663, 2018.

\bibitem[Roux et~al.(2012{\natexlab{a}})Roux, Schmidt, and Bach]{roux2012stochastic}
Nicolas Roux, Mark Schmidt, and Francis Bach.
\newblock A stochastic gradient method with an exponential convergence \_rate for finite training sets.
\newblock \emph{Advances in neural information processing systems}, 25, 2012{\natexlab{a}}.

\bibitem[Roux et~al.(2012{\natexlab{b}})Roux, Schmidt, and Bach]{RSB12}
Nicolas~Le Roux, Mark Schmidt, and Francis~R. Bach.
\newblock A stochastic gradient method with an exponential convergence rate for finite training sets.
\newblock In Peter~L. Bartlett, Fernando C.~N. Pereira, Christopher J.~C. Burges, L{\'{e}}on Bottou, and Kilian~Q. Weinberger, editors, \emph{Advances in Neural Information Processing Systems 25: 26th Annual Conference on Neural Information Processing Systems 2012. Proceedings of a meeting held December 3-6, 2012, Lake Tahoe, Nevada, United States}, pages 2672--2680, 2012{\natexlab{b}}.

\bibitem[Ruvolo and Eaton(2013)]{ruvolo2013ella}
Paul Ruvolo and Eric Eaton.
\newblock Ella: An efficient lifelong learning algorithm.
\newblock In \emph{International conference on machine learning}, pages 507--515. PMLR, 2013.

\bibitem[Shalev-Shwartz and Zhang(2013{\natexlab{a}})]{shalev2013accelerated}
Shai Shalev-Shwartz and Tong Zhang.
\newblock Accelerated mini-batch stochastic dual coordinate ascent.
\newblock \emph{Advances in Neural Information Processing Systems}, 26, 2013{\natexlab{a}}.

\bibitem[Shalev-Shwartz and Zhang(2013{\natexlab{b}})]{shalev2013stochastic}
Shai Shalev-Shwartz and Tong Zhang.
\newblock Stochastic dual coordinate ascent methods for regularized loss minimization.
\newblock \emph{Journal of Machine Learning Research}, 14\penalty0 (1), 2013{\natexlab{b}}.

\bibitem[Shalev-Shwartz et~al.(2010)Shalev-Shwartz, Shamir, Srebro, and Sridharan]{SS10}
Shai Shalev-Shwartz, Ohad Shamir, Nathan Srebro, and Karthik Sridharan.
\newblock Learnability, stability and uniform convergence.
\newblock \emph{J. Mach. Learn. Res.}, 11:\penalty0 2635--2670, 2010.

\bibitem[Sodhani et~al.(2020)Sodhani, Chandar, and Bengio]{sodhani2020toward}
Shagun Sodhani, Sarath Chandar, and Yoshua Bengio.
\newblock Toward training recurrent neural networks for lifelong learning.
\newblock \emph{Neural computation}, 32\penalty0 (1):\penalty0 1--35, 2020.

\bibitem[Sodhani et~al.(2022)Sodhani, Faramarzi, Mehta, Malviya, Abdelsalam, Janarthanan, and Chandar]{sodhani2022introduction}
Shagun Sodhani, Mojtaba Faramarzi, Sanket~Vaibhav Mehta, Pranshu Malviya, Mohamed Abdelsalam, Janarthanan Janarthanan, and Sarath Chandar.
\newblock An introduction to lifelong supervised learning.
\newblock \emph{arXiv preprint arXiv:2207.04354}, 2022.

\bibitem[Song et~al.(2020{\natexlab{a}})Song, Jiang, and Ma]{SJM20}
Chaobing Song, Yong Jiang, and Yi~Ma.
\newblock Variance reduction via accelerated dual averaging for finite-sum optimization.
\newblock In Hugo Larochelle, Marc'Aurelio Ranzato, Raia Hadsell, Maria{-}Florina Balcan, and Hsuan{-}Tien Lin, editors, \emph{Advances in Neural Information Processing Systems 33: Annual Conference on Neural Information Processing Systems 2020, NeurIPS 2020, December 6-12, 2020, virtual}, 2020{\natexlab{a}}.

\bibitem[Song et~al.(2020{\natexlab{b}})Song, Jiang, and Ma]{song2020variance}
Chaobing Song, Yong Jiang, and Yi~Ma.
\newblock Variance reduction via accelerated dual averaging for finite-sum optimization.
\newblock \emph{Advances in Neural Information Processing Systems}, 33:\penalty0 833--844, 2020{\natexlab{b}}.

\bibitem[Tulving(1985)]{tulving1985many}
Endel Tulving.
\newblock How many memory systems are there?
\newblock \emph{American psychologist}, 40\penalty0 (4):\penalty0 385, 1985.

\bibitem[Wang et~al.(2019)Wang, Ji, Zhou, Liang, and Tarokh]{WJZLT19}
Zhe Wang, Kaiyi Ji, Yi~Zhou, Yingbin Liang, and Vahid Tarokh.
\newblock Spiderboost and momentum: Faster variance reduction algorithms.
\newblock In Hanna~M. Wallach, Hugo Larochelle, Alina Beygelzimer, Florence d'Alch{\'{e}}{-}Buc, Emily~B. Fox, and Roman Garnett, editors, \emph{Advances in Neural Information Processing Systems 32: Annual Conference on Neural Information Processing Systems 2019, NeurIPS 2019, December 8-14, 2019, Vancouver, BC, Canada}, pages 2403--2413, 2019.

\bibitem[Xiao and Zhang(2014)]{X14}
Lin Xiao and Tong Zhang.
\newblock A proximal stochastic gradient method with progressive variance reduction.
\newblock \emph{{SIAM} J. Optim.}, 24\penalty0 (4):\penalty0 2057--2075, 2014.

\bibitem[Yoon et~al.(2017)Yoon, Yang, Lee, and Hwang]{yoon2017lifelong}
Jaehong Yoon, Eunho Yang, Jeongtae Lee, and Sung~Ju Hwang.
\newblock Lifelong learning with dynamically expandable networks.
\newblock \emph{arXiv preprint arXiv:1708.01547}, 2017.

\bibitem[Zhang and Xiao(2019)]{zhang2019stochastic}
Junyu Zhang and Lin Xiao.
\newblock A stochastic composite gradient method with incremental variance reduction.
\newblock \emph{Advances in Neural Information Processing Systems}, 32, 2019.

\bibitem[Zhang et~al.(2013)Zhang, Mahdavi, and Jin]{zhang2013linear}
Lijun Zhang, Mehrdad Mahdavi, and Rong Jin.
\newblock Linear convergence with condition number independent access of full gradients.
\newblock \emph{Advances in Neural Information Processing Systems}, 26, 2013.

\bibitem[Zhou et~al.(2018)Zhou, Xu, and Gu]{ZXG18}
Dongruo Zhou, Pan Xu, and Quanquan Gu.
\newblock Stochastic nested variance reduced gradient descent for nonconvex optimization.
\newblock In Samy Bengio, Hanna~M. Wallach, Hugo Larochelle, Kristen Grauman, Nicol{\`{o}} Cesa{-}Bianchi, and Roman Garnett, editors, \emph{Advances in Neural Information Processing Systems 31: Annual Conference on Neural Information Processing Systems 2018, NeurIPS 2018, December 3-8, 2018, Montr{\'{e}}al, Canada}, pages 3925--3936, 2018.

\end{thebibliography}
\clearpage

\appendix
\section{Further Related Work}
\label{app:relatedwork}
In this section we present more extensively all work related to ours. This work lies in the intersection of convex finite sum minimization and incremental learning.

\textbf{Finite Sum Minimization:} As already mentioned, our work is closely related with the long line of research on variance reduction methods for strongly convex~\cite{NLST17,DBL14,KatyushaXzhu,allen2017katyusha,RSB12,X14,MGR17,johnson2013accelerating,allen2017katyusha,KatyushaXzhu,zhang2013linear,shalev2013accelerated,NLST17,DBL14,KatyushaXzhu,allen2017katyusha,RSB12,X14}, convex~\cite{lan2019unified,DVBS22,SJM20,LZ19,song2020variance,lin2018catalyst,delyon1993accelerated,lin2015accelerated,shalev2013stochastic,dubois2022svrg} and non-convex~\cite{allen2016improved,AZ18a,WJZLT19,PNPT20,LP21,LR21,FLLZ18,ZXG18,RHSPS16,LL18,KSADC22} finite-sum minimization (see also Appendix~\ref{app:relatedwork} for a more detailed discussion). As already mentioned the basic difference with our work comes from the fact these work concern the standard finite-sum setting while ours concerns its continual counter-part. 

Our work leverages two components of variance reduction techniques. Recent works in variance reduction such as the SVRG algorithm~\cite{johnson2013accelerating} and the Katyusha algorithm~\cite{allen2017katyusha,KatyushaXzhu} primarily leverage a full gradient  computation, which is essentially used as a substitute for the gradients that are not computed in this iteration, in order to reduce the variance of their gradient estimator. Older methods, which seek to accelerate SGD, such as the RPDG method \cite{lan2018optimal} and the SAG method \cite{roux2012stochastic}, which is a randomized variant of the IAG method \cite{blatt2007convergent}, use a stochastic average of the current gradient that it sampled and substitutes the rest of the gradients with ones computed from previous iterations. Our method seeks to utilize the first technique, but the complexity constraint does not allow for this tool. We mitigate this problem using a technique similar to the incremental gradient methods. There are also several other methods and techniques that relate to accelerated stochastic gradient methods, both in the convex setting \cite{song2020variance,allen2016improved,lan2019unified,lin2018catalyst,kesten1958accelerated,delyon1993accelerated,zhang2013linear,lin2015accelerated,shalev2013stochastic,shalev2013accelerated,dubois2022svrg} as well as in the non-convex setting \cite{zhang2019stochastic,fang2018spider}. In the past there have also been several works that focus on finite sum minimization in dynamic settings using SGD \cite{kowshik2021streaming,kowshik2021near,nagaraj2020least}

\textbf{Incremental Learning:} The other side of this work is incremental learning, the goal of which is adapting a model to new information, data or tasks without forgetting the ones that it was trained on earlier. The phenomenon of losing sight of old information is called \textit{catastrophic forgetting}~\cite{castro2018end,GMDCB13,kirkpatrick2017overcoming,mccloskey1989catastrophic} and it is one of the main challenges of incremental learning. The area closest to our work from the field of incremental learning is lifelong learning. In this area catastrophic forgetting is expressed in the "stability-plasticity dilemma" \cite{mermillod2013stability}. There have been three main approaches to tackle this issue, regularization based approaches \cite{nguyen2017variational}, memory based approaches \cite{tulving1985many} and architecture based approaches \cite{yoon2017lifelong,lopez2017gradient}. As shown in the current literature, combinations of these methods can be utilized in practice \cite{sodhani2020toward} as well. All of the previous directions are studied in depth in \cite{sodhani2022introduction} and a brief overview of the techniques of the field is also presented in \cite{castro2018end}. The area of lifelong learning also includes algorithms such as ELLA~\cite{ruvolo2013ella}

\section{Proof of Lemma~\ref{lemm:distbound2}}
\label{app:cons-optimum}
In this section we provide the proof of Lemma~\ref{lemm:distbound2}.
\distancebound*
The proof of Lemma~\ref{lemm:distbound2} follows by the proof of Lemma~\ref{lemm:optdist} that we state and prove up next. The proof of Lemma~\ref{lemm:distbound2} is deferred at the end of this section.

\begin{restatable}{lemma}{optdist}\label{lemm:optdist}
    For all $i \in [n-1]$ and $j \in [n-i]$,
    \[\norm{x^\star_{i+j}-x^\star_i}_2\leq \frac{2jG}{\mu(2i+j)}\]
where $x_i^\star = \argmin_{x \in \cD} g_i(x)$ and  $x_{i+j}^\star = \argmin_{x \in \cD} g_{i+j}(x)$. 
\end{restatable}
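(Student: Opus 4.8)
The starting point is that the prefix function at stage $i+j$ is, up to scaling, the sum of the prefix function at stage $i$ and the block of fresh functions: writing $S(x) := \sum_{k=i+1}^{i+j} f_k(x)$ we have the identity $(i+j)\,g_{i+j}(x) = i\,g_i(x) + S(x)$. Since each $f_k$ is $\mu$-strongly convex and $G$-Lipschitz, both $g_i$ and $g_{i+j}$ are $\mu$-strongly convex, while $S$ is $jG$-Lipschitz. The plan is to squeeze $r := \norm{x_i^\star - x_{i+j}^\star}$ between a lower and an upper bound on the suboptimality gap $g_{i+j}(x_i^\star) - g_{i+j}(x_{i+j}^\star)$.

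First I would lower-bound this gap. Since $x_{i+j}^\star$ minimizes $g_{i+j}$ over the convex set $\cD$, the variational inequality $\nabla g_{i+j}(x_{i+j}^\star)^\top(x_i^\star - x_{i+j}^\star)\geq 0$ holds, and plugging it into the $\mu$-strong convexity inequality \eqref{eq:StrongConvex} for $g_{i+j}$ at $y = x_{i+j}^\star$, $x = x_i^\star$ gives $g_{i+j}(x_i^\star) - g_{i+j}(x_{i+j}^\star) \geq \tfrac{\mu}{2} r^2$.

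Next I would upper-bound the same gap through the decomposition
\[
(i+j)\big(g_{i+j}(x_i^\star) - g_{i+j}(x_{i+j}^\star)\big) = i\big(g_i(x_i^\star) - g_i(x_{i+j}^\star)\big) + \big(S(x_i^\star) - S(x_{i+j}^\star)\big).
\]
The key step — and the source of the denominator $2i+j$ rather than the weaker $i+j$ — is not to merely discard the first term as nonpositive, but to invoke $\mu$-strong convexity of $g_i$ together with optimality of $x_i^\star$ over $\cD$ to get $g_i(x_i^\star) - g_i(x_{i+j}^\star) \leq -\tfrac{\mu}{2} r^2$. The second term is at most $jG\,r$ by the Lipschitz bound on $S$. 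Combining, $g_{i+j}(x_i^\star) - g_{i+j}(x_{i+j}^\star) \leq \tfrac{1}{i+j}\big(-i\tfrac{\mu}{2}r^2 + jGr\big)$.

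Finally I would chain the two bounds: $(i+j)\tfrac{\mu}{2}r^2 \leq -i\tfrac{\mu}{2}r^2 + jGr$, i.e. $(2i+j)\tfrac{\mu}{2}r^2 \leq jGr$, which after dividing by $r$ (the case $r=0$ being trivial) yields $r \leq \tfrac{2jG}{\mu(2i+j)}$, exactly the claim. I do not expect any genuine obstacle here; the only points requiring care are handling the constrained minimization via the variational inequality instead of $\nabla g = 0$, and remembering to use strong convexity on \emph{both} $g_i$ and $g_{i+j}$ — using it only once already gives the looser bound $2jG/(\mu(i+j))$. Lemma~\ref{lemm:distbound2} then follows by taking $i \mapsto j$, $j \mapsto i-j$ and combining this bound with $\norm{\hat{x}_j - x_i^\star} \le \norm{\hat{x}_j - x_j^\star} + \norm{x_j^\star - x_i^\star}$ and $(a+b)^2 \le 2a^2 + 2b^2$.
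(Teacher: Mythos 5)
Your proposal is correct and matches the paper's proof essentially step for step: both establish the lower bound $\tfrac{\mu}{2}r^2 \le g_{i+j}(x_i^\star)-g_{i+j}(x_{i+j}^\star)$ via the variational inequality plus strong convexity of $g_{i+j}$, use the same weighted decomposition of $g_{i+j}$ into $g_i$ and the block $\sum_{k=i+1}^{i+j}f_k$, apply strong convexity and optimality a second time to $g_i$ to get the $-\tfrac{i\mu}{2(i+j)}r^2$ term (the source of the $2i+j$ denominator), and bound the fresh block via $G$-Lipschitzness. No gaps.
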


\begin{proof}[Proof of Lemma~\ref{lemm:optdist}]
    By the optimality of $x_{i+j}^\star\in \mathcal{D}$ for the function $g_{i+j}(x)$ we get that  
    
    \[\langle \nabla g_{i+j}(x_{i+j}^\star),x_i^\star-x_{i+j}^\star \rangle\geq 0\]

    The inequality is shown in \citep[equation 4.21]{boyd2004convex}.
    
    From the strong convexity of $g_{i+j}(x)$ we get that
    \begin{eqnarray*}
        \frac{\mu}{2}\norm{x_{i+j}^\star-x_i^\star}^2_2 &\leq& g_{i+j}(x_i^\star) - g_{i+j}(x_{i+j}^\star)- \langle \nabla g_{i+j}(x_{i+j}^\star),x_i^\star-x_{i+j}^\star\rangle\\
        &\leq& g_{i+j}(x_i^\star) -g_{i+j}(x_{i+j}^\star)\\
        &=&\frac{1}{i+j}\sum_{k=1}^{i+j} f_k(x_i^\star) -  \frac{1}{i+j}\sum_{k=1}^{i+j}f_k(x_{i+j}^\star)\\
        &=& \frac{i}{i+j}\left(g_i(x_i^\star)-g_i(x_{i+j}^\star)\right)+\frac{1}{i+j}\sum_{k=i+1}^{i+j}\left(f_k(x_i^\star)-f_k(x_{i+j}^\star)\right)\\
        &\leq& \frac{i}{i+j}\left(-\frac{\mu}{2}\norm{x_{i+j}^\star-x_i^\star}^2_2-\langle\nabla g_{i}(x_{i}^\star),x_{i+j}^\star - x_i^\star\rangle\right)\\ 
        &+&\frac{1}{i+j}\sum_{k=i+1}^{i+j}\left(f_k(x_i^\star)-f_k(x_{i+j}^\star)\right)\\
        &\leq& \frac{i}{i+j}\left(-\frac{\mu}{2}\norm{x_{i+j}^\star-x_i^\star}^2_2\right) +\frac{1}{i+j}\sum_{k=i+1}^{i+j}G\norm{x_{i+j}^\star-x_i^\star}_2\\
        &\leq& \frac{i}{i+j}\left(-\frac{\mu}{2}\norm{x_{i+j}^\star-x_i^\star}^2_2\right)+\frac{jG\norm{x_{i+j}^\star-x_i^\star}_2}{i+j}
    \end{eqnarray*}
\end{proof}
We conclude the section with the proof of Lemma~\ref{lemm:distbound2}.
\begin{proof}[Proof of Lemma~\ref{lemm:distbound2}]
By the inequality $\norm{a+b}_2^2 \leq 2\norm{a}^2_2 + 2\norm{b}^2_2$ we get that
\begin{eqnarray*}
        \norm{\hat{x}_j-x_i^\star}_2^2 &\leq& 2\left(\norm{x_i^\star-x_j^\star}_2^2+\norm{\hat{x}_j-x_j^\star}_2^2\right)\\
        &\leq& \frac{8}{\mu^2}\left(\frac{Gj}{2i+j}\right)^2+\norm{\hat{x}_j-x_j^\star}_2^2
\end{eqnarray*}
The second inequality comes from Lemma~\ref{lemm:optdist} and strong convexity.
\end{proof}

\section{Proof of Lemma~\ref{l:unbias} }
\label{app:bias}
In this section we provide the proof of Lemma~\ref{l:unbias}.
\unbias*
The basic step in order to establish Lemma~\ref{l:unbias} is Lemma~\ref{lemm:induction_var} that we present up next. The proof of Lemma~\ref{l:unbias} is deferred at the end of the section.
\inductvar*
\begin{proof}[Proof of Lemma~\ref{lemm:induction_var}]
    We will inductively establish Lemma~\ref{lemm:induction_var}. Notice that after stage $i=1$, Algorithm~\ref{alg:1} sets $\redvargrad_1 = \nabla f_1(\hat{x}_1)$. Up next we show that in case the induction hypothesis holds for stage $i-1$ then it must essentially hold for stage $i$. Up next we consider the following $3$ mutually exclusive cases:
    \begin{enumerate}
        \item $i-\prev\geq \alpha i$ meaning that $\prev$ is updated in this stage.
        \item $(i-1)-\prev\geq \alpha(i-1)$ meaning that $\prev$ was update in the previous stage.
        \item $i-\prev < \alpha i$ and $(i-1)-\prev < \alpha(i-1)$.
    \end{enumerate}
    For the first case Algorithm~\ref{alg:1} reaches Step~$5$ and thus $\redvargrad_{i-1} = \sum_{j=1}^{i-1} \nabla f_j(\hat{x}_{i-1})/i-1$. At the same time $\prev$ is set to $i-1$ meaning that $\prev=i-1$ and $\redvargrad_{i-1} = \frac{1}{i-1}\sum_{k=1}^{i-1} \nabla f_k(\hat{x}_{\prev})$.\\
    
    For the second case, Algorithm~\ref{alg:1} had reached Step~$12$ at stage $i-1$ meaning that $\redvargrad_{i-1} = \sum_{j=1}^{i-1} \nabla f_j(\hat{x}_{i-1})/i-1$. At the same time $\prev$ was set to $i-1$, meaning that $\prev=i-1$ and $\redvargrad_{i-1} = \frac{1}{i-1}\sum_{k=1}^{i-1} \nabla f_k(\hat{x}_{\prev})$.\\ 
    
    For the third case, from the inductive hypothesis we have that:
    \[\redvargrad_{i-2} = \frac{1}{i-2}\sum_{k=1}^{i-2} \nabla f_k(\hat{x}_{\prev})\]
    At stage $i-1$, $\redvargrad_{i-1}$ was calculated according to Step~$16$ of Algorithm~\ref{alg:1}. As a result, 
    \begin{align*}
        \begin{split}
            \redvargrad_{i-1} &= \left(1-\frac{1}{i-1}\right)\redvargrad_{i-2} +\frac{1}{i-1}\nabla f_{i-1}(\hat{x}_{\prev})\\
            &= \frac{i-2}{i-1}\frac{1}{i-2}\sum_{k=1}^{i-2} \nabla f_k(\hat{x}_{\prev})+ \frac{1}{i-1}\nabla f_{i-1}(\hat{x}_{\prev})\\
            &= \frac{1}{i-1}\sum_{k=1}^{i-1} \nabla f_k(\hat{x}_{\prev})
        \end{split}
    \end{align*}
\end{proof}

\begin{proof}[Proof of Lemma~\ref{l:unbias}]
Let $\mathcal{F}_t^i$ denote the filtration until step $t \in [T_i]$ of stage $i \in [n]$.

By the definition of $\nabla_i^t$ at Step~$5$ of Algorithm~\ref{alg:frequent} we know that

\begin{eqnarray*}
        \bE\left[\nabla_i^t~|~\mathcal{F}_t^i\right] &=& \left(1-\frac{1}{i}\right)\left(\bE\left[\nabla f_{u_t}(x_i^t)-\nabla f_{u_t}(\hat{x}_{\prev})~|~\mathcal{F}_t^i\right]+\redvargrad_{i-1}\right)+\frac{1}{i}\nabla f_i(x_i^t)\\
        &=& \frac{i-1}{i}\left(\sum_{k=1}^{i-1} \mathrm{Pr}\left[u_t = k\right]\left(\nabla f_{k}(x_i^t)-\nabla f_k(\hat{x}_{\prev})\right)+\redvargrad_{i-1}\right)+\frac{1}{i}\nabla f_i(x_i^t)\\
        &=& \frac{i-1}{i}\left(\frac{1}{i-1}\sum_{k=1}^{i-1}\left(\nabla f_{k}(x_i^t)-\nabla f_k(\hat{x}_{\prev})\right)
         +\redvargrad_{i-1}\right)+\frac{1}{i}\nabla f_i(x_i^t)\\
        &=& \frac{1}{i}\sum_{k=1}^{i-1}\nabla f_{k}(x_i^t) - \frac{1}{i}\sum_{k=1}^{i-1}\nabla f_k(\hat{x}_{\prev}) + \frac{i-1}{i}\redvargrad_{i-1} +\frac{1}{i}\nabla f_i(x_i^t)
\end{eqnarray*}
Lemma~\ref{lemm:induction_var} ensures that $\frac{i-1}{i}\redvargrad_{i-1}:= \frac{1}{i}\sum_{k=1}^{i-1} \nabla f_k(\hat{x}_{\prev})$ meaning that
\[\bE\left[\nabla_i^t~|~\mathcal{F}_t^i\right] = \frac{1}{i}\sum_{k=1}^{i-1}\nabla f_{k}(x_i^t) +\frac{1}{i}\nabla f_i(x_i^t) = \nabla g_i(x_i^t).\]
\end{proof}

\section{Proof of Lemma~\ref{l:bounded_variance}}
\label{app:variance}
In this section we provide the proof of Lemma~\ref{l:bounded_variance}.
\boundvariance*
In order to prove Lemma~\ref{l:bounded_variance} we first state and establish Lemma~\ref{lemm:variance}
\variance*
\begin{proof}[Proof of Lemma~\ref{lemm:variance}]
By substituting the definition of $\nabla_i^t$:
\begin{align*}
    \begin{split}
        &\bE[\norm{\nabla_i^t-\nabla g_i(x_i^t)}_2^2] = \left(1-\frac{1}{i}\right)^2 \cdot  \bE\left[\norm{\nabla f_{u_t}(x_i^t)-\nabla f_{u_t}(\hat{x}_{\prev})+\redvargrad_{i-1}-\nabla g_{i-1}(x_i^t)}_2^2\right]\\
        &\leq 2\left(1-\frac{1}{i}\right)^2 \cdot \bE\left[\norm{\nabla f_{u_t}(x_i^t)-\nabla f_{u_t}(\hat{x}_{\prev})}_2^2\right] + 2\left(1-\frac{1}{i}\right)^2 \cdot \bE\left[\norm{\frac{1}{i-1}\sum_{k=1}^{i-1} \nabla f_k(x_i^t) -\redvargrad_{i-1}}_2^2\right]\\
        &= 2\left(1-\frac{1}{i}\right)^2 \cdot \bE\left[\norm{\nabla f_{u_t}(x_i^t)-\nabla f_{u_t}(\hat{x}_{\prev})}_2^2\right]\\
        &+ 2\left(1-\frac{1}{i}\right)^2 \cdot \bE\left[\norm{\frac{1}{i-1}\sum_{k=1}^{i-1} (\nabla f_k(x_i^t) - \nabla f_k(\hat{x}_{\prev}))}_2^2\right]\\
        &\leq 2\left(1-\frac{1}{i}\right)^2 \cdot L^2 \cdot \bE\left[\norm{x_i^t-\hat{x}_{\prev}}_2^2\right]+ 2\left(1-\frac{1}{i}\right)^2 \frac{1}{(i-1)^2}\bE\left[\norm{\sum_{k=1}^{i-1} (\nabla f_k(x_i^t) - \nabla f_k(\hat{x}_{\prev}))}_2^2\right]\\
        &\leq 2\left(1-\frac{1}{i}\right)^2 \cdot L^2 \cdot \bE\left[\norm{x_i^t-\hat{x}_{\prev}}_2^2\right]+ 2\left(1-\frac{1}{i}\right)^2 \frac{i-1}{(i-1)^2}\bE\left[\sum_{k=1}^{i-1}\norm{ \nabla f_k(x_i^t) - \nabla f_k(\hat{x}_{\prev})}_2^2\right]\\
                &= 2\left(1-\frac{1}{i}\right)^2 \cdot L^2 \cdot \bE\left[\norm{x_i^t-\hat{x}_{\prev}}_2^2\right]+ 2\left(1-\frac{1}{i}\right)^2 \frac{1}{(i-1)}\sum_{k=1}^{i-1} \bE\left[ \norm{ \nabla f_k(x_i^t) - \nabla f_k(\hat{x}_{\prev})}^2_2\right]\\
                                &= 2\left(1-\frac{1}{i}\right)^2 \cdot L^2 \cdot \bE\left[\norm{x_i^t-\hat{x}_{\prev}}_2^2\right]+ 2\left(1-\frac{1}{i}\right)^2 \frac{1}{(i-1)}\sum_{k=1}^{i-1} \bE\left[ \norm{ \nabla f_k(x_i^t) - \nabla f_k(\hat{x}_{\prev})}^2_2\right]\\
        &\leq 2\left(1-\frac{1}{i}\right)^2  \cdot L^2 \cdot \bE\left[\norm{x_i^t-\hat{x}_{\prev}}_2^2\right]+2\left(1-\frac{1}{i}\right)^2 \frac{L^2}{(i-1)}\sum_{k=1}^{i-1}\bE\left[\norm{x_i^t - \hat{x}_{\prev}}_2^2\right]\\
        &= 4\left(1-\frac{1}{i}\right)^2 L^2\bE\left[\norm{x_i^t - \hat{x}_{\prev}}_2^2\right]\\
        &\leq 8L^2\left(1-\frac{1}{i}\right)^2\left(\bE[\norm{x_i^t - x_i^\star}_2^2]+\bE[\norm{x_i^\star - \hat{x}_{\prev}}_2^2]\right)\\
    \end{split}
\end{align*}
\end{proof}
\begin{proof}[Proof of Lemma~\ref{l:bounded_variance}]
Applying Lemma~\ref{lemm:distbound2} for $j:= \prev$ we get that,
\begin{eqnarray*}
        \bE\left[\norm{x_i^\star - \hat{x}_{\prev}}_2^2\right] &\leq& \frac{8}{\mu^2}\left(\frac{G(i-\prev)}{i+\prev}\right)^2+2\bE\left[\norm{x_{\prev}^\star-\hat{x}_{\prev}}^2_2\right]\\
        &\leq& \frac{8}{\mu^2}\left(\frac{G(i-\prev)}{i}\right)^2+2\bE\left[\norm{x_{\prev}^\star-\hat{x}_{\prev}}^2_2\right]\\
        &\leq& \frac{8}{\mu^2}\left(\frac{Gi\alpha}{i}\right)^2+2\bE\left[\norm{x_{\prev}^\star-\hat{x}_{\prev}}^2_2\right]\\
        &=& \frac{8G^2}{\mu^2}\alpha^2+2\bE\left[\norm{x_{\prev}^\star-\hat{x}_{\prev}}^2_2\right]
\end{eqnarray*}
The third inequality comes from the fact that $i-\prev\leq \alpha i$ which is enforced by Step~$4$ of Algorithm~\ref{alg:1}. 

By Lemma~\ref{lemm:variance} we get that
\begin{eqnarray*}
\bE[\norm{\nabla_i^t-\nabla g_i(x_i^t)}_2^2] &\leq& 8L^2\bE[\norm{x_i^t - x_i^\star}_2^2]+8L^2\bE[\norm{x_i^\star - \hat{x}_{\prev}}_2^2]\\
&\leq& 8L^2\bE[\norm{x_i^t - x_i^\star}_2^2] + \frac{64G^2L^2}{\mu^2}\alpha^2+16 L^2\bE\left[\norm{x_{\prev}^\star-\hat{x}_{\prev}}^2_2\right]
\end{eqnarray*}
which concludes the proof.
\end{proof}

\section{Proof of Lemma~\ref{lemm:convergebound}}
\label{app:convbound}
In this section we provide proof for Lemma~\ref{lemm:convergebound}.

\begin{restatable}[Convergence Bound]{lemma}{convergebound}
    \label{lemm:convergebound} If $\bE[g_j(\hat{x}_j)]-g_j(x_j^\star)\leq \epsilon$ for all stages $j \in [i-1]$ then 
    \begin{align}\label{eq:convergencebound}
        \begin{split}
            \bE\left[g_i\left(\hat{x}_i\right)-g_i(x_i^\star)\right]&\leq \frac{1}{\cZ}\left(\frac{\mu}{8}(\beta-1)(\beta-2)\bE\left[\norm{x_i^0-x_i^\star}_2^2\right]+\kappa^2G^2\alpha^2\frac{1}{\mu}T_i+2\kappa^2 \epsilon T_i\right)
        \end{split}
    \end{align}
    where $\cZ = \frac{T_i(T_i-1)}{2}+(T_i+1)(\beta-1)$, $\kappa = 6L/\mu$ and $\beta = 72L^2/\mu^2$.
\end{restatable}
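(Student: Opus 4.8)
The plan is to carry out the standard weighted-average convergence analysis of projected stochastic gradient descent on the $\mu$-strongly convex prefix function $g_i$, applied to the iterates $x_i^0,\dots,x_i^{T_i}$ produced by Algorithm~\ref{alg:frequent}, but with the ordinary bounded-variance hypothesis replaced by the refined bound of Lemma~\ref{l:bounded_variance}. Fix the stage $i$ and abbreviate $r_t := \norm{x_i^t - x_i^\star}_2$ and $\Delta_t := g_i(x_i^t) - g_i(x_i^\star) \ge 0$. The starting point is the one-step inequality: since $x_i^\star \in \cD$ and the projection $\Pi_{\cD}$ is nonexpansive, the update $x_i^{t+1} = \Pi_{\cD}(x_i^t - \gamma_t\nabla_i^t)$ yields
\[
r_{t+1}^2 \;\le\; \norm{x_i^t - \gamma_t\nabla_i^t - x_i^\star}_2^2 \;=\; r_t^2 \,-\, 2\gamma_t\langle \nabla_i^t,\, x_i^t - x_i^\star\rangle \,+\, \gamma_t^2\norm{\nabla_i^t}_2^2 .
\]

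Next I would take expectation conditionally on the history $\mathcal{F}_t^i$ and feed in the structural facts already at hand. Unbiasedness $\bE[\nabla_i^t\mid\mathcal{F}_t^i] = \nabla g_i(x_i^t)$ (Lemma~\ref{l:unbias}) turns the inner-product term into $\langle\nabla g_i(x_i^t), x_i^t - x_i^\star\rangle$, which $\mu$-strong convexity lower-bounds by $\Delta_t + \tfrac{\mu}{2}r_t^2$; the bias--variance identity $\bE[\norm{\nabla_i^t}_2^2\mid\mathcal{F}_t^i] = \norm{\nabla g_i(x_i^t)}_2^2 + \bE[\norm{\nabla_i^t - \nabla g_i(x_i^t)}_2^2\mid\mathcal{F}_t^i]$, combined with $L$-smoothness ($\norm{\nabla g_i(x_i^t)}_2^2 \le 2L\Delta_t$), handles the squared-norm term; and Lemma~\ref{l:bounded_variance} together with the induction hypothesis $\bE[g_{\prev}(\hat{x}_{\prev})] - g_{\prev}(x_{\prev}^\star) \le \epsilon$ --- which by strong convexity gives $\bE[\norm{\hat{x}_{\prev} - x_{\prev}^\star}_2^2] \le 2\epsilon/\mu$ --- bounds the variance by $8L^2\bE[r_t^2] + \sigma^2$ with $\sigma^2 = \Theta\!\big(\tfrac{L^2 G^2}{\mu^2}\alpha^2 + \tfrac{L^2}{\mu}\epsilon\big)$. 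Altogether this produces the recursion
\[
\bE[r_{t+1}^2] \;\le\; \big(1 - \mu\gamma_t + 8L^2\gamma_t^2\big)\,\bE[r_t^2] \;-\; 2\gamma_t(1 - L\gamma_t)\,\bE[\Delta_t] \;+\; \gamma_t^2\sigma^2 .
\]
The point of the choices $\gamma_t = 4/(\mu(t+\beta))$ and $\beta = 72L^2/\mu^2$ is that then $\gamma_t \le \mu/(18L^2)$, so $8L^2\gamma_t^2 \le \tfrac12\mu\gamma_t$ (the distance-dependent part of the variance is swallowed by the strong-convexity contraction, leaving a factor $1 - \tfrac{\mu\gamma_t}{2} = \tfrac{t+\beta-2}{t+\beta}$) and $1 - L\gamma_t \ge \tfrac12$ (the $\bE[\Delta_t]$ coefficient stays a positive multiple of $\gamma_t$).

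To reach the stated bound I would multiply this recursion by weights $w_t \asymp (t+\beta-1)(t+\beta)$ tuned so the distance terms telescope: with $b_t \asymp (t+\beta-1)(t+\beta-2)$ one has $w_t\cdot\tfrac{t+\beta-2}{t+\beta} = b_t$ and $w_t = b_{t+1}$, turning the recursion into $c\,(t+\beta-1)\bE[\Delta_t] \le b_t\bE[r_t^2] - b_{t+1}\bE[r_{t+1}^2] + w_t\gamma_t^2\sigma^2$ for an absolute constant $c>0$, with $w_t\gamma_t^2 = \cO(1/\mu)$ since $\gamma_t^2 = \Theta(1/(\mu^2(t+\beta)^2))$. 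Summing over the rounds telescopes the $b$-terms down to the single boundary contribution $b_0\bE[r_0^2] = \Theta\!\big(\mu(\beta-1)(\beta-2)\big)\cdot\bE[\norm{x_i^0 - x_i^\star}_2^2]$ --- here $x_i^0 = \hat{x}_{i-1}$ by Step~$2$ of Algorithm~\ref{alg:frequent} --- while the error terms sum to $\cO(\sigma^2 T_i/\mu)$ and the terminal $b_{T_i+1}\bE[r_{T_i+1}^2]\ge 0$ is dropped. Since $g_i$ is convex and $\hat{x}_i$ is precisely the $w$-weighted average $\tfrac{1}{\cZ}\sum_s(s+\beta-1)x_i^{s+1}$ of these iterates with $\cZ$ the normalizing weight-sum, Jensen's inequality bounds $\bE[g_i(\hat{x}_i)] - g_i(x_i^\star)$ by $\tfrac{1}{\cZ}$ times the weighted sum of the $\bE[\Delta_t]$; dividing the telescoped estimate by $\cZ$ and recalling $\kappa = 6L/\mu$ then yields \cref{eq:convergencebound} after matching the constants in $\sigma^2$ and in the boundary term.

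The only genuinely non-routine point is the second step: unlike in vanilla SGD, the variance bound of Lemma~\ref{l:bounded_variance} is \emph{not} a constant but grows like $\bE[\norm{x_i^t - x_i^\star}_2^2]$, so the argument is of SVRG type and one must verify that this growing term is dominated by the contraction supplied by strong convexity. This is exactly what dictates the magnitude $\beta = \Theta(L^2/\mu^2)$ of the step-size offset --- hence the $L^2/\mu^2$-type factors appearing both in this bound and, downstream, in the choice of $T_i$ in Theorem~\ref{t:accuracy} and ultimately in Theorem~\ref{thm:1}. The remaining work --- the bias--variance split, the telescoping bookkeeping of $w_t$, and pinning down $\cZ$ and the numerical constants --- is careful but mechanical.
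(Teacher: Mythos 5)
Your high-level plan (SGD-style weighted telescoping on the $\mu$-strongly convex prefix function, with the distance-dependent variance of Lemma~\ref{l:bounded_variance} absorbed by the strong-convexity contraction via the choice of $\beta$) is the right shape, but the per-step inequality you propose to start from is not the one the paper uses, and this choice creates two genuine problems in the constrained setting.

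The paper does not re-derive a one-step bound; it invokes \citep[Lemma~3.2]{KatyushaXzhu} (restated here as Lemma~\ref{lemm:classicbound}), a proximal-gradient descent lemma that directly yields
\[
\bE\bigl[g_i(x_i^{t+1})-g_i(x_i^\star)\bigr]\le\bE\Bigl[\tfrac{9}{16}\gamma_t\norm{\nabla_i^t-\nabla g_i(x_i^t)}_2^2+\tfrac{1-\mu\gamma_t}{2\gamma_t}\norm{x_i^\star-x_i^t}_2^2-\tfrac{1}{2\gamma_t}\norm{x_i^\star-x_i^{t+1}}_2^2\Bigr].
\]
Your vanilla SGD distance recursion instead bounds the suboptimality at $x_i^t$, i.e.\ $\Delta_t$, and this off-by-one is not cosmetic. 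The output is $\hat{x}_i=\tfrac{1}{\cZ}\sum_{s=0}^{T_i-1}(s+\beta-1)x_i^{s+1}$, so Jensen requires a bound on $\sum_{s=0}^{T_i-1}(s+\beta-1)\bE[g_i(x_i^{s+1})-g_i(x_i^\star)]$. Your weighted sum $\sum_{t=0}^{T_i-1}(t+\beta-1)\bE[\Delta_t]$ is over the wrong iterates and with shifted weights; in particular the needed term $\bE[\Delta_{T_i}]$ never appears, so the Jensen step at the end does not go through as written.

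Second, the cocoercivity-style bound $\norm{\nabla g_i(x_i^t)}_2^2\le 2L\Delta_t$ used in your bias--variance split is false when $x_i^\star$ lies on the boundary of $\cD$, since then $\nabla g_i(x_i^\star)$ need not vanish; the valid statement is $\norm{\nabla g_i(x_i^t)-\nabla g_i(x_i^\star)}_2^2\le 2L\Delta_t$, which does not directly control $\gamma_t^2\norm{\nabla_i^t}_2^2$ in your expansion. The standard way to handle both issues simultaneously is precisely the prox-gradient (three-point) lemma: apply the projection optimality inequality at $x_i^{t+1}$ together with $L$-smoothness of $g_i$ to bound $g_i(x_i^{t+1})$ directly. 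That is exactly what Katyusha's Lemma~3.2 does, which is why the paper imports it rather than building the one-step inequality from the unconstrained-SGD template. So the plan is salvageable, but only by replacing your starting inequality with a prox-gradient descent lemma --- at which point you essentially reconstruct Lemma~\ref{lemm:classicbound} and the paper's proof.
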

\begin{proof}[Proof of Lemma~\ref{lemm:convergebound}]
In order to establish Lemma~\ref{lemm:convergebound} we use Lemma~\ref{lemm:classicbound} which is similar to \citep[Lemma 3.2]{KatyushaXzhu}. The proof of Lemma~\ref{lemm:classicbound} is presented in Appendix~\ref{app:classic-bound}.
\begin{restatable}{lemma}{classicbound}
        \label{lemm:classicbound}
        In case $\bE[\nabla_i^t] = \nabla g_i(x_i^t)$ then
        \begin{align}\label{eq:main-start}
            \begin{split}
                \bE\left[g_i(x_i^{t+1})-g_i(x_i^\star)\right]\leq \bE\left[\frac{9}{16}\gamma_t\norm{\nabla_i^t-\nabla g_i(x_i^t)}^2_2
                +\frac{1-\mu \gamma_t}{2\gamma_t}\norm{x_i^\star-x_i^t}^2_2
                -\frac{1}{2\gamma_t}\norm{x_i^\star-x_i^{t+1}}^2_2\right]
            \end{split}
        \end{align}
        where $x_i^{t+1} := \Pi_\cD \left(x_i^t - \gamma_t\nabla_i^t\right)$, $\gamma_t = \frac{4}{\mu(t+\beta)}$ and $\beta= 72L^2/\mu^2$ (see Step~$1$ of Algorithm~\ref{alg:frequent}).
    \end{restatable}
    \unbias*
Since Lemma~\ref{l:unbias} establishes that $\bE[\nabla_i^t] = \nabla g_i(x_i^t)$ we can apply Lemma~\ref{lemm:classicbound}. In order to bound the variance $\bE\left[\norm{\nabla_i^t-\nabla g_i(x_i^t)}^2_2 \right]$ appearing in the RHS of Equation~\ref{eq:main-start}, we use Lemma~\ref{l:bounded_variance} (see Appendix~\ref{app:variance} for its proof).
\boundvariance*
As discussed above by combining Lemma~\ref{lemm:classicbound} with Lemma~\ref{l:bounded_variance}, we get the following:
\begin{eqnarray}
    \bE\left[g_i(x_i^{t+1})-g_i(x_i^\star)\right] &\leq& \frac{\gamma_t^{-1}+9L^2\gamma_t-\mu}{2} \bE\left[\norm{x_i^t - x_i^\star}_2^2\right] - \frac{\gamma_t^{-1}}{2}\bE\left[\norm{x_i^{t+1} - x_i^\star}_2^2\right]\nonumber\\
    &+& 36\frac{L^2G^2}{\mu^2}\alpha^2\gamma_t + 9L^2\gamma_t\bE\left[\norm{x^\star_{\prev}-\hat{x}_{\prev}}^2_2\right]
    \label{eq:main-2}
\end{eqnarray}
By the strong-convexity of $g_{\prev}(\cdot)$ we get 
\[\norm{x^\star_{\prev}-\hat{x}_{\prev}}^2_2 \leq \frac{2}{\mu}\left(g_{\prev}(\hat{x}_{\prev})-g_{\prev}(x^{\star}_{\prev})\right)\]
and from our inductive hypothesis
\[\bE\left[g_{\prev}(\hat{x}_{\prev})-g_{\prev}(x^{\star}_{\prev})\right] \leq \epsilon\]
Notice that by the selection of $\gamma_t=4/(\mu(t+\beta))$ and $\beta=72L^2/\mu^2$ of Algorithm~\ref{alg:frequent} we get that
\begin{align*}
    \begin{split}
        9L^2\gamma_t= \frac{36L^2}{\mu(t+\beta)} \leq \frac{36L^2}{\mu \beta}
        \leq \frac{36 L^2}{\mu72L^2/\mu^2} \leq \frac{36 L^2}{72L^2/\mu} = \frac{\mu}{2}
    \end{split}
\end{align*} 
Using the previous inequalities in Equation~\ref{eq:main-2} we get that
    \begin{eqnarray}\label{eq:3}
            \bE\left[g_i(x_{t+1})-g_i(x_i^\star)\right] &\leq& \frac{\gamma_t^{-1}-\mu/2}{2} \bE\left[\norm{x_i^t - x_i^\star}_2^2\right] - \frac{\gamma_t^{-1}}{2}\bE\left[\norm{x_i^{t+1} - x_i^\star}_2^2\right]\nonumber\\
        &+& \kappa^2G^2\alpha^2\gamma_t + \frac{18L^2\gamma_t}{\mu}\epsilon  
    \end{eqnarray}
Since $\kappa = 6L/\mu$.

Multiplying both parts of Equation~\ref{eq:3} with $(t+\beta-1)$ and substituting $\gamma_t=4/\left(\mu(t+\beta)\right)$ we get that

\begin{eqnarray*}
     (t+\beta-1)\bE\left[g_i(x_i^{t+1})-g_i(x_i^\star)\right] &\leq& \frac{\mu(t+\beta-1)(t+\beta-2)}{8}\bE\left[\norm{x_i^t - x_i^\star}_2^2\right]\nonumber\\ 
            &-& \frac{\mu(t+\beta-1)(t+\beta)}{8}\bE\left[\norm{x_i^{t+1} - x_i^\star}_2^2\right] \nonumber\\
            &+& \left(\kappa^2G^2\alpha^2 +\frac{18L^2}{\mu}\epsilon\right)\cdot  \frac{4}{\mu(t+\beta)} \cdot (t + \beta - 1)
\end{eqnarray*}
where $\beta = 72L^2/\mu^2$. By setting $\kappa = 6L/\mu$ we get that,
\begin{eqnarray*}
            (t+\beta-1)\bE\left[g_i(x_i^{t+1})-g_i(x_i^\star)\right] &\leq& \frac{\mu(t+\beta-1)(t+\beta-2)}{8}\bE\left[\norm{x_i^t - x_i^\star}_2^2\right]\nonumber\\ 
            &-& \frac{\mu(t+\beta-1)(t+\beta)}{8}\bE\left[\norm{x_i^{t+1} - x_i^\star}_2^2\right] \nonumber\\
            &+& \left(\kappa^2\frac{G^2}{\mu}\alpha^2 +2\kappa^2\epsilon\right)    
\end{eqnarray*}
By taking the summation over all iterations $t=0,\dots,T_i-1$, we get
\begin{eqnarray*}
            \sum_{t=0}^{T_i-1}(t+\beta-1)\bE[g_i(x_i^{t+1})-g_i(x_i^\star)] &\leq& \frac{\mu}{8}\sum_{t=0}^{T_i-1}(t+\beta-1)(t+\beta-2)\bE\left[\norm{x_i^t-x_i^\star}_2^2\right]\\
            &-&\frac{\mu}{8}\sum_{t=0}^{T_i-1}(t+\beta-1)(t+\beta)\bE\left[\norm{x_i^{t+1}-x_i^\star}_2^2\right]\\
            &+&T_i\left(\kappa^2\frac{G^2}{\mu}\alpha^2 +2\kappa^2\epsilon\right)
\end{eqnarray*}
As a result, we get that 
\begin{eqnarray*}
            \sum_{t=0}^{T_i-1}(t+\beta-1)\bE[g_i(x_i^{t+1})-g_i(x_i^\star)] &\leq& \frac{\mu}{8}(\beta-1)(\beta-2)\bE\left[\norm{x_i^0-x_i^\star}_2^2\right] \\
            &-&\frac{\mu}{8}(T_i+\beta-2)(T_i+\beta-1)\bE\left[\norm{x_{T_i}-x_i^\star}_2^2\right]\\
            &+&T_i\left(\kappa^2\frac{G^2}{\mu}\alpha^2 +2\kappa^2\epsilon\right)   
\end{eqnarray*}
Dividing by $\cZ = \sum_{t=0}^{T_i-1}(t+\beta-1) = T_i(T_i-1)/2+T_i(\beta-1)$ and using the convexity of $g_i$ we get that
\begin{eqnarray*}
    \bE\left[g_i\left(\frac{1}{\cZ}\sum_{t=0}^{T_i-1}(t+\beta-1)x_i^{t+1}\right)-g_i(x_i^\star)\right]&\leq& \frac{1}{\cZ}\frac{\mu}{8}(\beta-1)(\beta-2)\bE\left[\norm{x_i^0-x_i^\star}_2^2\right]+\kappa^2\frac{G^2}{\mu}\alpha^2T_i\frac{1}{\cZ}\\
            &+&2\kappa^2\frac{T_i \epsilon}{\cZ}    
\end{eqnarray*}
As a result,
\[\bE\left[g_i\left(\hat{x}_i\right)-g_i(x_i^\star)\right]\leq \frac{1}{\cZ}\left(\frac{\mu}{8}(\beta-1)(\beta-2)\bE\left[\norm{x_i^0-x_i^\star}_2^2\right]+\kappa^2G^2\alpha^2\frac{1}{\mu}T_i+2\kappa^2 \epsilon T_i\right)\]
\end{proof}

\subsection{Proof of  Lemma~\ref{lemm:classicbound}}\label{app:classic-bound}
In this section we provide the proof for Lemma~\ref{lemm:classicbound}.
\classicbound*
\begin{proof}[Proof of Lemma~\ref{lemm:classicbound}]
    We start with \citep[Lemma 3.2]{KatyushaXzhu}, which we state below.
    \begin{restatable}{lemma}{zhubound}\cite{KatyushaXzhu}
        If $w_{t+1}=\argmin_{y\in \bR^d}\{\frac{1}{2\eta}\norm{y-w_t}^2_2+\psi(y)+\langle\redvargrad_t,y\rangle\}$ for some random vector $\redvargrad_t \in \bR^d$ satisfying $\bE[\redvargrad_t]=\nabla f(w_t)$, then for every $u\in \bR^d$, we have
        \[\bE[F(w_{t+1})-F(u)]\leq \bE\left[\frac{\eta}{2(1-\eta L)}\norm{\redvargrad_t-\nabla f(w_t)}^2_2 + \frac{(1-\mu_f \eta)\norm{u-w_t}^2_2-(1+\mu_\psi\eta)\norm{u-w_{t+1}}^2_2}{2\eta}\right]\]
    where $\psi$ is a proximal term, $\mu_\psi$ is its strong convexity and $\mu_f$ is the strong convexity of the optimization function $f$ and $\eta$ is the step of the algorithm and the function $F$ is defined as:
    \[F(x) = f(x) + \psi(x)\]
    \end{restatable}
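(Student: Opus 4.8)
This is the standard three-point (proximal) inequality that underlies proximal variance-reduction analyses, so the plan is to reproduce that argument in our notation, the only real work being a careful bookkeeping of constants. Throughout I would condition on the randomness generated before step $t$, so that $w_t$ and $u$ are deterministic and the sole randomness is in $\redvargrad_t$; the stated inequality then follows by taking total expectation.

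\emph{Step 1 (optimality of the prox step).} The map $y \mapsto \frac{1}{2\eta}\norm{y-w_t}_2^2 + \psi(y) + \langle \redvargrad_t, y\rangle$ is $(\tfrac1\eta + \mu_\psi)$-strongly convex and $w_{t+1}$ is its minimizer, so for every $u$
\[
\frac{1}{2\eta}\norm{w_{t+1}-w_t}_2^2 + \psi(w_{t+1}) + \langle \redvargrad_t, w_{t+1}\rangle + \frac{1+\mu_\psi\eta}{2\eta}\norm{u-w_{t+1}}_2^2 \le \frac{1}{2\eta}\norm{u-w_t}_2^2 + \psi(u) + \langle \redvargrad_t, u\rangle.
\]
Rearranging yields an upper bound on $\psi(w_{t+1}) - \psi(u) + \langle \redvargrad_t, w_{t+1}-u\rangle$ in terms of the three squared distances.

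\emph{Step 2 (smoothness and strong convexity of $f$).} I would then use $L$-smoothness, $f(w_{t+1}) \le f(w_t) + \langle \nabla f(w_t), w_{t+1}-w_t\rangle + \frac{L}{2}\norm{w_{t+1}-w_t}_2^2$, together with $\mu_f$-strong convexity, $f(w_t) \le f(u) + \langle \nabla f(w_t), w_t-u\rangle - \frac{\mu_f}{2}\norm{u-w_t}_2^2$. Adding these two and then adding the Step~1 rearrangement assembles $F(w_{t+1}) - F(u)$; the gradient terms telescope into $\langle \nabla f(w_t) - \redvargrad_t,\, w_{t+1}-u\rangle$, the coefficient of $\norm{w_{t+1}-w_t}_2^2$ becomes $\frac{L}{2} - \frac{1}{2\eta} = -\frac{1-L\eta}{2\eta}$, and the coefficients of $\norm{u-w_t}_2^2$ and $\norm{u-w_{t+1}}_2^2$ land exactly on $\frac{1-\mu_f\eta}{2\eta}$ and $-\frac{1+\mu_\psi\eta}{2\eta}$.

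\emph{Step 3 (handling the noise term).} Split $w_{t+1}-u = (w_{t+1}-w_t) + (w_t-u)$. Taking expectations, $\bE[\langle \nabla f(w_t) - \redvargrad_t,\, w_t-u\rangle] = 0$ because $\bE[\redvargrad_t] = \nabla f(w_t)$ and $w_t-u$ is deterministic; for the leftover term I would apply Young's inequality $\langle a,b\rangle \le \frac{\rho}{2}\norm{a}_2^2 + \frac{1}{2\rho}\norm{b}_2^2$ with $\rho = \frac{\eta}{1-L\eta}$, so that the resulting $\frac{1-L\eta}{2\eta}\norm{w_{t+1}-w_t}_2^2$ exactly cancels the negative quadratic from Step~2, leaving $\frac{\eta}{2(1-L\eta)}\norm{\redvargrad_t-\nabla f(w_t)}_2^2$. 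Collecting the surviving terms gives the claimed bound.

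\emph{Main obstacle.} There is no conceptual difficulty — the content is entirely the constant bookkeeping: the cancellation of $\norm{w_{t+1}-w_t}_2^2$ requires $\eta < 1/L$ and the precise Young parameter $\rho = \eta/(1-L\eta)$, and one must take care that $\mu_f$ and $\mu_\psi$ anchor at the correct points ($w_t$ versus $w_{t+1}$). This is exactly the prox-SVRG three-point argument, which I would simply carry out in our notation, citing \cite{KatyushaXzhu}.
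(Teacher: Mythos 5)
Your proof is correct: the strong-convexity three-point inequality for the prox step, the $L$-smoothness/$\mu_f$-strong-convexity bounds anchored at $w_t$, the split of $w_{t+1}-u$ with the unbiasedness killing the $w_t-u$ term, and Young's inequality with $\rho=\eta/(1-L\eta)$ cancelling the $-\frac{1-L\eta}{2\eta}\norm{w_{t+1}-w_t}_2^2$ term reproduce exactly the stated constants. The paper itself gives no proof of this lemma — it is quoted verbatim from \cite{KatyushaXzhu} — and your argument is the standard derivation from that reference, so there is nothing to reconcile beyond noting that the unbiasedness must be understood conditionally on the filtration up to $t$, which you handle.
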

    For our setting the proximal term is:
    \[\psi(x) = \begin{cases}
            0, & x \in \cD\\
            \infty, & x \notin \cD
                \end{cases}\]
    This means that $\mu_\psi = 0$.
    The step $\eta$ for our analysis is $\gamma_t$ and $f\equiv g_i$. 
    
    By taking $u = x_i^\star = \argmin_{x\in \cD} g_i(x)$ and since due to projection on $\cD$
    \[F(x_i^{t+1})=g_i(x_i^{t+1})+\psi(x_i^{t+1})=g_i(x_i^{t+1})\] 
    the inequality can be restated as:
    \[\bE[g_i(x_i^{t+1})-g_i(x_i^\star)]\leq \bE\left[\frac{\gamma_t}{2(1-\gamma_t L)}\norm{\nabla_i^t - \nabla g_i(x_i^t)}^2_2+\frac{1-\mu \gamma_t}{2\gamma_t}\norm{x_i^\star-x_i^t}^2_2
                -\frac{1}{2\gamma_t}\norm{x_i^\star-x_i^{t+1}}^2_2\right]\]
    Notice that by the selection of $\gamma_t = 4/\left(\mu(t+\beta)\right)$ in Step~$6$ of Algorithm~\ref{alg:frequent} we can do the following simplification
    \[\frac{1}{(1-\gamma_tL)}=\frac{1}{(1-\frac{4L}{\mu(t+\beta)})}\leq \frac{1}{(1-\frac{4L}{\mu\beta})}=\frac{1}{(1-\frac{4\mu}{72L})}\leq \frac{1}{(1-\frac{4}{72})}=\frac{18}{17}\leq \frac{9}{8}\]
    which gives the theorem statement. The last part of the proof required, is to show that the following two update rules are equivalent.
    \begin{align*}
        \begin{split}
            x_i^{t+1}&=\argmin_{y\in \bR^d}\{\frac{1}{2\gamma_t}\norm{y-x_i^t}^2_2+\psi(y)+\langle\nabla_i^t,y\rangle\}\\
            x_i^{t+1} &= \Pi_\cD \left(x_i^t - \gamma_t\nabla_i^t\right) 
        \end{split}
    \end{align*}
    which is a well known fact in the literature and this concludes the proof.
\end{proof}

\section{Omitted Proofs of Section~\ref{section:convergence-results}}
\subsection{Proof of Lemma~\ref{l:sparse}}\label{app:complex}
In this section we prove Lemma~\ref{l:sparse}, for the sake of exposition we restate it up next.
\sparse*
\begin{proof}[Proof of Lemma~\ref{l:sparse}]
Step~$5$ and~$12$ are only executed when the following inequality is satisfied:
    \begin{align}\label{eq:cons-seq}
        \begin{split}
            i-\prev \geq \alpha\cdot i \Rightarrow
            i \geq \frac{1}{1-\alpha} \cdot \prev
        \end{split}
    \end{align}
    Once Algorithm~\ref{alg:1} reaches Step~$5$ and~$12$ it necessarily, reaches Step~$13$ where $\prev$ is updated to $i$. Let $z_0=1,z_1,\ldots,z_k,\ldots$ the sequence of stages where $z_k$ denotes the stage at which Algorithm~\ref{alg:1} reached Step~$5$ and~$12$ for the $k$-th time. By Equation~\ref{eq:cons-seq} we get that $z_{k+1} \geq \frac{1}{1-\alpha} \cdot z_k$ implying that 
    \[     z_k \geq \left(\frac{1}{1-\alpha}\right)^k \]
    Since $z_k \leq n$ we get that $k \leq \frac{\log n}{ \log \left(\frac{1}{1-\alpha}\right)}$. Notice that $\log \left(\frac{1}{1-\alpha}\right) = -\log(1-\alpha) \geq 1-(1-\alpha) = \alpha$ and thus $ k \leq \frac{\log n}{ \alpha}$.   
\end{proof}

Using Lemma~\ref{l:sparse}, we can now also show Corollary~\ref{l:FOs}.
\fos*
\begin{proof}[Proof of Corollary~\ref{l:FOs}]
    At each iteration of Algorithm~\ref{alg:frequent} requires $3$ FOs (Step~$5$) and thus Algorithm~\ref{alg:frequent} requires overall $3T_i$ FOs during stage $i \in [n]$. At Step~$5$ and~$12$ Algorithm~\ref{alg:1} requires at most $n$ FOs and thus by Lemma~\ref{l:sparse} it overall requires $2n\lceil\log n/\alpha \rceil$ FOs.
\end{proof}

\subsection{Proof of Theorem~\ref{t:accuracy}}\label{app:thm3}
In this section we provide the proof for Theorem~\ref{t:accuracy}, for the sake of exposition we restate it up next.
\accuracy*

\begin{proof}[Proof of Theorem~\ref{t:accuracy}]
    At stage $i:=1$, Algorithm~\ref{alg:1} performs gradient descent using $f_1$ in order to produce $\hat{x}_1 \in \mathcal{D}$. The latter requires $\cO\left(L\log(1/\epsilon)/\mu\right)$ FOs.
    Let us inductively assume that,
    \[\bE[g_j(\hat{x}_j)]-g_j(x_j^\star)\leq \epsilon ~~~\text{ for all }~~j \in [i-1].\]
    Using the latter we will establish that $\bE[g_i(\hat{x}_i)]-g_i(x_i^\star)\leq \epsilon$. In order to do the latter we first use Lemma~\ref{lemm:convergebound}. The proof of which can be found in Appendix~\ref{app:convbound} and its proof is based on the fact that the estimator $\nabla_i^t$ is unbiased and admits bounded variance (see Lemma~\ref{l:unbias} and~\ref{l:bounded_variance}).
    \convergebound*
    Since the conditions of Lemma~\ref{lemm:convergebound} are ensured by the induction hypothesis, we will appropriately select $T_i$ such that the right-hand side of 
    Equation~\ref{eq:convergencebound} is upper bound by $\epsilon > 0$.
    
    At first, we upper bound the term $\bE\left[\norm{x_i^0-x_i^
    \star}^2_2\right]$ appearing in the first term of the RHS of Equation~\ref{eq:convergencebound}. Recall that in Step~$2$ of Algorithm~\ref{alg:frequent}, we set $x_i^0\leftarrow \hat{x}_{i-1}\in \cD$. As a result, 
    \[\norm{x_i^0-x_i^
    \star}^2_2 = \norm{\hat{x}_{i-1}-x_i^
    \star}^2_2 \]
    In order to upper bound the term $\norm{\hat{x}_{i-1}-x_i^
    \star}^2_2$ we use Lemma~\ref{lemm:distbound2}, the proof of which can be found in Appendix~\ref{app:cons-optimum}.
    \distancebound*
    Applying Lemma~\ref{lemm:distbound2} for $j := i-1$ we get that 
    \begin{equation}
        \bE\left[\norm{x_i^0-x_i^\star}_2^2\right] = \bE\left[\norm{\hat{x}_{i-1}-x_i^\star}_2^2\right] \leq \frac{8}{\mu^2}\left(\frac{G}{2i-1}\right)^2+2\bE\left[\norm{\hat{x}_{i-1}-x^\star_{i-1}}_2^2\right]
    \end{equation}
    By the inductive hypothesis we know that $\bE\left[ g_{i-1}(\hat{x}_{i-1})\right]-g_{i-1}(x^\star_{i-1}) \leq \epsilon$ and thus by the strong convexity $g_i(\cdot)$, we get that
    \[\bE\left[\norm{\hat{x}_{i-1}-x^\star_{i-1}}^2_2\right] \leq \frac{2}{\mu}\epsilon\]
    which yields Equation~\ref{eq:new2}:
    \begin{equation}\label{eq:new2}
        \bE\left[\norm{x_i^0-x_i^\star}_2^2\right] \leq \frac{8}{\mu^2}\left(\frac{G}{2i-1}\right)^2+\frac{4}{\mu}\epsilon
    \end{equation}
The value of $\cZ$ can be lower bounded as follows:
    \[ \cZ := T_i(T_i-1)/2+(T_i+1)(\beta-1) \geq T_i(T_i-1)/2 \geq T_i^2/4\]
    So by combining Equation~\ref{eq:convergencebound} with the previous two inequalities we get:
    \begin{eqnarray*}
        \bE[g_i(\hat{x}_i)]-g_i(x_i^\star) &\leq& \frac{4}{T_i^2}\left(\frac{\mu}{8}(\beta-1)(\beta-2)\left(\frac{8}{\mu^2}\left(\frac{G}{2i-1}\right)^2+\frac{4}{\mu}\epsilon\right)+\kappa^2G^2\alpha^2\frac{1}{\mu}T_i+2\kappa^2\epsilon T_i\right)\\
        &=& \left(\frac{4G^2}{\mu T_i^2(2i-1)^2}+\frac{2\epsilon}{T_i^2}\right)(\beta-1)(\beta-2)+4\kappa^2G^2\frac{\alpha^2}{T_i\mu}+8\kappa^2\epsilon\frac{1}{T_i}
    \end{eqnarray*}
    
    Now the upper bound on $\bE[g_i(\hat{x}_i)]-g_i(x_i^\star)$ admits four terms all of which depend on $T_i$. Thus we can select $T_i$ so that each of the terms is upper bounded by $\epsilon / 4$. Namely,
    
    \[
    \begin{cases}
        \frac{4G^2}{\mu T_i^2(2i-1)^2}(\beta-1)(\beta-2) &\leq \epsilon/4\\
        \frac{2\epsilon}{T_i^2}(\beta-1)(\beta-2) &\leq \epsilon/4\\
        4\kappa^2G^2\frac{\alpha^2}{i^2T_i\mu} &\leq \epsilon/4\\
        8\kappa^2\epsilon\frac{1}{T_i}&\leq \epsilon/4
    \end{cases}
    \]
    Since $\kappa=6\frac{L}{\mu}$, we get that we can set $T_i$ as follows,
    \begin{eqnarray*}
        T_i &=& \max\{192\frac{GL^2}{\mu^{5/2}i\sqrt{\epsilon}}, 204\frac{L^2}{\mu^2}, 576\frac{L^2G^2\alpha^2}{\mu^3 i^2 T_i}, 1152\frac{L^2}{\mu^2}\}\\
        &=& \max\{192\frac{GL^2}{\mu^{5/2}i\sqrt{\epsilon}}, 576\frac{L^2G^2\alpha^2}{\mu^3 i^2 T_i}, 1152\frac{L^2}{\mu^2}\}\\
        &\leq& 192\frac{GL^2}{\mu^{5/2}i\sqrt{\epsilon}}+ 576\frac{L^2G^2\alpha^2}{\mu^3 i^2 T_i} + 1152\frac{L^2}{\mu^2}
    \end{eqnarray*}
    The proof is completed by selecting  $\alpha=\mu\epsilon^{1/3}/(9G^{2/3}L^{2/3})$.
\end{proof}

\section{Proof of Theorem~\ref{thm:1}}\label{app:const-approx}
In this section we provide the proof for Theorem~\ref{thm:1}.
\static*
\begin{proof}[Proof of Theorem~\ref{thm:1}]
The proof of Theorem~\ref{thm:1} follows by combining Theorem~\ref{t:accuracy} and Corollary~\ref{l:FOs}. Their proofs are respectively in Appendix~\ref{app:thm3} and~\ref{app:complex}.

\accuracy*
\fos*
Using the selection of $T_i$ provided in Theorem~\ref{t:accuracy} to Corollary~\ref{l:FOs} we get that
\begin{eqnarray*}
    \sum_{i=1}^n T_i &\leq& \frac{720GL^2}{\mu^{5/2}\sqrt{\epsilon}}\sum_{i=1}^n\frac{1}{i}+ n \left(\frac{9L^{2/3}G^{2/3}}{\epsilon^{1/3}\mu}+864\frac{L^2}{\mu^2}\right)\\
    &\leq& \frac{720GL^2}{\mu^{5/2}\sqrt{\epsilon}}\log n+n\left(\frac{9L^{2/3}G^{2/3}}{\epsilon^{1/3}\mu}+864\frac{L^2}{\mu^2}\right)
\end{eqnarray*}
At the same time, using the selection of $\alpha$ provided in Theorem~\ref{t:accuracy} we get
\begin{eqnarray*}
    2n\log n /\alpha:= 40\frac{L^{2/3}G^{2/3}}{\mu}\frac{n\log n}{\epsilon^{1/3}}
\end{eqnarray*}
\end{proof}

\section{Proof of Theorem~\ref{c:imp}}\label{app:lowerbound}
In this section we provide the proof of Theorem~\ref{c:imp}. We first present an intermediate theorem that is the main technical contribution of the section. 

\begin{restatable}{theorem}{lowerbound}\label{thm:lowerbound}
Let a natural first-order method $\mathcal{A}$ (even randomized) that given a sequence of $n$ strongly convex functions $f_1,\ldots,f_n$ outputs a sequence $\hat{x}_1,\ldots,\hat{x}_n \in \mathcal{D}$ by performing overall $o(n^2)$ FOs. Then there exists a sequence of strongly convex functions $f_1,\ldots,f_n$ with $f_i:[-1,1]^d \mapsto \mathbb{R}$ and $\mu,G,L = \mathcal{O}(1)$ such that
\[ \bE\left[g_i(\hat{x}_i)\right] - g_i(x_i^\star) \geq \Omega\left(1/n^4\right) ~~~~\text{ for some stage } i \in [n]\]
\end{restatable}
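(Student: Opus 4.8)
The plan is to prove Theorem~\ref{thm:lowerbound} by a resisting-oracle argument, phrased through Yao's principle so that it also covers a randomized $\mathcal{A}$: we play the role of the functions, we answer each query $\nabla f_k(x)$ online in a way consistent with a whole subfamily of not-yet-fixed instances, and only after $\mathcal{A}$ has produced $\hat x_1,\dots,\hat x_n$ do we commit (with a little fresh randomness) to a concrete sequence $f_1,\dots,f_n$; exhibiting such a family together with a distribution on it yields the expected-gap bound in the statement, with a fixed bad instance for deterministic $\mathcal{A}$ as a special case. A preliminary remark fixes the shape of the family: the theorem demands $\mu,G,L=\Theta(1)$ \emph{uniformly on the box}, and a globally $\mu$-strongly-convex function on a set of diameter $D$ satisfies $G\ge\mu D/2$, so the dimension must be a constant $d=O(1)$. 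Hence the hardness cannot come from a high-dimensional ``chain'' construction but must come entirely from the \emph{online} nature of the problem: $\mathcal{A}$ must commit to $\hat x_i$ while the prefix minimizer $x_i^\star$ keeps drifting --- by $\Theta(1/i)$ between consecutive stages, cf.\ Lemma~\ref{lemm:optdist} --- as later functions arrive.

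Accordingly we take $d$ a small constant, $\mathcal D=[-1,1]^d$, and build each $f_k$ as $\tfrac{\mu}{2}\|x\|^2$ plus a convex, $\Theta(1)$-Lipschitz, $\Theta(1)$-smooth perturbation, so that every $f_k$ and every prefix $g_i$ from \eqref{eq:prefix} is $\mu$-strongly convex and $\Theta(1)$-conditioned on the box, while the data that governs where $x_i^\star$ sits is hidden in directions (and at locations) that a single gradient fails to resolve, so that tracking $x_i^\star$ to accuracy $\ll 1/n^2$ repeatedly forces many queries. The second ingredient is that $\mathcal{A}$ is \emph{natural} (Definition~\ref{d:nat_method}): every iterate $x_i^t$ and every output $\hat x_i$ lies in $\Pi_{\mathcal D}$ of the linear span of $\hat x_0$ and all gradients $\mathcal{A}$ has ever computed. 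We arrange the template so that, at the query points $\mathcal{A}$ actually visits, these gradients span (up to the known $\mu x$ part) a fixed axis-aligned subspace; then $\Pi_{[-1,1]^d}$ preserves it and $\hat x_i$ is confined to a region whose complement we are still free to push $x_i^\star$ into.

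The core is a counting step. Since $\mathcal{A}$ performs $o(n^2)$ first-order oracles over the $n$ stages, averaging over the stages $i\in[n/2,n]$ yields a stage $i^\star=\Theta(n)$ at which $\mathcal{A}$ performs only $o(i^\star)$ oracles; at that stage $\mathcal{A}$ has fresh first-order information about only an $o(1)$-fraction of the $i^\star$ functions summing to $g_{i^\star}$, and for the rest it must rely on cached gradients, each stale by the $\Theta(1/i^\star)$ drift. Using the natural-span restriction to rule out that $\mathcal{A}$ hedges outside the subspace its gradients span, we complete the instance with a fair sign $\sigma\in\{\pm1\}$ that flips the hidden data of one under-queried function on its unexplored part; every answer already given is consistent with both values of $\sigma$, so $\hat x_{i^\star}$ is $\sigma$-independent, whereas $x_{i^\star}^\star$ separates by $\Omega(1/n^2)$ between $\sigma=+1$ and $\sigma=-1$ (one factor of $1/n$ from averaging the flip into $g_{i^\star}$ with weight $1/i^\star$, a second because the method already knows $x_{i^\star}^\star$ to first order and only the correction is hidden). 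Averaging the triangle inequality $\|\hat x_{i^\star}-x_{i^\star}^\star(+1)\|+\|\hat x_{i^\star}-x_{i^\star}^\star(-1)\|\ge\|x_{i^\star}^\star(+1)-x_{i^\star}^\star(-1)\|$ over $\sigma$ gives $\bE_\sigma\|\hat x_{i^\star}-x_{i^\star}^\star\|=\Omega(1/n^2)$, and strong convexity \eqref{eq:StrongConvex} yields $\bE[g_{i^\star}(\hat x_{i^\star})]-g_{i^\star}(x_{i^\star}^\star)\ge\tfrac{\mu}{2}\,\bE\|\hat x_{i^\star}-x_{i^\star}^\star\|^2=\Omega(1/n^4)$.

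The main obstacle is the construction itself: one must design the template so that it keeps $\mu,G,L=\Theta(1)$ on $[-1,1]^d$ for \emph{every} completion consistent with $\mathcal{A}$'s transcript (the mechanism that hides the data must not destroy strong convexity or that data), so that a single gradient leaks only $O(1)$ about the hidden data (so the coin $\sigma$ is still undetermined after the $o(n)$ queries that function received), and --- the delicate point, in constant dimension where a couple of generic gradients already span everything --- so that $x_{i^\star}^\star$ is nevertheless forced to separate in a direction the natural-span restriction keeps $\hat x_{i^\star}$ away from, even after the $\Pi_{[-1,1]^d}$ projection and under the $\Theta(1/i^\star)$ drift; this is exactly where the quantitative $1/n^2$ separation (hence the $1/n^4$ gap) is extracted, and a cleaner instance might conceivably do better. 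Secondary work is the inductive bookkeeping of the natural-span property (in the spirit of Lemma~\ref{lemm:induction_var}), so that the ``$\hat x_{i^\star}$ misses the free direction'' step is airtight, and making the Yao reduction precise, so that ``$o(n^2)$ FOs in expectation'' still forces an under-queried function and an outside-the-span, drifting minimizer with constant probability.
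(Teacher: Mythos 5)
Your high-level plan matches the paper's proof: use the $o(n^2)$ budget to find, by averaging, a stage $i$ that receives at most $i/2$ FOs; hide a uniformly random piece of data among the $i-1$ earlier functions that $\mathcal{A}$ therefore fails to probe with probability $\ge 1/2$; use the natural-span restriction of Definition~\ref{d:nat_method} to pin $\hat x_i$ to a subspace independent of the hidden data; show that $x_i^\star$ leaves that subspace by $\Theta(1/i^2)$ (one factor of $1/i$ from the $1/i$ weight of the hidden function in $g_i$, one from the $\Theta(1/i)$ scale of $[x_i^\star]_1$); and pass to an $\Omega(1/n^4)$ value gap. You also correctly note that constant dimension is forced by $\mu,G,L=\Theta(1)$, which is precisely what makes the span step delicate. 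The only real deviations from the paper are cosmetic: the paper lower-bounds the value gap by a direct computation rather than through strong convexity, and it handles randomized $\mathcal{A}$ by planting a random index $k$ rather than invoking Yao.

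However, you explicitly leave the construction as ``the main obstacle,'' and that construction \emph{is} the proof. The concern you flag --- that in constant dimension a couple of generic gradients already span everything, making the natural-span restriction vacuous --- is real and is not resolved by your sketch; without resolving it nothing prevents the algorithm from spanning the ambient space with its first two queries. The paper resolves it in two steps. First, it sets $d=2$ and $\hat x_0 = (0,0)$ and arranges $\nabla f_\ell(0,0) = 0$ for every $\ell \le i-1$; a natural method is then provably stuck at the origin through stage $i-1$ (Lemma~\ref{c:11}), so the entire prefix yields no information. Second, among the stage-$i$ functions only the single randomly chosen $f_k(x) = [x]_1^2 + [x]_2^2 + ([x]_1-[x]_2)^2$ has a gradient with a nonzero second component at points $(\cdot,0)$; the other $f_\ell$ are $[x]_1^2+[x]_2^2$ and $f_i(x) = ([x]_1-1)^2 + [x]_1^2 + [x]_2^2$, all with axis-aligned gradients. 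Hence at stage $i$, if $f_k$ goes unqueried, every gradient $\mathcal{A}$ sees lies on the $[x]_2 = 0$ axis, so $[\hat x_i]_2 = 0$ (Lemma~\ref{l:29}) while $[x_i^\star]_2 = 1/(i^2+3i+1) = \Theta(1/i^2)$. Your sign-flip variant could be made to work along the same lines, but until you supply a concrete template with these properties --- gradients vanish at the start, queried gradients stay axis-aligned unless the hidden function is probed, and $\mu,G,L$ are uniform over all completions --- the argument remains a plan rather than a proof.
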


The proof of Theorem~\ref{thm:lowerbound} lies in Section~\ref{s:thm:lowerbound}. To this end we use Theorem~\ref{thm:lowerbound} to establish Theorem~\ref{c:imp}.

\imp*
\begin{proof}[Proof of Theorem~\ref{c:imp}]
Let us assume that there exists a natural first order method $\mathcal{A}$ with overall complexity $\mathcal{O}\left(n^{2-\alpha}\log(1/\epsilon)\right)$ for some $\alpha >0$. By setting $\epsilon = \mathcal{O}(1/n^5)$, we get that there exists a natural first-order method that for sequence $f_1,\ldots,f_n$ guarantees
\[ \bE\left[g_i(\hat{x}_i)\right] - g_i(x_i^\star) \leq \mathcal{O}\left(1/n^5\right) ~~~~\text{ for each stage } i \in [n]\]
with overall FO complexity $\mathcal{O}(n^{2-\alpha}\log n)$. However the latter contradicts with Theorem~\ref{thm:lowerbound}.

Respectively let us assume that there exists a natural first order method $\mathcal{A}$ with overall complexity $\mathcal{O}\left(n/\epsilon^\alpha \right)$ for some $\alpha < 1/4$. By setting $\epsilon = \mathcal{O}(1/n^4)$, we get that there exists a natural first-order method that for sequence $f_1,\ldots,f_n$ guarantees
\[ \bE\left[g_i(\hat{x}_i)\right] - g_i(x_i^\star) \leq \mathcal{O}\left(1/n^4\right) ~~~~\text{ for each stage } i \in [n]\]
with overall FO complexity $\mathcal{O}(n^{1+\alpha/4})$. However the latter contradicts with Theorem~\ref{thm:lowerbound}. 
\end{proof}

\subsection{Proof of Theorem~\ref{thm:lowerbound}}\label{s:thm:lowerbound}

\begin{proof}[Proof of Theorem~\ref{thm:lowerbound}]
To simplify notation, we denote with $[x]_\ell$ the coordinate $\ell$ of vector $x \in \mathbb{R}^d$. In our lower bound construction we consider $d:=2$.

Since $\mathcal{A}$ performs $o(n^2)$ FOs then there exists a stage $i >1$ such that  $\sum_{t \in T_i} |Q_i^t|\leq i /2$ (otherwise the overall number of FOs, $\sum_{i \in [n]}\sum_{t \in T_i} |Q_i^t|\geq n^2 /4$). Using this specific index $i\in [n]$ we construct the following (random) sequence of functions $f_1,\ldots,f_n$ where each $f_i:[-1,1]^2 \mapsto \mathbb{R}$: 

\[f_\ell(x)= \left\{
\begin{array}{ll}
      ([x]_1)^2 + ([x]_2)^2 & \ell \neq i,k\\
            ([x]_1)^2 + ([x]_2)^2 + ([x]_1 - [x]_2)^2 & \ell = k\\
            ([x]_1-1)^2 + ([x]_{1})^2 + ([x]_{2})^2 & \ell = i\\

\end{array} 
\right. \]
where $k \sim \mathrm{Unif}(1,\ldots,i-1)$.

Before proceeding we remark that each function $f_\ell: [-1,1]^2 \mapsto \mathbb{R}$ is $G$-Lipschitz, $L$-smooth and $\mu$-strongly convex with $G,L,\mu = \mathcal{O}(1)$.

\begin{restatable}{corollary}{constants}\label{c:constants}
Each $f_\ell:[-1,1]^2 \mapsto \mathbb{R}$ is $2$-strongly convex,  $6$-smooth and $6\sqrt{2}$-Lipschitz.
\end{restatable}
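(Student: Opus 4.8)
The plan is to verify the three bounds separately for each of the three function types appearing in the construction. Since every $f_\ell$ is a quadratic polynomial in two variables, its Hessian is a constant $2\times 2$ matrix, and the strong-convexity and smoothness constants can be read directly off its eigenvalues: recall from Section~\ref{s:prelims} that $\mu$-strong convexity amounts to $\nabla^2 f_\ell \succeq \mu I$ and $L$-smoothness to $\norm{\nabla^2 f_\ell}_2 \leq L$. The Lipschitz constant will follow by maximizing $\norm{\nabla f_\ell(x)}_2$ over the box $[-1,1]^2$, which for a quadratic with affine gradient is attained at a vertex.

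First, for $\ell \neq i,k$ we have $f_\ell(x)=[x]_1^2+[x]_2^2$, so $\nabla^2 f_\ell = 2I$, giving $\mu = L = 2$, and $\nabla f_\ell(x) = (2[x]_1,2[x]_2)$ has norm at most $2\sqrt{2}$ on $[-1,1]^2$. Next, for $\ell = k$, expanding $([x]_1-[x]_2)^2$ gives $f_k(x) = 2[x]_1^2 + 2[x]_2^2 - 2[x]_1[x]_2$, whose Hessian $\left(\begin{smallmatrix} 4 & -2 \\ -2 & 4 \end{smallmatrix}\right)$ has eigenvalues $2$ and $6$; hence $f_k$ is $2$-strongly convex and $6$-smooth, while $\nabla f_k(x) = (4[x]_1 - 2[x]_2,\, 4[x]_2 - 2[x]_1)$ has each coordinate bounded by $6$ in absolute value over the box, so $\norm{\nabla f_k(x)}_2 \leq 6\sqrt{2}$. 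Finally, for $\ell = i$, expanding gives $f_i(x) = 2[x]_1^2 + [x]_2^2 - 2[x]_1 + 1$, with Hessian $\mathrm{diag}(4,2)$; thus $f_i$ is $2$-strongly convex and $4$-smooth, and $\nabla f_i(x) = (4[x]_1 - 2,\, 2[x]_2)$ has norm at most $\sqrt{36 + 4} = \sqrt{40} \leq 6\sqrt{2}$ on $[-1,1]^2$.

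Taking the worst case across the three types yields the uniform constants $\mu = 2$, $L = 6$, $G = 6\sqrt{2}$ claimed in the statement. There is no genuine obstacle here: the corollary is a routine verification whose only role is to certify that the hard instance built in the proof of Theorem~\ref{thm:lowerbound} lies in the $\mathcal{O}(1)$-parameter regime. The only point deserving a moment's care is the $\ell = i$ gradient bound, since that function is not centered at the origin; but $\sqrt{40} \leq \sqrt{72} = 6\sqrt{2}$ settles it, and the constant $6\sqrt{2}$ is in fact tight, being attained by $\nabla f_k$ at a vertex of the box.
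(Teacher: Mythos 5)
Your proof is correct and follows the same route as the paper: read off $\mu$ and $L$ from the (constant) Hessians and bound $G$ by maximizing $\norm{\nabla f_\ell}_2$ over the box. One small point worth flagging: for $\ell = i$, the paper's appendix contains a slip — it writes $\nabla f_i(x) = 2(2[x]_1-1,\, 2[x]_2)$ and claims $\norm{\nabla f_i}_2 \leq 4$, whereas the correct gradient is $(4[x]_1-2,\, 2[x]_2)$ with maximum norm $\sqrt{40}$ on $[-1,1]^2$; you compute this correctly and observe $\sqrt{40}\le 6\sqrt{2}$, so the stated uniform constant $G = 6\sqrt{2}$ survives. Your remark that the box maximum of $\norm{\nabla f_\ell}_2$ is attained at a vertex (since $\norm{\nabla f_\ell}_2^2$ is a convex quadratic) is a clean justification the paper leaves implicit.
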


\underline{Let the initial point of $\mathcal{A}$ be $\hat{x}_0 = (0,0) \in [-1,1]^2$.} 

Notice that $\nabla f_\ell(0,0) = (0,0)$ for each $\ell \neq i$. Since $\mathcal{A}$ is a natural first-order method then Definition~\ref{d:nat_method}
implies that $\mathcal{A}$ always remains that $(0,0)$ at all stages $j \leq i-1$. The latter is formally stated and established in Lemma~\ref{c:11}.
\begin{restatable}{lemma}{celeven}\label{c:11}
For any stage $j \in [i-1]$,
\begin{itemize}
\item $[\hat{x}_j]_{1} = [\hat{x}_j]_{2} =0$
\item $[x_j^t]_{1} = [x^t_j]_{2} =0$ for all rounds $t \in [T_j]$.
\end{itemize}
\end{restatable}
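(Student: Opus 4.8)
The plan is to prove Lemma~\ref{c:11} by an induction — ordered lexicographically on the pair (stage $j$, round $t$) — exploiting two facts: every one of $f_1,\ldots,f_{i-1}$ has a vanishing gradient at the origin, and Definition~\ref{d:nat_method} forces each new point of $\mathcal{A}$ to lie in $\Pi_{\mathcal{D}}$ of the linear span of previously-generated points together with the gradients just computed.

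First I would record the arithmetic observation that $\nabla f_\ell(0,0) = (0,0)$ for every $\ell \in [i-1]$. This is immediate from the construction: if $\ell \neq k$ then $f_\ell(x) = ([x]_1)^2 + ([x]_2)^2$, whose gradient $(2[x]_1, 2[x]_2)$ vanishes at the origin; if $\ell = k$ then $f_k$ has the extra term $([x]_1-[x]_2)^2$, whose gradient $(2[x]_1 - 2[x]_2,\, 2[x]_2 - 2[x]_1)$ also vanishes at the origin; and since $i > 1$, the index $i$ never occurs among $1,\ldots,i-1$, so the branch ``$\ell = i$'' is irrelevant. Crucially this holds for \emph{every} realization of $k \sim \mathrm{Unif}(1,\ldots,i-1)$, which is what allows the argument to go through for a randomized $\mathcal{A}$.

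Then I would run the induction. The base case is $\hat{x}_0 = (0,0)$, which is assumed. For the inductive step, fix a stage $j \leq i-1$ and a round $t \in [T_j]$, and suppose every point produced so far — i.e., every element of $\mathrm{PP}_j^{t-1} = \left(\cup_{m \leq j-1}\cup_{\tau \leq T_m} x_m^\tau\right) \cup \left(\cup_{\tau \leq t-1} x_j^\tau\right) \cup \hat{x}_0$ — equals the origin. By Definition~\ref{d:nat_method}, each query $q \in Q_j^t$ satisfies $q_{\mathrm{value}} \in \mathrm{PP}_j^{t-1} = \{(0,0)\}$ and $q_{\mathrm{index}} \leq j \leq i-1$, so the gradient it computes is $\nabla f_{q_{\mathrm{index}}}(0,0) = (0,0)$. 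Hence the set $\left(\cup_{q \in Q_j^t}\nabla f_{q_{\mathrm{index}}}(q_{\mathrm{value}})\right) \cup \mathrm{PP}_j^{t-1}$ is just $\{(0,0)\}$, its linear span $S$ is $\{(0,0)\}$, and since $(0,0) \in \mathcal{D} = [-1,1]^2$ we get $x_j^t \in \Pi_{\mathcal{D}}(S) = \{(0,0)\}$, i.e. $[x_j^t]_1 = [x_j^t]_2 = 0$. Applying the same reasoning to the final-output clause — $\hat{x}_j \in \Pi_{\mathcal{D}}(\text{lin.\ span of } \mathrm{PP}_j^{T_j})$ with $\mathrm{PP}_j^{T_j} = \{(0,0)\}$ — gives $[\hat{x}_j]_1 = [\hat{x}_j]_2 = 0$, closing the induction.

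The only thing requiring care, rather than a genuine obstacle, is the bookkeeping of the nested induction: $\mathrm{PP}_j^{t-1}$ mixes iterates from all earlier stages with earlier rounds of the current stage, so the induction must be ordered so that the hypothesis covers exactly this set, and the ``$\hat{x}_j$ from $\mathrm{PP}_j^{T_j}$'' clause of Definition~\ref{d:nat_method} must be handled as its own step. Everything else — that the linear span of $\{(0,0)\}$ is $\{(0,0)\}$ and that $\Pi_{\mathcal{D}}(0,0) = (0,0)$ since the origin lies in $[-1,1]^2$ — is trivial.
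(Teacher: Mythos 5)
Your proof is correct and follows essentially the same approach as the paper's: it rests on the same two observations (that $\nabla f_\ell(0,0)=(0,0)$ for every $\ell\le i-1$ and for every realization of $k$, and that Definition~\ref{d:nat_method} confines each new point to $\Pi_{\mathcal{D}}$ of the span of previously generated points and their gradients). The only difference is presentational — you run a single induction ordered lexicographically on $(j,t)$, whereas the paper nests an inner induction on the round $t$ inside an outer induction on the stage $j$ — but these are the same argument.
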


The proof of Lemma~\ref{c:11} lies in Section~\ref{s:omit}. Its proof is based on the simple observation that if $\mathcal{A}$ has stayed at $(0,0)$ at all stages $\ell \leq i-2$. Then at stage $\ell+1 \leq i-1$, Item~$1$ of Definition~\ref{d:nat_method} implies that $\mathcal{A}$ always queries $\nabla f_{j}(0,0)$ for some $j \leq i-1$ meaning that $\nabla f_{j}(0,0) = (0,0)$. Then Item~$2$ and~$3$ of Definition~\ref{d:nat_method} directly imply that $x_\ell^t = (0,0)$ and $\hat{x}_\ell = (0,0)$. 
 
\begin{corollary}\label{c:stratis}
With probability greater than $1/2$, the function $f_k$ is never queried during state $i$. In other words, $q_{\mathrm{index}}\neq k$ for all $q \in \cup_{t \in T_i} Q_i^t$. 
\end{corollary}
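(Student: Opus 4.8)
The plan is to show that the set of function indices in $[i-1]$ that $\mathcal{A}$ queries during stage $i$ is, first, independent of the random index $k\sim\mathrm{Unif}(1,\dots,i-1)$ and, second, of size less than $(i-1)/2$; since $k$ is uniform on $[i-1]$ and independent of that set, it lands outside it with probability strictly larger than $1/2$.

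First I would introduce a \emph{reference run}: the trajectory $\mathcal{A}$ would follow on the sequence obtained from $f_1,\dots,f_n$ by replacing the $k$-th function with the plain quadratic $\bar f(x)=([x]_1)^2+([x]_2)^2$, so that all of the first $i-1$ functions equal $\bar f$ while $f_i$ is unchanged. This sequence does not mention $k$ at all, so the set $\hat S\subseteq[i-1]$ of function indices that the reference run queries during stage $i$ is a (possibly random, via $\mathcal{A}$'s internal coins) set that is independent of $k$. The crucial claim is: \emph{if $k\notin\hat S$, then the run of $\mathcal{A}$ on the actual constructed sequence coincides with the reference run throughout stage $i$; in particular $q_{\mathrm{index}}\neq k$ for every $q\in\cup_{t\in T_i}Q_i^t$.} I would prove this by induction on the round $t\in[T_i]$. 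The two runs enter stage $i$ in the same internal state, because by Lemma~\ref{c:11} both sit at $(0,0)$ for all stages $j\le i-1$ and $\nabla f_k(0,0)=\nabla\bar f(0,0)=(0,0)$, so every gradient returned in stages $1,\dots,i-1$ agrees. Given that the two runs have agreed through round $t-1$ (same points, same returned gradients, same coins), $\mathcal{A}$ issues the same query set $Q_i^t$ in both — it is the same algorithm fed the same information, and $Q_i^t$ is committed before the round's responses are observed. Since $k\notin\hat S$, this $Q_i^t$ contains no query with $q_{\mathrm{index}}=k$, and every function with index $\neq k$ is identical in the two sequences, so the returned gradients agree and hence $x_i^{t+1}$ agrees. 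This closes the induction and the claim.

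Finally I would invoke the counting bound from the preamble of the proof of Theorem~\ref{thm:lowerbound}: the stage $i$ was selected so that few first-order oracles are spent there, and since $\mathcal{A}$ must in fact query $f_i$ at stage $i$ (otherwise its view at stage $i$ is unchanged from stage $i-1$ and it stays at $(0,0)$, which is $\Omega(1/n^2)$-far from $g_i(x_i^\star)$, making the theorem's conclusion hold even more directly), the number of queried indices in $[i-1]$ is at most $\sum_{t}|Q_i^t|-1<(i-1)/2$; hence $\bE|\hat S|<(i-1)/2$. Combining with the independence of $\hat S$ and $k$,
\[
\mathrm{Pr}\!\left[\,q_{\mathrm{index}}=k \text{ for some }q\in\textstyle\cup_{t\in T_i}Q_i^t\,\right]\ \le\ \mathrm{Pr}\!\left[k\in\hat S\right]\ =\ \frac{\bE|\hat S|}{\,i-1\,}\ <\ \frac12,
\]
which is exactly the assertion.

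The main obstacle is the independence step: one must argue carefully that the identity of the indices queried during stage $i$ cannot secretly encode information about $k$. This is precisely where the \emph{natural} first-order method restriction (Definition~\ref{d:nat_method}) is used — queries are issued at previously generated points and $x_i^{t+1}$ lies in the span of previous points and previously computed gradients, all of which the induction keeps $k$-independent — together with the elementary fact that $f_k$ and $\bar f$ return identical gradients at every point $\mathcal{A}$ probes before its first index-$k$ query. A minor secondary point is the mild entanglement between the index $i$ and the location of the shifted function $f_i$, which is already resolved by the pigeonhole choice of $i$ preceding the corollary.
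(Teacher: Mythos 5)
Your proof is correct, and it fills a gap that the paper's own terse proof leaves implicit. The paper simply writes: the stage $i$ was chosen so that $\sum_{t}|Q_i^t|\leq i/2$, and since $k\sim\mathrm{Unif}(1,\dots,i-1)$ it follows that $\Pr[q_{\mathrm{index}}\neq k \text{ for all } q]\geq 1/2$. That one-line argument treats the set of queried indices as though it were a fixed, $k$-independent set of size at most $i/2$; but since the algorithm's trajectory during stage $i$ is a priori allowed to depend on which gradients it sees, and $f_k$ differs from the other functions, this independence genuinely requires justification. Your reference-run coupling is precisely the right formalization: it exhibits a $k$-independent random set $\hat S$ (determined only by $\mathcal{A}$'s coins on the sequence where $f_k$ is replaced by the plain quadratic $\bar f$) and shows by induction on rounds, using Lemma~\ref{c:11} and the items of Definition~\ref{d:nat_method}, that on the event $k\notin\hat S$ the two runs coincide through all of stage $i$. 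That is the content the paper's proof relies on without stating.

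Two small notes on the tail of your argument. First, you correctly spot that the paper's arithmetic is a hair loose: $\sum_t|Q_i^t|\leq i/2$ paired with $k$ uniform on $i-1$ values gives $\Pr[k\in\hat S]\leq \tfrac{i/2}{i-1}$, which is slightly \emph{above} $1/2$, not below. Your patch (at least one query must go to $f_i$, reducing the count over $[i-1]$ to at most $i/2-1<(i-1)/2$) is sound, but it quietly converts the statement into a disjunction: either the corollary's bound holds, or $\mathcal{A}$ never queries $f_i$, in which case $\hat x_i=(0,0)$ and the theorem's lower bound follows directly. That disjunction is all the theorem needs, but strictly speaking it is not a proof of the corollary as a standalone claim; an alternative (and closer to what the paper likely intended) is to tighten the pigeonhole to select $i$ with $\sum_t|Q_i^t|\leq (i-1)/2$, which the same counting permits and which yields $\Pr[k\in\hat S]\leq 1/2$ without any case split. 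Second, you state $\Pr[k\in\hat S]=\bE|\hat S|/(i-1)$; for safety this should be $\leq$ rather than $=$, since $\hat S$ is the set of distinct indices queried while $\sum_t|Q_i^t|$ counts oracle calls — but the inequality is the direction you need, so nothing is harmed.
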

\begin{proof}
By the selection of stage $i \in [n]$, we know that $\sum_{t \in T_i} |Q_i^t|\leq i /2$.  Since $k$ was sampled uniformly at random in $\{1,\ldots,i-1\}$, $\mathrm{Pr}\left[ q_{\mathrm{index}} \neq k~~\text{for all }q_\in \cup_{t \in T_i}Q_i^t \right] \geq 1/2$.
\end{proof}

Notice that for all $\ell \neq i$, $\nabla f_\ell(\cdot,0) = (\cdot , 0)$. The main idea of the proof is that in case $k$ is never queried during stage $i$, ($q_{\mathrm{index}}\neq k$ for all $q \in \cup_{t \in T_i} Q_i^t$) then all FOs during stage admit the form $\nabla f_{\mathrm{index}}(q_{\mathrm{value}}) = \nabla f_{\mathrm{index}}(\cdot,0) = (\cdot,0)$. The latter implies that if $q_{\mathrm{index}}\neq k$ for all $q \in \cup_{t \in T_i} Q_i^t$ then $[\hat{x}_i]_2 = 0$.

\begin{restatable}{lemma}{lthree}\label{l:29}
In case $q_{\mathrm{index}}\neq k$ for all $q \in \cup_{t \in T_i} Q_i^t$ then $[\hat{x}_{i}]_{2} =0$. 
\end{restatable}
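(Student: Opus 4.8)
The plan is to prove, by induction on the round index $t$ within stage $i$, that every point produced by $\mathcal{A}$ during stage $i$ has vanishing second coordinate, and then to read off $[\hat{x}_i]_2 = 0$ from the last bullet of Definition~\ref{d:nat_method}. The driving fact is the elementary observation that for every index $\ell \neq k$ the gradient $\nabla f_\ell$ maps the set $\{x : [x]_2 = 0\}$ into $\mathbb{R}\times\{0\}$: a direct computation gives $\nabla f_\ell(x) = (2[x]_1, 2[x]_2)$ for $\ell \notin \{i,k\}$ and $\nabla f_i(x) = (4[x]_1 - 2, 2[x]_2)$, so in both cases $[x]_2 = 0$ forces $[\nabla f_\ell(x)]_2 = 0$. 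Only $\nabla f_k$ — whose second coordinate equals $4[x]_2 - 2[x]_1$ because of the cross term $([x]_1-[x]_2)^2$ — fails this property, which is precisely why the hypothesis $q_{\mathrm{index}} \neq k$ is needed.

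For the base case, $\mathrm{PP}_i^{0}$ consists of the earlier-stage iterates $x_m^\tau$ with $m \leq i-1$, the initialization $x_i^0 = \hat{x}_{i-1}$, and $\hat{x}_0$; by Lemma~\ref{c:11} and the choice $\hat{x}_0 = (0,0)$ each of these equals $(0,0)$ and in particular has second coordinate $0$. For the inductive step, assume every point of $\mathrm{PP}_i^{t-1}$ has second coordinate $0$. Then for each query $q \in Q_i^t$ we have $q_{\mathrm{value}} \in \mathrm{PP}_i^{t-1}$ by Item~1 of Definition~\ref{d:nat_method}, so $[q_{\mathrm{value}}]_2 = 0$; and since $q_{\mathrm{index}} \neq k$ by hypothesis, the observation above gives $[\nabla f_{q_{\mathrm{index}}}(q_{\mathrm{value}})]_2 = 0$. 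Consequently the linear span $S$ of $\big(\cup_{q \in Q_i^t} \nabla f_{q_{\mathrm{index}}}(q_{\mathrm{value}})\big) \cup \mathrm{PP}_i^{t-1}$ is contained in $\mathbb{R}\times\{0\}$, because a linear combination of vectors with vanishing second coordinate again has vanishing second coordinate. Since $x_i^t \in \Pi_{\mathcal{D}}(S)$ with $\mathcal{D} = [-1,1]^2$, and the Euclidean projection of a point $(a,0)$ onto $[-1,1]^2$ is $(\max\{-1,\min\{1,a\}\}, 0)$, we conclude $[x_i^t]_2 = 0$; hence every point of $\mathrm{PP}_i^{t} = \mathrm{PP}_i^{t-1} \cup \{x_i^t\}$ has second coordinate $0$, completing the induction.

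Running the induction through $t = T_i$, the linear span of $\mathrm{PP}_i^{T_i}$ lies in $\mathbb{R}\times\{0\}$, so the third bullet of Definition~\ref{d:nat_method} forces $\hat{x}_i \in \Pi_{\mathcal{D}}(\mathbb{R}\times\{0\}) \subseteq \mathbb{R}\times\{0\}$, i.e.\ $[\hat{x}_i]_2 = 0$. The entire argument is pathwise on the event $\{q_{\mathrm{index}}\neq k \text{ for all } q \in \cup_{t}Q_i^t\}$, so randomness of $\mathcal{A}$ poses no difficulty. The only points needing care are the bookkeeping of $\mathrm{PP}_i^{t-1}$ — which mixes earlier-stage iterates (handled by Lemma~\ref{c:11}) with current-stage iterates (handled by the induction hypothesis) — and the verification that neither taking linear spans nor projecting onto the box $\mathcal{D}$ can move us off the line $\{x : [x]_2 = 0\}$.
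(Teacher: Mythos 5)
Your proof is correct and follows essentially the same route as the paper's: an induction on the round index $t$ within stage $i$, using Lemma~\ref{c:11} to handle the points inherited from earlier stages, and the observation that every $\nabla f_\ell$ with $\ell\neq k$ maps $\{x:[x]_2=0\}$ into $\mathbb{R}\times\{0\}$ to propagate the invariant through linear spans and projections onto $[-1,1]^2$. You spell out a couple of steps the paper leaves implicit (that projection onto the box preserves $[x]_2=0$, and that the argument is pathwise on the non-query event), but the structure and key observations are the same.
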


The proof of Lemma~\ref{l:29} is based on the intuition that we presented above. Its proof is presented in Appendix~\ref{s:omit}.

To this end we are ready to complete the proof of Theorem~\ref{thm:lowerbound}. Combining Lemma~\ref{l:29} with Corollary~\ref{c:stratis} we get that
\begin{equation}\label{eq:prob}
\mathrm{Pr}\left[[\hat{x}_i]_{2} = 0\right] \geq 1/2.
\end{equation}

Up next we lower bound the error $g_i(\hat{x}_i) - g_i(x^\star_i)$ in case $[\hat{x}_i]_{2} =0$. 

By the definition of the sequence $f_1,\ldots,f_k,\ldots, f_i$, we get that
\begin{eqnarray*}
 g_i(x) &:=& \left([x]_1\right)^2 + \frac{1}{i}\left([x]_1 - 1\right)^2 + \frac{1}{i}\left([x]_1 - [x]_2\right)^2 + \left([x]_2\right)^2\\
\end{eqnarray*}

In case $[\hat{x}_i]_{2} =0$ we get that
\begin{eqnarray*}
g_i(\hat{x_i}) &=& \left(1+\frac{1}{i}\right)\left([\hat{x}_i]_1\right)^2+\frac{1}{i}\left([\hat{x_i}]_1-1\right)^2\\
 &\geq& \min_{w\in [-1,1]}\left[\left(1+\frac{1}{i}\right)w^2+\frac{1}{i}(w-1)^2 \right]~~~\text{notice that } w^\star = \frac{1}{i+2}\\
 &=& \frac{i+1}{i(i+2)^2} + \frac{(i+1)^2}{i(i+2)^2} = \frac{i+1}{i(i+2)}
\end{eqnarray*}
At the same time, the minimum $g_i(x_i^\star)$ equals,
\begin{eqnarray*}
  g_i(x^\star_i)&=& ([x_i^\star]_1)^2 + \frac{1}{i}\left([x_i^\star]_1 - 1\right)^2 + \frac{1}{i}\left([x_i^\star]_1 - [x_i^\star]_2\right)^2 + ([x_i^\star]_2)^2\
  \\
  &=& \min_{w,z\in [-1,1]}\left[w^2 + \frac{1}{i}\left(w - 1\right)^2 + \frac{1}{i}\left(w - z\right)^2 + z^2\right]~~~\text{notice that } (w^\star,z^\star) = \left(\frac{i+1}{i^2+3i+1}, \frac{1}{i^2+3i+1}\right)\\
  &=& \left(\frac{i+1}{i^2 + 3i +1}\right)^2 + \frac{1}{i}\left( \frac{i+1}{i^2 + 3i +1} -1\right)^2 + \frac{1}{i}\left(\frac{i+1}{i^2 + 3i +1} - \frac{1}{i^2 + 3i +1} \right)^2 + \left(\frac{1}{i^2 + 3i +1}\right)^2
\end{eqnarray*}
By taking the difference $g_i(\hat{x}_i) - g_i(x^\star_i)$ we get that
\begin{eqnarray*}
g_i(\hat{x_i}) - g_i(x^\star_i) &\geq& \min_{w\in [-1,1]}\left[w^2+\frac{1}{i}(w-1)^2 \right] - \min_{w,z\in [-1,1]^2}\left[\left(1-\frac{1}{i}\right)w^2 + \frac{1}{i}\left(w - 1\right)^2 + \frac{1}{i}\left(w - z\right)^2\right]\\
&=& \frac{i+1}{i(i+2)} - \left(\frac{i+1}{i^2 + 3i +1}\right)^2 - \frac{1}{i}\left( \frac{i+1}{i^2 + 3i +1} -1\right)^2 - \frac{1}{i}\left(\frac{i+1}{i^2 + 3i +1}- \frac{1}{i^2 + 3i +1} \right)^2\\
&-& \left(\frac{1}{i^2 + 3i +1}\right)^2\\
&=& \frac{1}{i^4 + 5i^3 + 7i^2 + 2i}
\end{eqnarray*}
We finally get that
\begin{eqnarray*}
\bE\left[ g_i(\hat{x_i}) - g_i(x^\star_i) \right] &=& \mathrm{Pr}\left[[\hat{x}_i]_{2}=0 \right] \cdot \bE\left[ g_i(\hat{x_i}) - g_i(x^\star_i)~~|~~ [\hat{x}_i]_{2}=0 \right]\\
&+& \mathrm{Pr}\left[[\hat{x}_i]_{2} \neq 0 \right] \cdot \bE\left[ g_i(\hat{x_i}) - g_i(x^\star_i)~~|~~ [\hat{x}_i]_{2} \neq 0 \right]\\
&\geq& \frac{1}{2} \cdot \frac{1}{i^4 + 5i^3 + 7i^2 + 2i}\\
&\geq& \Omega\left(1/n^4\right)
\end{eqnarray*}
\end{proof}
\subsection{Omitted Proofs}\label{s:omit}

\constants*
\begin{proof}
Notice that Hessians
\begin{itemize}
\item 
$\nabla^2 f_\ell(x) = \begin{pmatrix}
2 & 0 \\
0 & 2
\end{pmatrix}$ for $\ell \neq i,k$. Meaning that $f_\ell$ is $2$-strongly convex and $2$-smooth. 

\item 
$\nabla^2 f_k(x) = \begin{pmatrix}
4 & -2 \\
-2 & 4
\end{pmatrix}$ for $\ell \neq i,k$. Meaning that $f_\ell$ is $2$-strongly convex and $6$-smooth.

\item 
$\nabla^2 f_i(x) = \begin{pmatrix}
4 & 0 \\
0 & 2
\end{pmatrix}$ for $\ell \neq i,k$. Meaning that $f_\ell$ is $2$-strongly convex and $4$-smooth.
\end{itemize}

Respectively notice that the gradients 

\begin{itemize}
\item 
$\nabla f_\ell(x) = 2 ([x]_1,[x]_2)$ and thus $\max_{[x]_1,[x]_2 \in [-1,1]}\norm{\nabla f_\ell(x)}_2 \leq  2\sqrt{2}$. As a result, $f_\ell(x)$ is $2\sqrt{2}$-Lipschtiz in $[-1,1]^2$.

\item 
$\nabla f_k(x) = 2 (2 [x]_1 - [x]_2,2[x]_2 - [x]_1)$ and thus $\max_{[x]_1,[x]_2 \in [-1,1]}\norm{\nabla f_k(x)}_2 \leq  6\sqrt{2}$. As a result, $f_k(x)$ is $6\sqrt{2}$-Lipschtiz in $[-1,1]^2$.

\item 
$\nabla f_i(x) = 2 (2 [x]_1 - 1,2[x]_2)$ and thus $\max_{[x]_1,[x]_2 \in [-1,1]}\norm{\nabla f_i(x)}_2 \leq  4$. As a result, $f_k(x)$ is $4$-Lipschtiz in $[-1,1]^2$.
\end{itemize}
\end{proof}

\celeven*
\begin{proof}[Proof of Lemma~\ref{c:11}]
By the construction of the sequence $f_1,\ldots,f_n$, we get that for all $\ell \leq i-1$, 
\begin{itemize}
\item $\nabla f_\ell(x) =  2\left([x]_1,[x]_2\right)$ for $\ell \neq k,i$
\item $\nabla f_k(x) = 2 \left(2[x]_1 - [x]_2, 2[x]_2 - [x]_1\right)$
\end{itemize}

Thus in case $x= (0,0)$ then 
$\nabla f_\ell(x) = \nabla f_k(x) = (0,0)$.

Up next we inductively establish Lemma~\ref{c:11}. For the induction base we first establish that at stage $\ell:=1$,

\begin{corollary}\label{c:289}
For stage $\ell =1$,    
\begin{itemize}
\item $x^t_1 = (0,0)$ for all rounds $t \in [T_1]$.
\item $\hat{x}_1 =(0,0)$

\end{itemize}
\end{corollary}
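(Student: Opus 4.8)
The plan is a short induction on the round index $t$, resting on two observations. First, $\nabla f_1(0,0) = (0,0)$: whether or not the random index $k$ equals $1$, the function $f_1$ is one of $([x]_1)^2 + ([x]_2)^2$ or $([x]_1)^2 + ([x]_2)^2 + ([x]_1 - [x]_2)^2$, both quadratics minimized at the origin. Second, at stage $\ell = 1$ the set of admissible query points is a single point, namely $\hat{x}_0 = (0,0)$, by Definition~\ref{d:nat_method}.

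First I would unpack Definition~\ref{d:nat_method} at $i = 1$. Here $\mathrm{PP}_1^{t-1} = \left(\cup_{\tau \leq t-1} x_1^\tau\right) \cup \{\hat{x}_0\}$, so $\mathrm{PP}_1^0 = \{\hat{x}_0\} = \{(0,0)\}$. For the base case $t = 1$, every query $q \in Q_1^1$ must satisfy $q_{\mathrm{index}} \leq 1$ (hence $q_{\mathrm{index}} = 1$) and $q_{\mathrm{value}} \in \mathrm{PP}_1^0$ (hence $q_{\mathrm{value}} = (0,0)$), so each computed gradient equals $\nabla f_1(0,0) = (0,0)$. Therefore the set $S$ — the linear span of the computed gradients together with $\mathrm{PP}_1^0$ — is just $\{(0,0)\}$, and since $(0,0) \in \mathcal{D} = [-1,1]^2$ we get $\Pi_{\mathcal{D}}(S) = \{(0,0)\}$, forcing $x_1^1 = (0,0)$. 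Because the constraint defining natural methods is a hard (deterministic) constraint on the queries, this conclusion holds on every sample path, which is exactly what is needed for randomized $\mathcal{A}$.

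For the inductive step I would assume $x_1^\tau = (0,0)$ for all $\tau \leq t - 1$; then $\mathrm{PP}_1^{t-1} = \{(0,0)\}$ again, and the identical argument — all queries at the origin with index $1$, all gradients vanishing, $S = \{(0,0)\}$, and the projection acting as the identity at the origin — gives $x_1^t = (0,0)$. This proves the first bullet for all $t \in [T_1]$. Finally, $\hat{x}_1 \in \Pi_{\mathcal{D}}(S)$ where $S$ is the linear span of $\mathrm{PP}_1^{T_1} = \{(0,0)\}$, so $\hat{x}_1 = (0,0)$, which is the second bullet; this furnishes the $\ell = 1$ seed for the induction behind Lemma~\ref{c:11}.

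There is no real obstacle in this corollary: the only points requiring care are reading off from Definition~\ref{d:nat_method} that a stage-$1$ query can only be evaluated at $\hat{x}_0$, noting that the linear span of a family of zero vectors is the trivial subspace $\{0\}$ (so no new directions are ever generated), and observing that $(0,0)$ lies in the interior of $\mathcal{D}$ so the projection step cannot move it. The argument mirrors the general inductive step in the proof of Lemma~\ref{c:11}; this corollary merely records the base of that induction.
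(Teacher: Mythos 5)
Your proof is correct and follows essentially the same inductive argument as the paper's: at stage $1$ all admissible queries are at $\hat{x}_0 = (0,0)$ with index $1$, all computed gradients vanish, the linear span stays $\{(0,0)\}$, and projection onto $\mathcal{D}$ fixes the origin. One small improvement over the paper's version: you explicitly handle the case $k=1$ (the paper writes $\nabla f_1(x) = 2x$, which silently assumes $k \neq 1$, though the conclusion $\nabla f_1(0,0) = (0,0)$ holds either way), and you note that the argument holds pathwise for randomized methods.
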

\begin{proof}
At round $t:=1$ of stage $\ell:=1$, Item~1 of Definition~\ref{d:nat_method} ensures that for all FOs $q \in Q_1^1$ 
\[q_{\mathrm{index}} = 1 \text{ and } q_{\mathrm{value}} = (0,0)\]
Since $\nabla f_1(x) = 2x$, the latter implies that $\nabla f_{q_{\mathrm{index}}}(q_{\mathrm{value}}) = (0,0)$ for all $q \in Q_1^1$. Thus Item~$2$ of Definition~\ref{d:nat_method} ensures that $x_1^1 = (0,0)$.

We prove Corollary~\ref{c:289} through induction.

\begin{itemize}
\item \underline{Induction Base:} $x_1^1 = (0,0)$
\item \underline{Induction Hypothesis:} $x_1^{\tau} =(0,0)$ for all $\tau \in \{1, \ldots, t-1\}$.
\item \underline{Induction Step:} $x_1^t = (0,0)$
\end{itemize}

Item~$1$ of Definition~\ref{d:nat_method} ensures that for all $q \in Q_1^t$, $q_\mathrm{value} \in \left(\cup_{\tau \leq t-1} x_1^\tau\right) \cup \hat{x}_0$ and thus by the inductive hypothesis $q_\mathrm{value} = (0,0)$. Thus
\[\nabla f_{q_\mathrm{index}}(q_{\mathrm{value}}) =(0,0) ~~\text{for all }q =(q_\mathrm{index},q_{\mathrm{value}})  \in \cup_{\tau\leq t} Q_1^\tau \] 
Then Item~$2$ of Definition~\ref{d:nat_method} implies that $x_1^t=(0,0)$.

To this end we have established that $x_1^t = (0,0)$ for all $t \in [T_1]$. Then Item~$3$ of Definition~\ref{d:nat_method} implies that $\hat{x}_1 = (0,0)$.
\end{proof}

We complete the proof of Lemma~\ref{c:11} with a similar induction.

\begin{itemize}
\item \underline{Induction Base:} $x_1^{t} =(0,0)$ for all $t \in [T_1]$ and $\hat{x}_1 = (0,0)$.
\item \underline{Induction Hypothesis:} $x_\ell^{t} =(0,0)$ for all $t \in [T_\ell]$ and $\hat{x}_\ell = (0,0)$, for all stages $\ell \leq i-2$. 
\item \underline{Induction Step:} $x_{\ell+1}^{t} =(0,0)$ for all $t \in [T_{\ell+1}]$ and $\hat{x}_{\ell+1} = (0,0)$
\end{itemize}
Let us start with round $t :=1$ of stage $\ell+1$. Item~$1$ of Definition~\ref{d:nat_method} together with the inductive hypothesis ensure that for any FO $q \in Q_{\ell+1}^1$, $q_{\mathrm{value}} = (0,0)$. Since $q_{\mathrm{index}} \leq \ell + 1 \leq i-1$ we get that
\[\nabla f_{q_\mathrm{index}}(q_\mathrm{value}) = (0,0)~~~\text{for all queries }q \in Q_{\ell+1}^1.\]
The latter together with Item~$2$ of Definition~\ref{d:nat_method} imply 
\[x^1_{\ell+1} = (0,0).\]

Let us inductively assume that $x_{\ell+1}^t = (0,0)$. Then we the exact same argument as before we get that
\[\nabla f_{q_\mathrm{index}}(q_\mathrm{value}) = (0,0)~~~\text{for all queries }q \in Q_{\ell+1}^{t+1}.\]
Then again Item~$2$ of Definition~\ref{d:nat_method} implies that
\[x_{\ell + 1}^{t+1} = (0,0).\]
To this end we have established that $x_{\ell+1}^t = (0,0)$ for all $t \in [T_{\ell+1}]$. Thus Item~$3$ of Definition~\ref{d:nat_method} implies that $\hat{x}_{\ell+1} = (0,0)$. The latter completes the induction step and the proof of Lemma~\ref{c:11}.
\end{proof}

\lthree*

\begin{proof}[Proof of Lemma~\ref{l:29}]
First notice that 
\begin{itemize}
\item $\nabla f_\ell(x) =  2\left([x]_1,[x]_2\right)$ for $\ell \neq k,i$
\item $\nabla f_i(x) = 2(2[x]_1-1,[x]_2)$
\end{itemize}
Thus for any $x = \left([x]_1, 0\right)$, $\nabla f_{\ell}(x) = 2([x]_1,0)$ and $\nabla f_i(x) = 2(2[x]_1-1,0)$. Using the latter observation and the Lemma~\ref{c:11} we inductively establish that in case $q_{\mathrm{index}}\neq k$ for all $q \in \cup_{t \in T_i} Q_i^t$ then
\[x_i^t = \left( [x_i^t]_1, 0 \right).\]

We start by establishing the latter for $t = 1$. 
\begin{corollary}
In case $q_{\mathrm{index}}\neq k$ for all $q \in \cup_{t \in T_i} Q_i^t$ then
$x_i^t = \left( [x_i^t]_1, 0 \right)$.
\end{corollary}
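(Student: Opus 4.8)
The plan is to establish this base case ($t=1$) of the induction directly from Lemma~\ref{c:11} and the definition of a natural first-order method (Definition~\ref{d:nat_method}), reading off the relevant gradients explicitly.

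First I would observe that, by Item~$1$ of Definition~\ref{d:nat_method}, every query $q \in Q_i^1$ at the first round of stage $i$ has $q_{\mathrm{value}} \in \mathrm{PP}_i^0 = \left(\cup_{m \leq i-1}\cup_{\tau \leq T_m} x_m^\tau\right) \cup \hat{x}_0$. By Lemma~\ref{c:11} (together with the assumption $\hat{x}_0 = (0,0)$), every point in that set equals the origin, so $q_{\mathrm{value}} = (0,0)$ for all $q \in Q_i^1$.

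Next I would compute the gradients that can appear. By hypothesis $q_{\mathrm{index}} \neq k$ and $q_{\mathrm{index}} \leq i$, so either $q_{\mathrm{index}} \notin \{i,k\}$, giving $\nabla f_{q_{\mathrm{index}}}(0,0) = (0,0)$, or $q_{\mathrm{index}} = i$, giving $\nabla f_i(0,0) = 2(2\cdot 0 - 1,0) = (-2,0)$. In either case the computed gradient has vanishing second coordinate.

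Finally, Item~$2$ of Definition~\ref{d:nat_method} places $x_i^1$ in $\Pi_{\cD}(S)$, where $S$ is the linear span of $\left(\cup_{q \in Q_i^1}\nabla f_{q_{\mathrm{index}}}(q_{\mathrm{value}})\right)\cup \mathrm{PP}_i^0$. Both parts consist of vectors whose second coordinate is $0$ — the gradients by the previous step, the previous points by Lemma~\ref{c:11} — so $S \subseteq \{(w,0) : w \in \bR\}$. Since $\cD = [-1,1]^2$, projecting any point $(w,0)$ onto $\cD$ returns a point of the form $([x_i^1]_1, 0)$, which is the claim; the inductive step for $t \geq 2$ is identical, replacing $\mathrm{PP}_i^0$ by $\mathrm{PP}_i^{t-1}$ and using the induction hypothesis to see that it again contains only points with zero second coordinate. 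The only subtlety — a minor one — is that the hypothesis excludes only the index $k$, so the index $i$, whose gradient at the origin is the lone nonzero one, must be treated separately; this is harmless because that gradient, though nonzero, still lies on the horizontal axis.
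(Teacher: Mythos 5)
Your proposal is correct and follows essentially the same route as the paper: use Lemma~\ref{c:11} and Item~1 of Definition~\ref{d:nat_method} to show every query point is the origin, check that every queried gradient (whether $q_{\mathrm{index}}=i$ or $q_{\mathrm{index}}\notin\{i,k\}$) has zero second coordinate, and conclude via Item~2 that $x_i^1$ lies in the projection of a subspace contained in the horizontal axis. The paper is merely terser, writing $\nabla f_{q_{\mathrm{index}}}(q_{\mathrm{value}})=(\cdot,0)$ without splitting into the two index cases that you spell out, and it defers the $t\geq 2$ induction to the surrounding proof of Lemma~\ref{l:29} rather than folding it into this corollary.
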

\begin{proof}
Lemma~\ref{c:11} ensures that for all stages $\ell \leq i-1$
\begin{itemize}
\item $x^t_{\ell} =(0,0)$ for all $t \in [T_{\ell}]$.
\item $\hat{x}_{\ell} =(0,0)$
\end{itemize}
The latter together with Item~$1$ of Definition~\ref{d:nat_method} imply that $q_{\mathrm{value}} = (0,0)$ for all $q \in Q_i^1$. Since $q_{\mathrm{index}}\neq k$ for all $q \in Q_i^1$ we get that
\[\nabla f_{q_{\mathrm{index}}}(q_\mathrm{value}) = (\cdot , 0) \text{ for all }q \in Q_i^1\]
Then Item~$2$ of Definition~\ref{d:nat_method} implies that $x_i^1 = (\cdot , 0)$.
\end{proof}
We complete the proof of Lemma~\ref{l:29} through an induction. 

\begin{itemize}
\item \underline{Induction Base:} $x_i^1 = (\cdot , 0)$
\item \underline{Induction Hypothesis:} $x_i^\tau = (\cdot ,0)$ for all $\tau \leq t-1$
\item \underline{Induction Step:} $x_i^{t+1} = (\cdot ,0)$
\end{itemize}
The induction hypothesis together with Lemma~\ref{c:11} and Item~$1$ of Definition~\ref{d:nat_method} imply that for each FO $q \in Q_i^t$, $q_{\mathrm{value}} = (\cdot,0)$. Since $k \neq q_{\mathrm{value}}$ for all $q \in Q_i^t$, we get that
\[\nabla f_{q_{\mathrm{index}}}(q_\mathrm{value}) = (\cdot, 0) \text{ for all }q \in Q_i^t\] 
Then Item~2 of Definition~\ref{d:nat_method} implies that $x_i^t = (\cdot ,0)$.

To this end our induction is completed and we have established that $x_i^t = (\cdot ,0)$ for all $t \in [T_i]$. Then the fact that $\hat{x}_i = (\cdot ,0)$ is directly implied by Item~$3$ of Definition~\ref{d:nat_method}.

\end{proof}
\newpage

\section{Sparse Stochastic Gradient Descent}
In this section we present the sparse version Stochastic Gradient Descent, called $\mathrm{SGD}$-$\mathrm{Sparse}$ (Algorithm~\ref{alg:sparseSGD}) that is able to ensure accuracy $\epsilon >0$ at each stage $i \in [n]$ with $\mathcal{O}\left(\frac{\mathcal{D}G^3 \log n}{\epsilon^2}\right)$ FO complexity. 

In Algorithm~\ref{alg:SGD} we first present SGD for specific stage $i \in [n]$.

\begin{algorithm}[ht]
    \caption{$\mathrm{StochasticGradientDescent}(i,x,\gamma,T_i)$}
    \label{alg:SGD}
\begin{algorithmic}[1]
    \State $x_0^i \leftarrow x \in \mathcal{D}$
    \For{each round $t:= 1,\dots,T_i$}
    \smallskip
            \State Sample $j_t \sim \mathrm{Unif}(\{1,\ldots,i-1\})$
            \smallskip
            \State $x_i^t \leftarrow \Pi_{\mathcal{D}}\left( x_i^{t-1} - \gamma \nabla f_{j_t}(x_i^{t-1})/t\right)$
            \smallskip
    \EndFor
    \smallskip
    \State \textbf{return} $\hat{x}_i \leftarrow \sum_{t=1}^{T_i} x_i^t / T_i$
    \end{algorithmic}
\end{algorithm}

$\mathrm{SGD}$-$\mathrm{sparse}$ is presented in Algorithm~\ref{alg:sparseSGD}. Algorithm~\ref{alg:sparseSGD} tracks a sparse sub-sequence of stages $i \in [n]$ ($\prev \cdot (1 + \alpha) < i$) at which the returned point $\hat{x}_i$ is produced by running $\mathrm{SGD}$ (Algorithm~\ref{alg:SGD}) for $T_i$ iterations. In the remaining epochs Algorithm~\ref{alg:sparseSGD} just returns the solution of the last stage at which $\mathrm{SGD}$ was used ( $\hat{x}_i \leftarrow \hat{x}_{\prev}$, Step~$8$ of Algorithm~\ref{alg:sparseSGD}). Notice that the sparsity of the SGD-call (Step~$4$) is controlled by the selection of the parameter $\alpha \geq 0$. For example for $\alpha = 0$, Step~$4$ is reached at each stage $i \in [n]$ while for large values of $\alpha \geq 0$, Step~$4$ might never be reached.   

\begin{algorithm}[ht]
    \caption{$\mathrm{StochasticGradientDescent}$-$\mathrm{sparse}$}
    \label{alg:sparseSGD}
\begin{algorithmic}[1]
 \State $\hat{x}_0 \in \mathcal{D}$ and $\prev\leftarrow 0$
    \smallskip
    \For{each stage $i:= 1,\dots,n$}
        \smallskip
        \If{$\prev\cdot(1+\alpha)<i$}
            \smallskip
            \State $\hat{x}_i \leftarrow \mathrm{StochasticGradientDescent}(i,\hat{x}_{i-1}, \gamma, T_i)$~~~\textcolor{blue}{$\#$\textit{ output $\hat{x}_i \in \mathcal{D}$ for stage $i$}}
            \smallskip
            \State $\hat{x}_{\prev} \leftarrow \hat{x}_i$
        \smallskip
        \State $\prev \leftarrow i $
        \Else
            \smallskip
            \State $\hat{x}_i \leftarrow \hat{x}_{\prev}$~~~\textcolor{blue}{$\#$\textit{ output $\hat{x}_i \in \mathcal{D}$ for stage $i$}}
            \smallskip
        \EndIf
    \EndFor
    \end{algorithmic}
\end{algorithm}
In the rest of the section we establish the formal guarantees of $\mathrm{SGD}$ that are formally stated in Algorithm~\ref{alg:sparseSGD}. We start by presented the well-known guarantees of $\mathrm{SGD}$. 
\begin{theorem}[\cite{H16}]\label{t:SGD}
Let $\hat{x}_i := \mathrm{StochasticGradientDescent}(i,\hat{x}_{i-1},\gamma,T_i)$ where $\gamma = 1/\mu$, $T_i = \mathcal{O}\left(\frac{G^2}{\epsilon \mu}\right)$ and $G = \max_{x \in \mathcal{D}}\norm{\nabla f(x)}$. Then, $\mathrm{E}\left[ g_i(\hat{x}_i) - g_i(x_i^\star) \right] \leq \epsilon$.
\end{theorem}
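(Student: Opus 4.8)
The plan is to run the classical analysis of projected stochastic gradient descent on a strongly convex objective, applied here to the prefix function $g_i$. First I would record the three ingredients we need: $g_i$ is $\mu$-strongly convex (being an average of $\mu$-strongly convex functions); the stochastic gradient $\nabla f_{j_t}(x)$ used in Algorithm~\ref{alg:SGD} is an unbiased estimate of $\nabla g_i(x)$ with $\norm{\nabla f_{j_t}(x)}_2 \le G$ on $\mathcal{D}$, so its second moment is at most $G^2$; and the effective step size at round $t$ is $\gamma_t := \gamma/t = 1/(\mu t)$ by the choice $\gamma = 1/\mu$.

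The core is then a one-step recursion. Writing $x^\star := x_i^\star \in \mathcal{D}$ and using that $\Pi_{\mathcal{D}}$ is nonexpansive together with $x^\star \in \mathcal{D}$,
\[
\norm{x_i^t - x^\star}_2^2 \le \norm{x_i^{t-1} - x^\star}_2^2 - 2\gamma_t \langle \nabla f_{j_t}(x_i^{t-1}),\, x_i^{t-1} - x^\star\rangle + \gamma_t^2 G^2 .
\]
Taking expectations, using unbiasedness, and invoking strong convexity in the form $\langle \nabla g_i(x_i^{t-1}), x_i^{t-1} - x^\star\rangle \ge g_i(x_i^{t-1}) - g_i(x^\star) + \tfrac{\mu}{2}\norm{x_i^{t-1} - x^\star}_2^2$, I obtain
\[
2\gamma_t\, \bE\!\left[g_i(x_i^{t-1}) - g_i(x^\star)\right] \le (1-\mu\gamma_t)\,\bE\norm{x_i^{t-1}-x^\star}_2^2 - \bE\norm{x_i^t - x^\star}_2^2 + \gamma_t^2 G^2 .
\]
Substituting $\gamma_t = 1/(\mu t)$ and multiplying through by $\mu t/2$, the coefficient of $\bE\norm{x_i^{t-1}-x^\star}_2^2$ becomes $\tfrac{\mu(t-1)}{2}$ and that of $\bE\norm{x_i^t - x^\star}_2^2$ becomes $\tfrac{\mu t}{2}$, so the iterate terms telescope when summed over $t = 1,\dots,T_i$, leaving the nonpositive boundary term $-\tfrac{\mu T_i}{2}\bE\norm{x_i^{T_i}-x^\star}_2^2$ and the accumulated noise $\sum_{t=1}^{T_i}\tfrac{G^2}{2\mu t} = O\!\big(G^2\log T_i/\mu\big)$.

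Finally I would convert the bound on the iterates into a bound on the output: convexity of $g_i$ and Jensen's inequality give $g_i(\hat x_i) - g_i(x^\star) \le \tfrac{1}{T_i}\sum_{t=1}^{T_i}\big(g_i(x_i^{t-1}) - g_i(x^\star)\big)$, which is $O\!\big(G^2\log T_i/(\mu T_i)\big)$, so choosing $T_i$ of order $G^2/(\mu\epsilon)$ makes the right-hand side at most $\epsilon$. The main thing to be careful about is precisely this last averaging step: matching the stated $\mathcal{O}(G^2/(\mu\epsilon))$ count without the spurious logarithm requires replacing the uniform average by the weight-$t$ average together with the standard $\gamma_t = 2/(\mu(t+1))$ schedule. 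A minor bookkeeping point is that the sampler in Algorithm~\ref{alg:SGD} draws $j_t$ from $\{1,\dots,i-1\}$ rather than $\{1,\dots,i\}$, so strictly the estimator is unbiased for $\nabla g_{i-1}$; one passes from $g_{i-1}$ to $g_i$ using $g_i = \tfrac{i-1}{i}g_{i-1} + \tfrac1i f_i$, the $G$-Lipschitz bound on $f_i$, and the closeness $\norm{x_{i-1}^\star - x_i^\star}_2 = O(G/(\mu i))$ from Lemma~\ref{lemm:optdist}, which only perturbs the guarantee by lower-order terms.
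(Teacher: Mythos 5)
The paper does not prove this theorem; it imports it verbatim from the cited source \cite{H16}, so there is no in-paper argument to compare against. Your proposal is the standard one-step-recursion-plus-telescoping analysis of projected SGD for strongly convex objectives, which is precisely the argument in \cite{H16}, and the algebra you give (nonexpansiveness of $\Pi_{\mathcal D}$, the strong-convexity lower bound on the inner product, multiplying by $\mu t/2$ so that the $\norm{x_i^{t-1}-x^\star}_2^2$ and $\norm{x_i^t-x^\star}_2^2$ coefficients telescope) is correct.

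Your two side remarks are genuinely useful observations rather than gaps in your proof. First, you are right that with $\gamma_t = 1/(\mu t)$ and the \emph{uniform} average $\hat{x}_i = \frac{1}{T_i}\sum_t x_i^t$ as literally written in Algorithm~\ref{alg:SGD}, the argument yields $g_i(\hat x_i)-g_i(x_i^\star) = \mathcal{O}(G^2\log T_i/(\mu T_i))$, so $T_i=\mathcal{O}(G^2/(\mu\epsilon))$ only delivers $\epsilon$-accuracy up to a logarithmic factor; matching the stated constant-in-$\log$ rate requires either the weighted average with the $\gamma_t = 2/(\mu(t+1))$ schedule as you note, or a suffix average. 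Since the paper's headline complexities already carry $\log n$ factors and are stated in $\mathcal{O}$ notation, this does not affect anything downstream, but you are correct to flag it. Second, you correctly notice that Algorithm~\ref{alg:SGD} samples $j_t\sim\mathrm{Unif}(\{1,\dots,i-1\})$, so the estimator is unbiased for $\nabla g_{i-1}$ rather than $\nabla g_i$; your repair via $g_i=\frac{i-1}{i}g_{i-1}+\frac1i f_i$, the $G$-Lipschitz bound, and Lemma~\ref{lemm:optdist} is the natural one and only contributes lower-order terms. In short: the proof is correct, it is the same route as the cited source, and the caveats you raise are real (presumably minor typos in the paper's statement of Algorithm~\ref{alg:SGD}) rather than defects in your argument.
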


\begin{theorem}
Let a convex and compact set $\mathcal{D}$ and a sequence of $\mu$-strongly convex functions $f_1,\ldots,f_n$ with $f_i:\mathcal{D} \mapsto \mathbb{R}$. Then Algorithm~\ref{alg:sparseSGD}, with $T_i = \mathcal{O}\left(\frac{G^2}{\epsilon \mu}\right)$, $\alpha = \frac{1}{2|\mathcal{D}|G}$ and $\gamma = 1/\mu$, guarantees
\[\mathrm{E}\left[ g_i(\hat{x}_i) - g_i(x_i^\star) \right] \leq \epsilon \text{ for each stage } i \in [n] \]
The FO complexity of Algorithm~\ref{alg:sparseSGD} is $\mathcal{O}\left(\frac{|\mathcal{D}|G^3 \log n}{\epsilon^2 \mu}\right)$.
\end{theorem}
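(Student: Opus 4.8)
The plan is to establish the accuracy bound and the FO count separately, and to handle accuracy by distinguishing the two kinds of stage that Algorithm~\ref{alg:sparseSGD} produces: an \emph{update stage}, where $\prev(1+\alpha)<i$ and $\hat{x}_i$ is the output of a fresh call to $\mathrm{StochasticGradientDescent}(i,\hat{x}_{i-1},\gamma,T_i)$, and a \emph{copy stage}, where $\hat{x}_i=\hat{x}_{\prev}$ for the most recent update stage $\prev$. No induction across stages is needed, since the guarantee of Theorem~\ref{t:SGD} does not depend on the starting iterate: at an update stage $i$, taking $T_i=\mathcal{O}(G^2/(\epsilon\mu))$ (with the hidden constant chosen so the bound reads $\epsilon/2$) gives $\mathbb{E}[g_i(\hat{x}_i)-g_i(x_i^\star)]\le\epsilon/2\le\epsilon$ at once.

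The real work is the copy stages. There $\hat{x}_i=\hat{x}_{\prev}$ with $\prev$ an update stage, and because the test ``$\prev(1+\alpha)<i$'' failed we have $i-\prev\le\alpha\prev\le\alpha i$. Writing $g_i=\tfrac{\prev}{i}\,g_{\prev}+\tfrac{1}{i}\sum_{j=\prev+1}^{i}f_j$, I would split
\[
g_i(\hat{x}_{\prev})-g_i(x_i^\star)=\tfrac{\prev}{i}\bigl(g_{\prev}(\hat{x}_{\prev})-g_{\prev}(x_i^\star)\bigr)+\tfrac{1}{i}\sum_{j=\prev+1}^{i}\bigl(f_j(\hat{x}_{\prev})-f_j(x_i^\star)\bigr).
\]
For the first term, optimality of $x_{\prev}^\star$ for $g_{\prev}$ together with $\prev/i\le 1$ yield a bound by $g_{\prev}(\hat{x}_{\prev})-g_{\prev}(x_{\prev}^\star)$, whose expectation is $\le\epsilon/2$ by the update-stage guarantee at stage $\prev$ (the same solve serves every downstream copy stage). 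For the second term, $G$-Lipschitzness of each $f_j$ on the diameter-$|\mathcal{D}|$ set $\mathcal{D}$ gives $|f_j(\hat{x}_{\prev})-f_j(x_i^\star)|\le G|\mathcal{D}|$, and there are at most $i-\prev\le\alpha i$ such summands, so this term is at most $\alpha G|\mathcal{D}|$. Choosing $\alpha$ of order $\epsilon/(G|\mathcal{D}|)$ — essentially the value in the statement, with the $\epsilon$-scaling made explicit — makes the second term $\le\epsilon/2$, and the two pieces combine to $\epsilon$.

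For the FO complexity, copy stages cost nothing, so only the update stages count, each at $T_i=\mathcal{O}(G^2/(\epsilon\mu))$ gradient evaluations. To bound their number, let $z_1<z_2<\cdots$ be the update stages; failure of the test forces $z_{\ell+1}>(1+\alpha)z_\ell$ once $z_\ell\ge 1/\alpha$, so after an initial block of at most $\lceil 1/\alpha\rceil$ consecutive update stages the sequence grows geometrically, giving $\mathcal{O}(1/\alpha+\log n/\log(1+\alpha))=\mathcal{O}(\log n/\alpha)$ update stages in total (via $\log(1+\alpha)\ge\alpha/2$ for $\alpha\le 1$, as in the proof of Lemma~\ref{l:sparse}). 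Hence the FO count is $\mathcal{O}\!\bigl(\tfrac{\log n}{\alpha}\cdot\tfrac{G^2}{\epsilon\mu}\bigr)$, and substituting $\alpha=\Theta(\epsilon/(G|\mathcal{D}|))$ gives $\mathcal{O}(|\mathcal{D}|G^3\log n/(\epsilon^2\mu))$.

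I expect the one genuine obstacle to be the copy-stage estimate — specifically, recognizing that it suffices to control the ``extrapolation error'' of using $g_i$ in place of $g_{\prev}$ by the \emph{fraction} $(i-\prev)/i\le\alpha$ of newly added summands times the crude per-summand bound $G|\mathcal{D}|$ (rather than routing through iterate distances, which would cost a square root and force a much smaller SGD target), and then calibrating $\alpha$ against $\epsilon$ so that both this error and the SGD error are $\mathcal{O}(\epsilon)$ while keeping $T_i/\alpha$ at the claimed order. Everything else reduces to direct invocations of Theorem~\ref{t:SGD} and the geometric-counting argument underlying Lemma~\ref{l:sparse}.
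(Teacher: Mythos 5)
Your proof matches the paper's argument step for step: update stages invoke Theorem~\ref{t:SGD} directly for an $\epsilon/2$ bound, copy stages use the exact same decomposition $g_i=\tfrac{\prev}{i}g_{\prev}+\tfrac{1}{i}\sum_{j>\prev}f_j$ with optimality of $x_{\prev}^\star$ on the first piece and the per-summand Lipschitz/diameter bound $G|\mathcal{D}|$ scaled by the fraction $(i-\prev)/i\le\alpha$ on the second, and the FO count comes from the same geometric-growth argument on the update stages. You also correctly flag that the theorem statement's $\alpha=\tfrac{1}{2|\mathcal{D}|G}$ is a typo for $\alpha=\tfrac{\epsilon}{2|\mathcal{D}|G}$, which is what the paper's own proof and FO-counting computation actually use.
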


\begin{proof}
In case $i = \prev$ then by selecting $T = \mathcal{O}\left(\frac{G^2}{\mu \epsilon}\right)$ and $\gamma = 1\ / \mu$, Theorem~\ref{t:SGD} implies that $\bE\left[g_i(\hat{x}_{i})\right]-g_i(x^\star_i) \leq \epsilon / 2$. Up next we establish the claim for $\prev < i \leq (1+\alpha ) \cdot \prev$. 
\begin{eqnarray*}
\bE\left[g_i(\hat{x}_{i})\right]-g_i(x^\star_i) &=& \bE\left[g_i(\hat{x}_{\prev})\right]-g_i(x^\star_i) ~~~(\hat{x}_i = \hat{x}_{\prev},~\text{Step }8 \text{ of Algorithm~\ref{alg:sparseSGD}})\\
&=& \bE\left[\frac{1}{i}\sum_{j=1}^i f_j(\hat{x}_{\prev})\right]-\frac{1}{i}\sum_{j=1}^if_j(x^\star_i)\\
&=& \frac{\prev}{i} \cdot \bE\left[g_{\prev}(\hat{x}_{\prev})\right] + \frac{1}{i}\sum_{j=\prev+1}^i\bE\left[f_j(\hat{x}_{\prev})\right] - \frac{\prev}{i} \cdot g_{\prev}(x^\star_i) - \frac{1}{i}\sum_{j=\prev+1}^i f_j(x^\star_i)\\
&=& \frac{\prev}{i} \cdot \left(\bE\left[g_{\prev}(\hat{x}_{\prev})\right]-g_{\prev}(x^\star_i)\right) + \frac{1}{i}\sum_{j=\prev+1}^j\left(\bE\left[f_j(\hat{x}_{\prev})\right]-f_j(x^\star_i)\right)\\
&\leq& \frac{\prev}{i} \cdot \left(\bE\left[g_{\prev}(\hat{x}_{\prev})\right]-g_{\prev}(x^\star_i)\right) + \frac{i-\prev}{i} \cdot |\mathcal{D}|G\\
&=& \frac{\prev}{i}\left(\bE\left[g_{\prev}(\hat{x}_{\prev})\right]-g_{\prev}(x^\star_{\prev})+\underbrace{g_{\prev}(x^\star_{\prev})-g_{\prev}(x^\star_i)}_{\leq 0}\right) + |\mathcal{D}|G \cdot \frac{i-\prev}{\prev}\\
        &\leq& \frac{\prev}{i}\left(\bE\left[g_{\prev}(\hat{x}_{\prev})\right]-g_{\prev}(x^\star_{\prev})\right) + |\mathcal{D}|G\cdot \frac{i-\prev}{\prev}\\
        &\leq& \frac{\epsilon}{2} + |\mathcal{D}|G\cdot \frac{i-\prev}{\prev}\\
        &\leq& \frac{\epsilon}{2} + |\mathcal{D}|G\frac{(1+\alpha)\prev-\prev}{\prev}\\
        &=& \frac{\epsilon}{2} + |\mathcal{D}|G \alpha = \epsilon
\end{eqnarray*}
To this end we have established that $\bE\left[g_{i}(\hat{x}_{i})\right]-g_{i}(x^\star_i) \leq \epsilon$ for each stage $i \in [n]$. We conclude the proof by upper bounding the overall FO complexity of Algorithm~\ref{alg:sparseSGD}.

An execution of Stochastic Gradient Decent at Step~4 of Algorithm~\ref{alg:sparseSGD} that calculates an $\epsilon/2$ optimal solution requires $\cO\left(G^2/(\mu\epsilon)\right)$ FOs. Let us now upper bound the required executions of SGD. Let $k$ denotes number of times Algorithm~\ref{alg:sparseSGD} reached Step~$4$ over the course of $n$ stages. Due Step~5 of Algorithm~\ref{alg:sparseSGD},
\[ \left(1+\frac{\epsilon}{2|\mathcal{D}|G}\right)^k\leq n \Rightarrow k \leq    \frac{\ln n}{\ln (1+\frac{\epsilon}{2|\mathcal{D}|G})} \leq \frac{\ln n}{1-\frac{1}{1+\frac{\epsilon}{2|\mathcal{D}|G}}} \leq 4|\mathcal{D}|G\frac{\log n}{\epsilon}\]
Thus the overall FO complexity of Algorithm~\ref{alg:sparseSGD} is $\mathcal{O}\left( \frac{G^3 |\mathcal{D}| \log n}{\mu \epsilon^2} \right)$
\end{proof}
\clearpage

\label{app:lowaccuracy}

\section{Experimental Details}
In this section we present additional experimental evaluations in LIBSVM datasets. 

\begin{table}[h!]
\centering
\begin{tabular}{cc}
\includegraphics[width=0.3\textwidth]{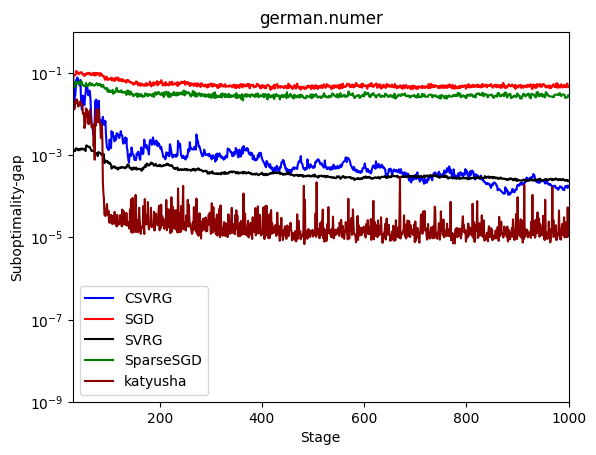} & \includegraphics[width=0.3\textwidth]{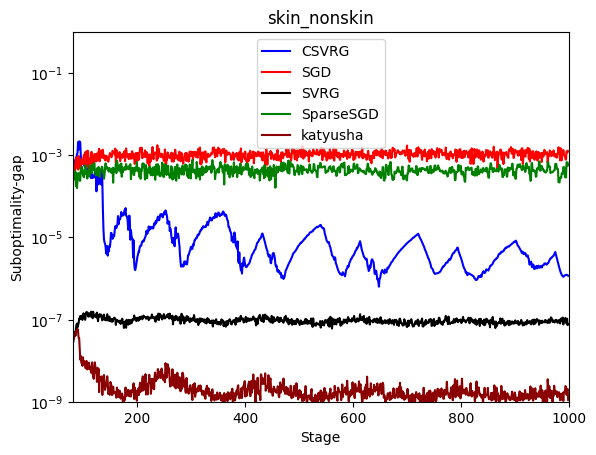}\\ \includegraphics[width=0.3\textwidth]{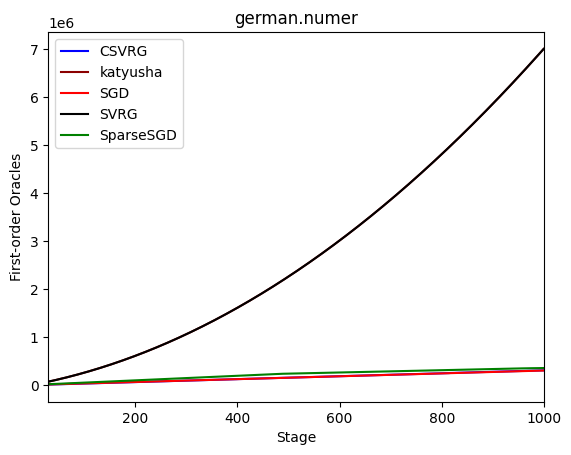} & \includegraphics[width=0.3\textwidth]{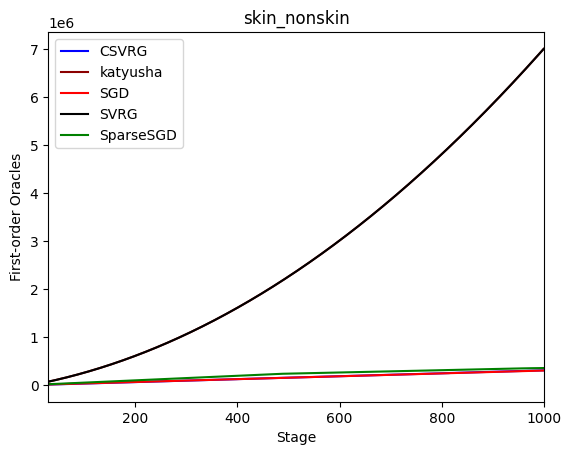}
\end{tabular}
\caption{Optimality gap as the stages progress on a ridge regression problem (averaged over $10$ independent runs). $\texttt{CSVRG}$ performs the exact same number of FOs with $\texttt{SGD}$ and slightly less than $\texttt{SGD}$-$\texttt{sparse}$. $\texttt{Katyusha}$ and $\texttt{SVRG}$ perform the exact same number of FOs. $\texttt{CSVRG}/\texttt{SGD}/\texttt{SGD}$-$\texttt{sparse}$ perform roughly $4\%$ of the FOs of $\texttt{Katyusha}/\texttt{SVRG}$.}
\label{tab:error}
\end{table}
In Table~4 and~5 we present the parameters of the various method used in our experimental evaluations. We remind that $\lambda = 10^{-3}$.
\begin{table}[!ht]
    \begin{center}
    \begin{tabular}{|c||p{35mm}|p{35mm}|p{35mm}|}
        \hline
        Method & \textbf{breast cancer} & \textbf{cod-rna} & \textbf{diabetes}\\
        \hline
        \texttt{SGD} & step size: $(t\lambda)^{-1}$ \newline  $T$: 300  &  step size: $(t\lambda)^{-1}$ \newline $T$: 300&step size: $(t\lambda)^{-1}$ \newline $T$: 300 \\
        \hline
         \texttt{SGD}-\texttt{sparse} & step size: $(t\lambda)^{-1}$\newline $T$: 414 \newline $\alpha:0.002$
          & step size: $(t\lambda)^{-1}$\newline $T$: 480 \newline $\alpha:0.002$ & step size: $(t\lambda)^{-1}$\newline $T$: 480 \newline $\alpha:0.002$\\
        \hline
        \texttt{Katyusha} & step size: $1/(3L)$ \newpage Outer Iterations: 10 \newpage Inner Iterations: 100 \newpage $L:~0.0522 $  & step size: $1/(3L)$ \newpage Outer Iterations: 10 \newpage Inner Iterations: 100 \newpage $L:~0.015$ & step size: $1/(3L)$ \newpage Outer Iterations: 10 \newpage Inner Iterations: 100 \newpage $L:~0.01262$\\
        \hline
                \texttt{SVRG} & step size: $1/(3L)$ \newpage Outer Iterations: 10 \newpage Inner Iterations: 100 \newpage $L:~0.0522$  & step size: $1/(3L)$ \newpage Outer Iterations: 10 \newpage Inner Iterations: 100 \newpage $L:~0.015$ & step size: $1/(3L)$ \newpage Outer Iterations: 10 \newpage Inner Iterations: 100 \newpage $L:~0.01262$\\
        \hline
        \texttt{CSVRG} & step size at $i$: $(it\lambda)^{-1}$ \newpage $T_i = 100$ \newpage$\alpha = 0.3$  & step size at $i$: $(it\lambda)^{-1}$ \newpage $T_i = 100$ \newpage$\alpha = 0.3$ & step size at $i$: $(it\lambda)^{-1}$ \newpage $T_i = 100$ \newpage$\alpha = 0.3$ \\
        \hline
    \end{tabular}
        \caption{Parameters used in our experiments}
    \end{center}
    \label{tab:params1}
\end{table}

\begin{table}[t]
    \begin{center}
    \begin{tabular}{|c||p{35mm}|p{35mm}|p{35mm}|}
        \hline
        Method & \textbf{german.numer} & \textbf{skin-nonskin}\\
        \hline
        \texttt{SGD} & step size: $(t\lambda)^{-1}$ \newline  $T$: 300  &  step size: $(t\lambda)^{-1}$ \newline $T$: 300\\
        \hline
         \texttt{SGD}-\texttt{sparse} & step size: $(t\lambda)^{-1}$\newline $T$: 414 \newline $\alpha:0.002$
          & step size: $(t\lambda)^{-1}$\newline $T$: 480 \newline $\alpha:0.002$\\
        \hline
        \texttt{Katyusha} & step size: $1/(3L)$ \newpage Outer Iterations: 10 \newpage Inner Iterations: 100 \newpage $L:~0.0317$  & step size: $1/(3L)$ \newpage Outer Iterations: 10 \newpage Inner Iterations: 100 \newpage $L:~0.068$\\
        \hline
                \texttt{SVRG} & step size: $1/(3L)$ \newpage Outer Iterations: 10 \newpage Inner Iterations: 100 \newpage $L:~0.0317$  & step size: $1/(3L)$ \newpage Outer Iterations: 10 \newpage Inner Iterations: 100 \newpage $L:~0.068$\\
        \hline
        \texttt{CSVRG} & step size at $i$: $(it\lambda)^{-1}$ \newpage $T_i = 100$ \newpage$\alpha = 0.3$  & step size at $i$: $(it\lambda)^{-1}$ \newpage $T_i = 100$ \newpage$\alpha = 0.3$\\
        \hline
    \end{tabular}
    \caption{Parameters used in our experiments}
    \end{center}
    \label{tab:params2}
\end{table}

\clearpage

\section{Experimental Evaluations with Neural Networks}
In this section we include additional experimental evaluations \texttt{CSVRG},\texttt{SGD} and \texttt{SVRG} for the continual finite-sum setting in the context of $2$-Layer Neural Networks and the MNIST dataset. More precisely, we experiment with Neural Networks with an input layer of size $784$ (corresponds to the size of the inputs from the MNIST dataset), a hidden layer with $1000$ nodes and an ReLU activation function and an output layer with 10 nodes corresponding to the $10$ MNIST classes. As a training loss we use the cross-entropy between the model's output and the one-hot encoding of the classes. We consider two different constructions of the data streams that are respectively presented in Section~\ref{s:nn1} and Section~\ref{s:nn2}. For both settings the parameters used for the various methods are presented in Table~\ref{tab:mnist_experiment}. We also remark that the parameters of \texttt{CSVRG}, \texttt{SGD} were appropriately selected so as to perform roughly the same number of FOs.
\begin{table}[h!]
    \begin{center}
    \begin{tabular}{|c||p{35mm}|}
        \hline
        Method & \textbf{MNIST}\\
        \hline
        \texttt{SGD} & step size: $0.001$ \newline  $T$: 1385 \\
        \hline
                \texttt{SVRG} & step size: $0.05$ \newpage Outer Iterations: 7 \newpage Inner Iterations: 40 \\
        \hline
        \texttt{CSVRG} & step size at $i$: $0.001$ \newpage $T_i = 600$ \newpage$\alpha = 0.01$\\
        \hline
    \end{tabular}
    \caption{Parameters used for training of the neural network.}
    \end{center}
    \label{tab:mnist_experiment}
\end{table}

\subsection{Data Streams produced with the Stationary Distribution}\label{s:nn1}

In this experiment we create a stream of $3000$ data points where the data-point at iteration $i$ were sampled uniformly at random (without replacement) from the MNIST dataset. Table~\ref{tab:MNIST} illustrates the training loss of each different method across the different stages. As Table~\ref{tab:MNIST} reveals that in the early stages all methods achieve training loss close to $0$ by overfitting the model while in the latter stages \texttt{CSVRG} and \texttt{SVRG} are able to achieve much smaller loss in comparison to \texttt{SGD}. We remark that \texttt{CSVRG} is able to meet the latter goal with way fewer FO calls than \texttt{SVRG} as demonstrated in Table~\ref{tab:MNIST}.

\begin{table}[!h]
    \centering
    \begin{tabular}{cc}
       \includegraphics[width=0.4\textwidth]{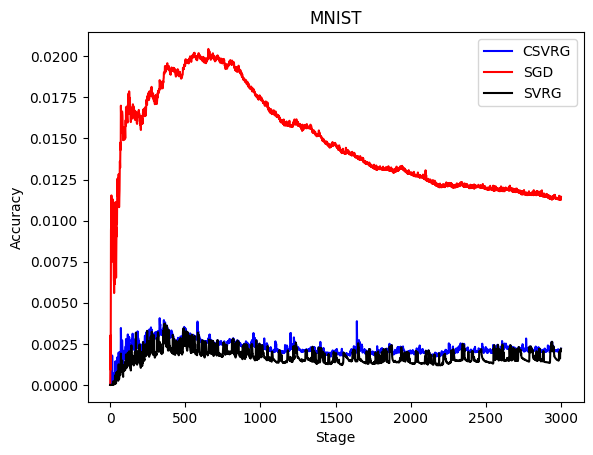}  &
       \includegraphics[width = 0.4\textwidth]{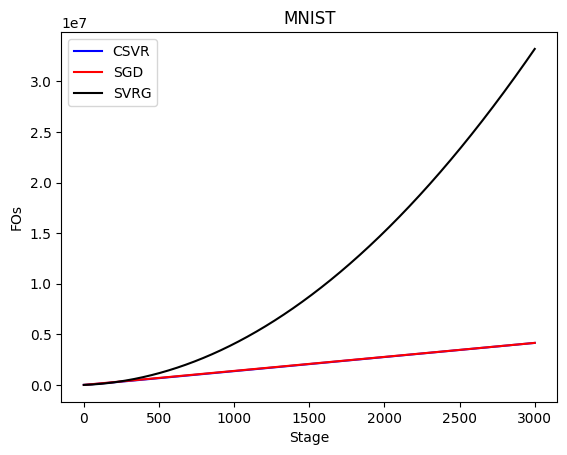}
    \end{tabular}
    \caption{Training loss as the stages progress on a neural network training for the MNIST dataset for the setting of sampling without replacement.}
    \label{tab:MNIST}
\end{table}

\subsection{Data Streams with Ascending Labels}\label{s:nn2}
Motivated by continual learning at which new tasks appear in a streaming fashion, we experiment with a sequence of MNIST data points at which \textit{new digits} gradually appear. More precisely, we construct the stream of data points as follows:
\begin{enumerate}
    \item For each of the classes $i \in \{0,9\}$, we randomly sample $300$ data points.
    \item For the first $600$ stages we randomly mix $0/1$ data points.
    \item The stages $\{601 + (i-2)*300, 600 + (i-1)*300\}$ for $i \in \{2,9\}$ contain the data points of category $i$ (e.g. category $2$ appears in stages $\{601,900\}$).
\end{enumerate}

Table~\ref{tab:error1} illustrates the training loss of \texttt{CSVRG} and \texttt{SGD} for the various stages of the above data-stream. As Table~\ref{tab:error1} reveals, \texttt{CSVRG} is able to achieve significantly smaller loss than \texttt{SGD} with the same number of FOs. We also remark that that the bumps appearing in bot curves corresponds to the stages at which a \textit{new digit} is introduced.

\begin{table}[h!]
\centering
\begin{tabular}{cc}
\includegraphics[width=0.4\textwidth]{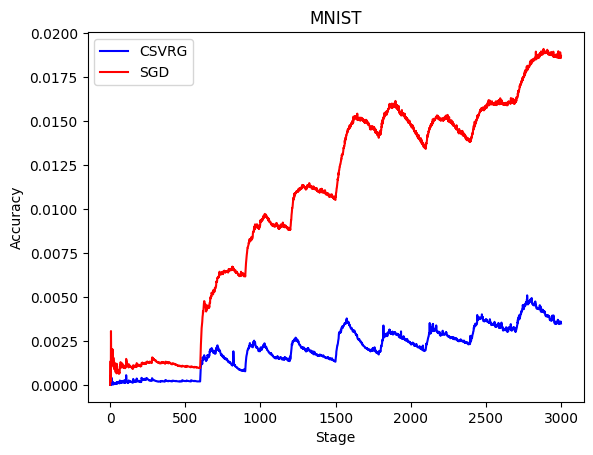} & 
\end{tabular}
\caption{The training loss of \texttt{CSVRG} and \texttt{SGD} for the various stages.}
\label{tab:error1}
\end{table}

In Table~\ref{tab:error2} we present the classification accuracy of the models respectively produced by \texttt{CSVRG} and \texttt{SGD} for the various stages. At each stage $i$ we present the accuracy of the produced model according to the categories revealed so far. The sudden drops in the accuracy occur again at the stages where new digits are introduced. For example in iteration $601$, the accuracy drops from roughly $1$ to roughly $0.667$ due to the fact in stage $600$ the accuracy is measured with respect to the $0/1$ classification task while in stage $601$ the accuracy is measured with respect to the $0/1/2$ classification task. At stage $601$ both models misclassifies all the $2$ examples and thus the accuracy drops to $2/3$. In the left plot, we plot the classification accuracy with respect to the data used in the training set while in the right one we plot the classification accuracy on unseen test data. Both plots reveal that \texttt{CSVRG} admits a noticable advantage of \texttt{SGD}.

\begin{table}[h!]
\centering
\begin{tabular}{cc}
\includegraphics[width=0.4\textwidth]{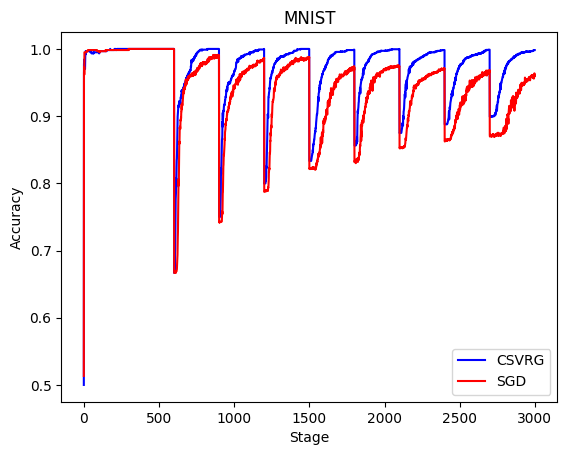}
\includegraphics[width=0.4\textwidth]{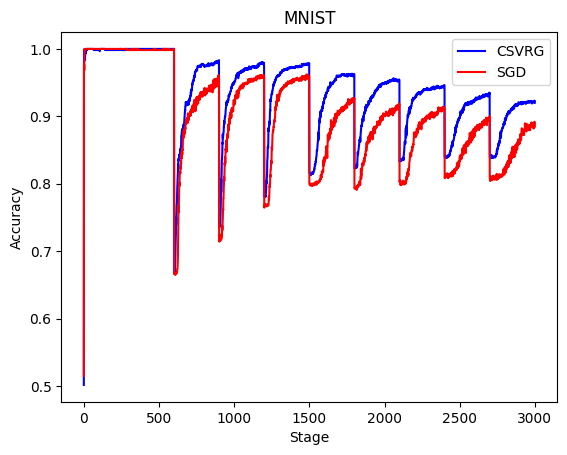} 
\end{tabular}
\caption{Classification accuracy. For the train set on the left and the test set on the right. As a comparison between the two algorithms.}
\label{tab:error2}
\end{table}

\label{app:expdetails}

\end{document}